\theoremstyle{plain}
\newtheorem{thm}{Theorem}
\newtheorem{lem}[thm]{Lemma}
\theoremstyle{definition}
\newtheorem{rem}[thm]{Remark}
\newtheorem{defn}[thm]{Definition}
 \numberwithin{equation}{section}
 \numberwithin{thm}{section}
\DeclareMathOperator{\Hom}{Hom}
\DeclareMathOperator{\tr}{tr}
\newcommand\be            {\begin{equation}}
\newcommand\ee            {\end{equation}}
\newcommand\nxt{\noindent\raisebox{.08em}{\rule{.44em}{.44em}}\hspace{.4em}}
\newcommand\dwt[1] {\underline{\smash #1}}  
\newcommand\bord{\mathrm{Bord}}
\newcommand\Tcw{T^\mathrm{cw}}
\newcommand\cytens[1]{\circlearrowleft_{#1}\!}
\newcommand\cospd[5]{\raisebox{1.2em}{\small \xymatrix@R=0.8em@C=1.5em{ & #3 \\ #1 \ar[ru]^(.38){#2} && #5 \ar[lu]_(.38){#4}}}}
\newcommand\cospdm[9]{\raisebox{2.2em}{\small \xymatrix@R=0.8em@C=1.5em{ & #3 \ar[dd]^{#9} \\ #1 \ar[ur]^(.35){#2} \ar[dr]_(.35){#6} && #5 \ar[ul]_(.35){#4} \ar[dl]^(.35){#8} \\ & #7 }}}
\newcommand\cospddauxa[2]{#1 \ar[d]_{#2}}
\newcommand\cospddauxb[2]{#1 \ar[u]_{#2}}
\newcommand\cospdd[9]{\raisebox{2.9em}{\small \xymatrix@R=1.5em@C=1.5em{& \cospddauxa#3 \\ #1 \ar@/^4pt/[ur]^(.35){#2} \ar@/_4pt/[dr]_(.35){#6} & #9 & #5 \ar@/_4pt/[ul]_(.35){#4} \ar@/^4pt/[dl]^(.35){#8} \\ & \cospddauxb#7 }}}
\newcommand\vect{\mathcal{V}\hspace{-.5pt}ect}
\newcommand{\cosp}{\mathcal{C}\hspace{-1pt}osp}
\newcommand{\tdiag}{\mathcal{D}\hspace{-.5pt}iag}
\newcommand{\alg}{\mathcal{A}\hspace{.2pt}lg}
\newcommand{\Mor}{\mathcal{M}or}
\newcommand{\Alg}{\mathbf{Alg}}				
\newcommand{\Cosp}{\mathbf{Cosp}}				
\newcommand{\CAlg}{\mathbf{C\hspace{-1pt}Alg}}		
\newcommand{\CALG}{\mathrm{C\hspace{-.8pt}A\hspace{-.3pt}L\hspace{-.4pt}G}}		
\newcommand{\Cospu}{{\underline{\mathbf{Cos\phantom{a}}}}\hspace*{-0.5em}\mathbf{p}}
\newcommand{\CALGu}{{\underline\CALG}}		
\newcommand\eps           {\varepsilon}
\newcommand\id            {{\rm id}}
\newcommand\one           {{\bf1}}
\newcommand\Cb            {\mathbb{C}}
\newcommand\Rb            {\mathbb{R}}
\newcommand\Zb            {\mathbb{Z}}
\newcommand\bfI            {\mathbf{I}}
\newcommand\bfB            {\mathbf{B}}
\newcommand\bfD            {\mathbf{D}}
\newcommand\bfE            {\mathbf{E}}
\newcommand\bfZ            {\mathbf{Z}}
\begin{document}

\thispagestyle{empty}
\def\thefootnote{\fnsymbol{footnote}}
\begin{flushright}
ZMP-HH/11-8\\
Hamburger Beitr\"age zur Mathematik 407
\end{flushright}
\vskip 3em
\begin{center}\LARGE
Field theories with defects and the centre functor
\end{center}

\vskip 2em
\begin{center}
{\large 
Alexei Davydov$^{a}$,\, Liang Kong$^{b}$,\,  Ingo Runkel$^{c}$,\, ~\footnote{Emails: {\tt alexei1davydov@gmail.com}, {\tt kong.fan.liang@gmail.com}, {\tt ingo.runkel@uni-hamburg.de}}}
\\[1em]
\it$^a$ 
Department of Mathematics and Statistics\\
University of New Hampshire, Durham NH, USA
\\[1em]
$^b$ Institute for Advanced Study (Science Hall) \\ 
Tsinghua University, Beijing 100084, China
\\[1em]
$^c$ Fachbereich Mathematik, Universit\"at Hamburg\\
Bundesstra\ss e 55, 20146 Hamburg, Germany
\end{center}

\vskip 2em
\begin{center}
  July 2011
\end{center}
\vskip 2em

\begin{abstract}
This note is intended as an introduction to the functorial formulation of quantum field theories with defects. After some remarks about models in general dimension, we restrict ourselves to two dimensions -- the lowest dimension in which interesting field theories with defects exist. 

We study in some detail the simplest example of such a model, namely a topological field theory with defects which we describe via lattice TFT. Finally, we give an application in algebra, where the defect TFT provides us with a functorial definition of the centre of an algebra. This involves changing the target category of commutative algebras into a bicategory. 

Throughout this paper, we emphasise the role of higher categories -- in our case bicategories -- in the description of field theories with defects.
\end{abstract}

\setcounter{footnote}{0}
\def\thefootnote{\arabic{footnote}}

\newpage

\tableofcontents

\section{Introduction}

One way to think about quantum field theory -- motivated by conformal field theory and string theory \cite{Friedan:1986ua,Vafa:1987ea} -- is as functors from bordisms to vector spaces \cite{Segal:1988,Atiyah:1989vu}; here, each of the terms `functor', `bordism', `vector space' has to be supplemented with the appropriate qualifiers for the application in mind. In its most basic form, the bordisms for an $n$-dimensional quantum field theory form a symmetric monoidal category whose objects are $(n{-}1)$ dimensional manifolds equipped with `collars' and whose morphisms are equivalence classes of $n$-dimensional manifolds with parametrised boundary. 

To study quantum field theories beyond this basic functorial definition, it is often appropriate to employ higher categories. There are two natural ways in which such higher categories enter.
\begin{enumerate}
\item
The $(n{-}1)$-manifolds which form the objects in the above bordism category could in turn be obtained by gluing $(n{-}1)$-manifolds along $(n{-}2)$-manifolds, and so on, down to $0$-manifolds, i.e.\ points. This process is called `extending the field theory down to points' \cite{Freed:1994ad,Lawrence:1993,Baez:1995xq,Lurie:2009aa}. One obtains a higher category whose objects are now points (with extra structure) and which has $1$-morphisms, $2$-morphisms, \dots, up to $n$-morphisms. The resulting field theories are most studied in the case of topological field theories \cite{Lurie:2009aa}.
\item
One can let the bordisms remain a (1-)category but equip them with extra structure, namely with `defects'. These are submanifolds embedded in the $(n{-}1)$- and $n$-dimensional bordisms, decorated with labels which describe different possible `defect conditions'. A field theory on bordisms with defects equips the set of defect conditions with the structure of a higher category. 
\end{enumerate}
Here we want to elaborate on the second point. Some other works which also stress the appearance of higher categories in field theories with defects, and which the reader could consult for further references, are \cite{Schweigert:2006af,Lurie:2009aa,Bartels:2009ts,Kapustin:2010ta,Kitaev:2011a}. In the present paper, we will concentrate on the simplest interesting class of models, namely two-dimensional field theories with defects. In section \ref{sec:field-def} we will see how a field theory with a particular type of defects -- so called topological defects -- gives rise to a 2-category defined in terms of the set of defect conditions. In section \ref{sec:ex-latticeTFT} we use lattice topological field theory to construct a very simple but still non-trivial example of a field theory with defects. This example will motivate -- in section \ref{sec:centre} -- a nice mathematical construction, namely a method to make the assignment which maps an algebra to its centre functorial. Section \ref{sec:outlook} contains an outlook on further developments.

These three constructions -- the 2-category of topological defects (section \ref{sec:2-cat-from-defect-QFT}), lattice topological field theory with defects (theorem \ref{thm:lattice-defect-TFT}), and the centre functor (theorem \ref{thm:alg->CAlg-laxfun} and remark \ref{rem:centre-2nd-version}) -- are the main points of this paper. We hope that they provide some intuition on how to work with field theories containing defects and illustrate their usefulness. 

\section{Field theory with defects}\label{sec:field-def}

\subsection{Bordisms with defects}\label{sec:bord-with-def}

It is beyond the scope of this article (and the present abilities of the authors) to develop an all-purpose formalism for field theories with defects. In this subsection we briefly sketch the basic features of the functorial formulation of field theory with defects.\footnote{
In doing so will omit most details. For those who are familiar with the functorial formulation, some of these details are: we should equip our object-$(n{-}1)$-manifolds with collars to ensure a well-defined gluing operation; objects and morphisms could carry extra geometric data such as a metric, a spin structure, etc.; we should work with families to have a natural notion of continuous or smooth dependence of the functor on the bordism; the functor from bordisms to vector spaces is symmetric monoidal; the target category of the functor consists of topological vector spaces with an appropriate tensor product. These issues are treated carefully in \cite{Stolz:unpubl}.}

As usual, a field theory will be a functor from a bordism category to a category of vector spaces. In the presence of defects, the target category of the functor remains unchanged. However, we do modify the source category. The category of $n$-{\em dimensional bordisms with defects} contains the following ingredients. 
\begin{itemize}
\item 
{\em Sets of defect conditions:} 
the bordism category will depend on a choice of $n{+}1$ (possibly empty) sets, $D_k$, $k=0,\dots,n$. The elements of $D_k$ serve as defect conditions for $k$-dimensional defects.
\item 
{\em Objects:} 
the objects are $(n{-}1)$-dimensional compact oriented manifolds $U$ with empty boundary, together with a disjoint decomposition into submanifolds. That is, $U = \bigcup_{i=0}^{n-1} U_i$, where each $U_i$ is an $i$-dimensional oriented submanifold of $U$ and $U_k \cap U_l = \emptyset$ for $k\neq l$.\footnote{\label{fn:defect-decomp}
  We also demand that the partial union $\bigcup_{i=0}^{k} U_i$ is a closed subset of $U$ for $k=0,\dots,n{-}1$; this ensures that $\bar U_k \setminus U_k$ (the difference of $U_k$ and its closure) is contained in the union $\bigcup_{i=0}^{k-1} U_i$ of lower dimensional pieces. Let us give a non-example in $U=S^3$, which we present as the one-point compactification of $\Rb^3$. Take $U_0 = \emptyset$, $U_1 = (-1,1) \times \{(0,0)\}$, $U_2= S^2 \subset \Rb^3$, $U_3 = U \setminus (U_1 \cup U_2)$. In this case, all $U_i$ are submanifolds, but $U_0 \cup U_1$ is not closed, which is not allowed. (But $U_0 \cup U_1 \cup U_2$ is closed). To turn this into an allowed decomposition, take instead $U'_0 = \{ (\pm 1,0,0) \}$ and $U'_2 = S^2 \setminus \{ (\pm 1,0,0) \}$. Then $U_0'$, $U_1$, $U_2'$, $U_3$ is an allowed decomposition of $U$.
} 
The orientation of $U_{n-1}$ is induced by that of $U$.

For example, $U_{n-1}=U$ and $U_k = \emptyset$ for $k<n{-}1$ would be a possibility, or, if $U_{i}$ ($i<n{-}1$) is a closed submanifold of $U$, then we can take $U_{n-1} = U \setminus U_{i}$ and $U_k=\emptyset$ for $k \neq i,n{-}1$. 

Finally, each connected component of $U_k$ is decorated with a defect condition from $D_{k+1}$, i.e.\ we have a collection of maps $d_{k+1} : \pi_0(U_k) \to D_{k+1}$; the reason for the shift in $k$ is that the $U_k$ will appear as boundaries of $(k+1)$-dimensional submanifolds in the $n$-dimensional manifold making up a morphism.
\item 
{\em Morphisms:} a morphism $M : U \to V$ has a structure analogous to objects, except in one dimension higher. In more detail, $M$ is an $n$-dimensional compact oriented manifold, together with a decomposition $M = \bigcup_{i=0}^{n} M_i$, where each $M_i$ is an $i$-dimensional oriented submanifold, possibly with non-empty boundary $\partial M_k$, and $M_k \cap M_l = \emptyset$ for $k\neq l$ (and footnote \ref{fn:defect-decomp} applies analogously). The orientation of $M_n$ is induced by that of $M$. Each connected component of $M_k$ is labelled by a defect condition, but this time from $D_k$, that is, we have maps $\hat d_k : \pi_0(M_k) \to D_k$.

The boundary $\partial M$ is identified via an orientation preserving diffeomorphism (which is part of the data of a morphism) with the disjoint union ${-}U \sqcup V$; we require that $\partial M_k \subset \partial M$, and that the resulting decomposition and labelling of $\partial M$ agrees with the one induced by ${-}U \sqcup V$.
\end{itemize}
For example, in $n=3$ dimensions, a generic morphism would look like a foam, where the interior of each bubble is `coloured' by an element of $D_3$, the walls between two bubbles by elements of $D_2$, lines along which the walls between bubbles meet by elements of  $D_1$, and points where these lines meet by elements of  $D_0$ (somewhat problematic in the foam analogy, but nonetheless allowed, are 1- and 0-dimensional submanifolds not attached to any walls).

While we have given the overall name `defect conditions' to elements of the sets $D_k$, more descriptive names in the various dimensions $0 \le k \le n$ would be that they are conditions for
\begin{itemize}
\item $D_n$: domains (or phases of the field theory)
\item $D_{n-1}$: domain walls (or phase boundaries)
\item $D_{n-2},\dots,D_0$: junctions
\end{itemize}
The sets $D_k$ are equipped with additional structure describing in which geometric configurations the domains can occur. This is complicated in general, but it is easy to state for domain walls. Since the $n$-dimensional manifold underlying the morphism and the $(n-1)$-dimensional submanifold underlying the domain wall are oriented, we can speak of the `left and right side' of the domain wall. Accordingly, there are two maps 
\be
  s,t ~:~ D_{n-1} \longrightarrow D_{n}
\ee
(for `source' and `target'), and a domain wall of type $x \in D_{n-1}$ must have a domain labelled by $s(x)$ on its left and $t(x)$ on its right. This gives a restriction on the allowed maps $d_n, d_{n-1}$ in objects and $\hat d_n, \hat d_{n-1}$ in morphisms. In this work, we will only discuss the cases $n=1$ and $n=2$ and our orientation conventions are shown in figure \ref{fig:or-convention}.

\begin{figure}[tb] 
\begin{center}
\raisebox{25pt}{a)}
\raisebox{-5pt}{\begin{picture}(85,25)
  \put(0,5){\scalebox{0.80}{\includegraphics{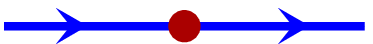}}}
  \put(0,5){
     \setlength{\unitlength}{0.80pt}\put(-0,-0){
     \put(48, 15)   {\scriptsize $+$ }
     \put(49, -10)   {\scriptsize $x$ }
     \put(5, -10)   {\scriptsize $s(x)$ }
     \put(85, -10)   {\scriptsize $t(x)$ }
  }}
\end{picture}}
\hspace{3em}
\raisebox{25pt}{b)}
\raisebox{-5pt}{\begin{picture}(85,25)
  \put(0,5){\scalebox{0.80}{\includegraphics{pic1a.eps}}}
  \put(0,5){
     \setlength{\unitlength}{0.80pt}\put(-0,-0){
     \put(48, 15)   {\scriptsize $-$ }
     \put(49, -10)   {\scriptsize $x$ }
     \put(5, -10)   {\scriptsize $t(x)$ }
     \put(85, -10)   {\scriptsize $s(x)$ }
  }}
\end{picture}}
\hspace{3em}
\raisebox{25pt}{c)}~
\raisebox{-30pt}{\begin{picture}(80,70)
  \put(0,3){\scalebox{1.00}{\includegraphics{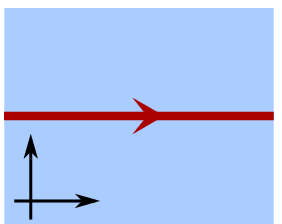}}}
  \put(0,3){
     \setlength{\unitlength}{1.00pt}\put(-261,-384){
     \put(272,403)   {\scriptsize $2$ }
     \put(282,394)   {\scriptsize $1$ }
     \put(290,419)   {\scriptsize $x$ }
     \put(315,428)   {\scriptsize $t(x)$ }
     \put(315,397)   {\scriptsize $s(x)$ }
  }}
\end{picture}}
\end{center}
\vspace*{-1em}
\caption{Figures a)--c) show open subsets of a bordism in dimension $n=1$ and $n=2$. 
They give our orientation convention in the compatibility condition for the assignment of defect conditions in the case $n=1$ (figs.\,a,\,b) and $n=2$ (fig.\,c). The arrows represent positively oriented ordered bases. }
\label{fig:or-convention}
\end{figure}

\subsection{One-dimensional topological field theory with domain walls} \label{sec:1-dim-tft-def}

Before passing to the more interesting two-dimensional situation, let us briefly discuss the simplest one-dimensional field theory with domain walls, namely the case where the field theory is topological. 

We fix two sets $D_1$ and $D_0$, together with two maps $s, t : D_0 \to D_1$. The objects in the bordism category are finite sets of oriented points $U$, together with a map $d_1 : U \to D_1$. The morphisms $M : U \to V$ are (diffeomorphism classes of) 1-dimensional manifolds $M$ with a finite set $W$ of marked points in the interior of $M$. Each connected component of $M \setminus W$ is labelled by an element of $D_1$, and each element of $W$ by an element of $D_0$. On the boundary $\partial M$ the $D_1$-labels have to agree with those of $U$, resp.\ $V$.

A symmetric monoidal functor from this bordism category to (necessarily finite dimensional) $k$-vector spaces for some field $k$ is then determined
\begin{itemize}
\item
{\em on objects:} by a collection of vector spaces $(V_i)_{i \in D_1}$. The value of the functor on a point with orientation `$+$' and label $i$ is given by $V_i$, while a point with orientation `$-$' gets mapped to $V_i^*$. On 0-dimensional manifolds with more than one point the functor is fixed by the monoidal structure as usual.
\item
{\em on morphisms:} by two collections of linear maps $(L_x^+)_{x \in D_0}$ and $(L_x^-)_{x \in D_0}$, where $L_x^+ : V_{s(x)} \to V_{t(x)}$ and $L_x^- : V_{t(x)} \to V_{s(x)}$. Let $\eps  \in \{\pm 1\}$. The map $L^\eps_x$ is the value of the functor on the interval $[-1,1]$ with standard orientation, together with the 0-dimensional submanifold $\{0\}$ with orientation $\eps$ and label $x \in D_0$. If $\eps=+$, the sub-interval $[-1,0)$ is labelled by $s(x) \in D_1$ and $(0,1]$ by $t(x) \in D_1$, while for $\eps=-$, the label of $[-1,0)$ and $(0,1]$ is $t(x)$ and $s(x)$, respectively, as in figure\,\ref{fig:or-convention}\,a,\,b). An arbitrary morphism can be obtained by composing and tensoring the above maps, as well as the cup and cap bordisms, which the functor maps to evaluation and co-evaluation.
\end{itemize}
We can collect this data in a category $\mathcal{D}$, together with a distinguished subset of arrows, as follows. Take $D_1$ as objects of $\mathcal{D}$. As space of morphisms $i \to j$, for $i,j \in D_1$, take $\mathcal{D} := \Hom_k(V_i,V_j)$, the linear maps from $V_i$ to $V_j$. Finally, fix a map $D_0 \times \{ \pm\} \to \mathrm{Mor}(\mathcal{D})$, which assigns to $(x,\pm)$ the arrow $L^\pm_x$, with source and target as described above.

\subsection{Two-dimensional metric bordisms with defects}\label{sec:2d-bord-def}

Let us look in more detail at an instance of a bordism category with defects in two dimensions; the exposition essentially follows \cite[Sec.\,3]{Runkel:2008gr}. A note on convention: by manifold we mean smooth manifold, and by a map between manifolds we mean a smooth map; a finite or countable disjoint union has an ordering of its factors, so that for two sets $A$, $B$ the disjoint unions $A \sqcup B$ and $B \sqcup A$ are isomorphic but not equal.

\subsubsection*{Sets of defect conditions}
We start with the three sets $D_2$, $D_1$, and $D_0$, which are the sets of world sheet phases, domain wall conditions, and junction conditions, respectively. As above we have two maps $s,t : D_1 \to D_2$ giving the phase to the left and right of a domain wall; our orientation conventions are shown in figure \ref{fig:or-convention}\,c). For a junction in $D_0$ we need to specify which domain walls can meet with which orientations at a junction point. 

The combinatorial description thereof is a bit lengthy: Let $D_1^{(n)}$ be the set of tuples of $n$ {\em cyclically composable domain walls}. By this we mean the subset of $n$-fold cartesian product $(D_1 \times \{ \pm \})^{\times n}$ selected by the following condition: For $( (x_1,+), \dots , (x_n,+) )$ we require $t(x_{i+1}) = s(x_{i})$ and $t(x_1) = s(x_n)$. If some of the `$+$' are changed for `$-$', the role of $s$ and $t$ is exchanged as in figure \ref{fig:cycl-comp-example}. The group $C_n$ of cyclic permutations acts on the $n$-tuples in $D_1^{(n)}$. The set $D_0$ is equipped with a map
\be \label{eq:j-definition}
  j : D_0 \longrightarrow \bigsqcup_{n=0}^\infty \Big( D_1^{(n)} / C_n \Big) \ .
\ee
In words, for each element $u$ of $D_0$, the map $j$ determines how many domain walls can end at a junction labelled by $u$ and what their orientations and domain wall conditions are, up to cyclic reordering. 

The map $j$ is similar in spirit to the relation between $D_1$ and $D_2$. There, we can combine the `source' and `target' maps into a single map $(s,t) : D_1 \to D_2 \times D_2$, which determines the world sheet phases that must lie on the two sides of a domain wall labelled by a given element of $D_1$. There is no need to divide by the symmetric group in two elements, because the orientations allow one to distinguish the `left' and `right' side of a domain wall.

\begin{figure}[tb] 
\begin{center}
\raisebox{-43pt}{\begin{picture}(120,100)
  \put(8,8){\scalebox{1.00}{\includegraphics{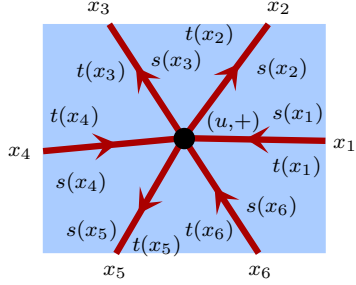}}}
  \put(8,8){
     \setlength{\unitlength}{1.00pt}\put(-15,-18){
     \put(77,67)   {\scriptsize $(u,\!{+})$ }
     \put(125, 59)   {\scriptsize $x_1$ }
     \put(102, 70)   {\scriptsize $s(x_1)$ }
     \put(102, 50)   {\scriptsize $t(x_1)$ }
     \put(100,111)   {\scriptsize $x_2$ }
     \put( 96, 87)   {\scriptsize $s(x_2)$ }
     \put( 69,100)   {\scriptsize $t(x_2)$ }
     \put(32,111)   {\scriptsize $x_3$ }
     \put(56,90)   {\scriptsize $s(x_3)$ }
     \put(28,86)   {\scriptsize $t(x_3)$ }
     \put(  2, 56)   {\scriptsize $x_4$ }
     \put(18,69)   {\scriptsize $t(x_4)$ }
     \put(20,44)   {\scriptsize $s(x_4)$ }
     \put( 38, 11)   {\scriptsize $x_5$ }
     \put(24,26)   {\scriptsize $s(x_5)$ }
     \put(49,20)   {\scriptsize $t(x_5)$ }
     \put( 93, 11)   {\scriptsize $x_6$ }
     \put(69,26)   {\scriptsize $t(x_6)$ }
     \put(92,35)   {\scriptsize $s(x_6)$ }
  }}
\end{picture}}
\vspace*{-1em}
\end{center}
\caption{Illustration of the condition of cyclic composability of domain walls. Given the $n$-tuple $( (x_1,\eps_1), \dots , (x_n,\eps_n) )$, the $i$'th domain wall (counted anti-clockwise) is labelled by $x_i$ and is pointing towards the junction point if $\eps_i=+$ and away from the junction point if $\eps_i=-$. In the present example the 6-tuple is $((x_1,+),(x_2,-),(x_3,-),(x_4,+),(x_5,-),(x_6,+))$. The images under the maps $s$ and $t$ to $D_2$ have to agree as shown, e.g.\ $s(x_1)=s(x_2)$ and $t(x_2)=s(x_3)$. The junction point has orientation `$+$' and is labelled by $u \in D_0$.}
\label{fig:cycl-comp-example}
\end{figure}

\subsubsection*{Objects}

In short, an object is a disjoint union of a finite number of unit circles $S^1$ with marked points, together with a germ of a collar.\footnote{
  This is more restrictive than allowing general one-dimensional manifolds as in section \ref{sec:bord-with-def} but does not loose any generality and has the advantage that objects form a set, and that the connected components of an object are already ordered by our convention on disjoint unions.}

In more detail, for a single $S^1$ the structure is as follows. Take $U=S^1$ to be the unit circle in $\Cb$, decorated as in section \ref{sec:bord-with-def}: a $0$-dimensional submanifold $U_0 \subset S^1$ (i.e.\ a set of points decorated by signs $\pm$), a map $d_1 : \pi_0(U_0) = U_0 \to D_1$, and a map $d_2 : \pi_0(U_1) \to D_2$, where $U_1 = S^1 \setminus U_0$. The maps $d_1$, $d_2$ have to be compatible with $s,t$ as in section \ref{sec:bord-with-def} (cf.\ figure \ref{fig:or-convention}\,a,\,b)).

\begin{figure}[tb] 
\begin{center}
\raisebox{-71pt}{\begin{picture}(170,160)
  \put(0,0){\scalebox{1.00}{\includegraphics{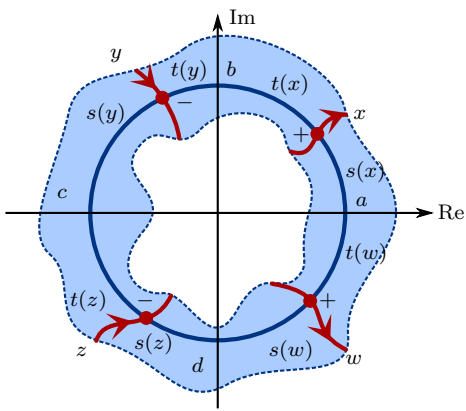}}}
  \put(0,0){
     \setlength{\unitlength}{1.00pt}\put(-7,-4){
     \put(126,43)   {\scriptsize $+$ }
     \put(116,106)   {\scriptsize $+$ }
     \put(72,118)   {\scriptsize $-$ }
     \put(57,43)   {\scriptsize $-$ }
     \put(136,21)   {\scriptsize $w$ }
     \put(107,24)   {\scriptsize $s(w)$ }
     \put(136,61)   {\scriptsize $t(w)$ }
     \put(139,114)   {\scriptsize $x$ }
     \put(136,92)   {\scriptsize $s(x)$ }
     \put(108,125)   {\scriptsize $t(x)$ }
     \put(47,137)   {\scriptsize $y$ }
     \put(70,130)   {\scriptsize $t(y)$ }
     \put(38,114)   {\scriptsize $s(y)$ }
     \put(34,24)   {\scriptsize $z$ }
     \put(32,43)   {\scriptsize $t(z)$ }
     \put(56,27)   {\scriptsize $s(z)$ }
     \put(171,76)   {\scriptsize Re}
     \put(92,150)   {\scriptsize Im}
     \put(140,81)   {\scriptsize $a$}
     \put(91,130)   {\scriptsize $b$}
     \put(27,84)   {\scriptsize $c$}
     \put(78,18)   {\scriptsize $d$}
  }}
\end{picture}}
\end{center}
\vspace*{-1em}
\caption{Illustration of a collar which forms part of the data for an object in the bordism category. In the notation of the text, the solid (blue) circle is a unit circle $U=S^1$, the shaded area is an open neighbourhood $A$, the solid (red) short lines form the oriented submanifold $A_1$ of $A$ which intersects $S^1$ in $U_0$. Our convention for the orientation of $A_1$ induced by that of $U_0$ (the signs `$\pm$') is as shown. The elements $a,b,c,d \in D_2$ label connected components of $U_1$ and their extension $A_2$; these labels have to agree with the source and target maps of the domain wall labels $w,x,y,z \in D_1$ as shown. E.g.\ $t(w)=a=s(x)$.}
\label{fig:collar}
\end{figure}

A {\em collar} is, in short, an extension of the above structure to an open neighbourhood of $S^1$ in $\Cb$, see figure \ref{fig:collar}. Let $A$ be an open neighbourhood of $S^1$, and let $A_1$ be a one-dimensional submanifold, closed in $A$, which intersects $S^1$ transversally (the tangents to $A_1$ and $S^1$ are linearly independent at intersection points). Set $A_2 = A \setminus A_1$. There are maps $\hat d_i : \pi_0(A_i) \to D_i$, $i=1,2$, compatible with $s,t$ as in figure \ref{fig:or-convention}\,c). The restriction of $A_2$ and $A_1$ to $S^1$ has to reproduce $U_1$ and $U_0$ with labelling and orientation, with conventions as in figure \ref{fig:collar}. Finally, $A$ carries a metric in conformal gauge, i.e.\ $g(z)_{ij} = e^{\sigma(z)} \delta_{ij}$ for a real-valued function $\sigma$ on $A$.

Two collars are equivalent if they agree in some open neighbourhood of $S^1$; an equivalence class is called a {\em germ of collars}.

For a disjoint union $U$ of such $S^1$ with collars, write $U_\text{in}$ for the subset obtained by taking only points $|z| \ge 1$ in the collar of each $S^1$, and $U_\text{out}$ when taking only points with $|z| \le 1$.

\subsubsection*{Morphisms}

Morphisms $M : U \to V$ are equivalence classes of surfaces with extra structure as in section \ref{sec:bord-with-def}, together with a metric. Thus we have a decomposition $M = M_2 \cup M_1 \cup M_0$, maps $\hat d_i : \pi_0(M_i) \to D_i$, $i=0,1,2$. The map $\hat d_1$ is compatible with $s,t$ as in figure \ref{fig:or-convention}\,c). Going beyond the level of detail in section \ref{sec:bord-with-def}, we also require the following:

\smallskip\noindent
\nxt {\em compatibility condition for} $\hat d_0$: For a point $p \in M_0$ labelled by $u \in D_0$ (i.e.\ $\hat d_0(p)=u$), let $((x_1,\eps_1),\dots,(x_n,\eps_n))$ denote the domain wall conditions and orientations in anti-clockwise order (with arbitrary starting point). We require that $j(u)$ is the cyclic permutation equivalence class of $((x_1,\eps_1),\dots,(x_n,\eps_n))$ if the junction point has orientation `$+$', cf.\ figure \ref{fig:cycl-comp-example}, and that it is in the class of $((x_n,-\eps_n),\dots,(x_1,-\eps_1))$ if the junction orientation is `$-$'. Junctions with opposite orientation are dual in the following sense (figure \ref{fig:junction-orientation}): if the bordism is a 2-sphere with two antipodal junction points both labelled by $u$ but with orientations `$+$' and `$-$', the domain walls starting at the two junctions can be joined up (intersection-free) by half-circles around the $S^2$.

\begin{figure}[tb] 
\begin{center}
\raisebox{-42pt}{\begin{picture}(155,85)
  \put(0,0){\scalebox{.60}{\includegraphics{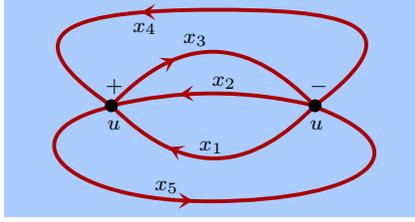}}}
  \put(0,0){
     \setlength{\unitlength}{.60pt}\put(-17,-15){
     \put(82,70)   {\scriptsize $u$ }
     \put(81,94)   {\scriptsize $+$ }
     \put(210,70)   {\scriptsize $u$ }
     \put(210,94)   {\scriptsize $-$ }
     \put(140,57)   {\scriptsize $x_1$ }
     \put(148,98)   {\scriptsize $x_2$ }
     \put(130,125)   {\scriptsize $x_3$ }
     \put(98,132)   {\scriptsize $x_4$ }
     \put(112,32)   {\scriptsize $x_5$ }
  }}
\end{picture}}
\end{center}
\vspace*{-1em}
\caption{Some domain walls and two junctions placed on $S^2$; we only display a fragment after projection to the plane. The two junctions are labelled by the same junction condition $u \in D_0$ but with opposite orientation `$\pm$'. Here, $j(u)$ is the cyclic permutation equivalence class of $((x_1,+),(x_2,+),(x_3,-),(x_4,+),(x_5,-))$. Thus, the junction labelled by $u$ with orientation `$+$' must have domain walls $(x_1,+),(x_2,+),(x_3,-),(x_4,+),(x_5,-)$ attached in anti-clockwise order, where for $(x_i,+)$ the domain wall is oriented towards the junction and for $(x_i,-)$ it is oriented away from the junction. The junction labelled by $u$ with orientation `$-$' must have domain walls $(x_5,+),(x_4,-),(x_3,+),(x_2,-),(x_1,-)$ attached in anti-clockwise order.}
\label{fig:junction-orientation}
\end{figure}

\smallskip\noindent
\nxt {\em boundary parametrisation}: A choice $U'$, $V'$ of collars representing the germs $U$, $V$, together with injective maps $f_\text{in} : U_\text{in} \to M$ and $f_\text{out} : V_\text{out} \to M$ which preserve the orientation, metric, boundary, 1-dimensional submanifold (with orientation) and labelling. The images of the factors $S^1$ in $U'$ and $V'$ are disjoint and cover the boundary of $M$.

\smallskip

Two surfaces are equivalent if they are isometric and the isometry preserves the decomposition $M = M_2 \cup M_1 \cup M_0$ together with orientations and labelling, and commutes with the boundary parametrisation in some open neighbourhood of $\partial M$.

Composition of morphisms is defined by choosing representatives and gluing via the boundary parametrisation; the collars ensure that this does not introduce `corners' and results again in a surface as described above. The equivalence class of the glued surface is independent of the choice of representatives.

\subsubsection*{Identities and symmetric structure}

So far there are no identity morphisms. We will add these by hand by extending the morphisms to include permutations of the $S^1$ factors in the disjoint union of a given object $U$. If we denote the permutation by $\sigma$ and the permuted disjoint union by $\sigma(U)$, we add morphisms $\sigma : U \to \sigma(U)$. Each morphism $U \to V$ is either a permutation (only possible if $V = \sigma(U)$) or a bordism; composing $\sigma^{-1}(U) \xrightarrow{\sigma} U \xrightarrow{M} V \xrightarrow{\tau} \tau(V)$ produces a bordism $\sigma^{-1}(U) \xrightarrow{M'} \tau(V)$, where $M'$ differs from $M$ only in the boundary parametrisation maps.

This endows the bordism category with a symmetric structure (the tensor product is disjoint union as usual).

\subsubsection*{Topological domain walls and junctions}

Denote the symmetric monoidal category described above by 
\be \label{eq:Bord21-def-symbol}
  \bord_{2,1}^\mathrm{def}(D_2,D_1,D_0) \ ,
\ee
or $\bord_{2,1}^\mathrm{def}$ for short. A two-dimensional quantum field theory with defects can now be defined as a symmetric monoidal functor $Q$ from $\bord_{2,1}^\mathrm{def}$ to topological vector spaces, which depends continuously on the moduli (namely, the metric on a morphism $M$, the decomposition $M = M_2 \cup M_1 \cup M_0$, and the boundary parametrisation). 

\begin{rem}
It is also easy to say when such a functor $Q$ describes a {\em conformal field theory with defects}. Namely, the vector space $Q(U)$ assigned to an object $U$ has to be independent of the conformal factor $e^{\sigma(z)}$ giving the metric $g(z)_{ij} = e^{\sigma(z)} \delta_{ij}$ on $U$, and the linear map $Q(M)$ assigned to a morphism $M$ changes by at most a scalar factor if the metric on $M$ is changed by a conformal factor $g \leadsto e^f g$ for some $f : M \to \Rb$. Thus $Q$ would in general only give a projective functor if one passes to conformal equivalence classes of manifolds (it would be a true functor if the so-called central charge vanishes).
\end{rem}

With respect to a chosen $Q$, we can define an interesting subset of domain walls and junctions:
\begin{itemize}
\item {\em topological domain walls} are elements $x$ of $D_1$ such that 
\begin{enumerate}
\item 
for all objects $U$, the vector space $Q(U)$ is unchanged under isotopies moving components of $U_0$ labelled by $x$ (and their extension into the collars with them)  such that no point of $U_0$ crosses the point  $-1 \in S^1$. This condition renders the space of such isotopies contractible (on germs of collars). In particular, a full $2\pi$-rotation is excluded, as it would in general induce a non-trivial endomorphism of $Q(U)$. The metric on $U$ stays fixed.
\item 
for all morphisms $M$, $Q(M)$ is invariant under isotopies moving components of $M_1$ labelled by $x$ while leaving $M_0$ fixed and restricting on $\partial M$ to isotopies respecting the condition in 1.
\end{enumerate}
\item {\em topological junctions} are elements $u$ of $D_0$ such that $j(u)$ only contains elements of $D_1$ labelling topological domain walls, and such that $Q(M)$ is invariant under isotopies moving components of $M_1$ labelled by topological domain wall conditions and points in $M_0$ labelled by $u$.
\end{itemize}
From now on we will concentrate on topological domain walls and junctions. We will denote the corresponding subsets by $D_i^\mathrm{top}(Q)$, $i=0,1$, or just $D_i^\mathrm{top}$.

\subsection{2-categories of defect conditions}\label{sec:2-cat-from-defect-QFT}

Let us fix a two-dimensional field theory with defects as above, i.e.\ a  continuous symmetric monoidal functor $Q$ from $\bord_{2,1}^\mathrm{def}(D_2,D_1,D_0)$ to an appropriate category of topological vector spaces. The construction below will only make use of the sets $D_2$ and $D_1$, but not of $D_0$. To emphasise this, we take $D_0 = \emptyset$ (i.e.\ no junctions are allowed). 

\medskip

Consider the topological domain walls $s,t : D_1^\mathrm{top}(Q) \to D_2$. This is a pre-category (which is nothing but a graph, see e.g.\ \cite[Ch.\,II.7]{MacLane-book}), and the aim of this section is to show that $Q$ turns the free category (with conjugates) generated by this pre-category into a 2-category. This 2-category can be thought of as capturing some of the genus-0 information of the field theory $Q$. Our conventions for bicategories are collected in appendix \ref{app:bicategories}.
 
Recall (e.g.\ from \cite[Ch.\,II.7]{MacLane-book}) that the free category is generated by tuples of composable arrows. By the free category with conjugates we mean the category whose objects are $D_2$ and whose morphisms $a \to b$ (for $a,b \in D_2$) are tuples
\be
  \dwt x \equiv ( (x_1,\eps_1),\dots,(x_n,\eps_n))
\ee
where $x_i \in D_1^\mathrm{top}$, $\eps_i \in \{\pm\}$. As for cyclically composable domain walls, if all signs $\eps_i=+$, then we require $s(x_i) = t(x_{i+1})$ and $s(x_n) = a$, $t(x_1) = b$. If some $\eps_i=-$, the role of $s$ and $t$ changes as in figure \ref{fig:cycl-comp-example}. Composition of $\dwt x : a \to b$ and $\dwt y : b \to c$ is by concatenation,
\be
  \dwt y \circ \dwt x = ((y_1,\nu_1),\dots,(y_m,\nu_n),(x_1,\eps_1),\dots,(x_n,\eps_n)) \ .
\ee
Let us denote this category by $\bfD\equiv \bfD[D_2,D_1^\mathrm{top}]$. Morphism spaces $a \to b$ are written as $\bfD(a,b)$. The conjugation is the involution $* : \bfD(a,b) \to \bfD(b,a)$ given by
\be
  ( (x_1,\eps_1),\dots,(x_n,\eps_n))^* = ( (x_n,-\eps_n),\dots,(x_1,-\eps_1)) \ .
\ee
Note that endomorphisms $\dwt x : a \to a$ in $\bfD$ are precisely the tuples of cyclically composable domain walls. In particular, for any $\dwt x, \dwt y : a \to b$, the morphism $\dwt y \circ \dwt x^*$ is cyclically composable. 

\medskip

To define the 2-category structure on domain walls, we need the notion of translation and scale invariant families of states. Their definition will take us a few paragraphs. 

We will only need to know $Q$ on a subset of bordisms, each of which consists of a single disc in $\Rb^2$ from which a number of smaller discs have been removed (if there were no domain walls, these bordisms would form the little discs operad, see e.g. \cite{May-book}). The metric on the bordism is the one induced by $\Rb^2$ and the boundaries are parametrised by linear maps that are a combination of a translation and a scale transformation $x \mapsto r x + v$, where $x,v \in \Rb^2$ and $r \in \Rb_{>0}$. 

The objects which serve as source and target of these bordisms are described as follows. Let $\dwt x$ be cyclically composable and denote by $O(\dwt x;r)$ an object in the bordism category consisting of a single $S^1$ with 0-dimensional submanifold given by $n$ points not containing $-1 \in S^1$. These are clockwise cyclically labelled $x_1,\dots,x_n$, such that $x_1$ labels the first point in clockwise direction from $-1 \in S^1$. The collar around $S^1$ is obtained by taking concentric copies to fill a small neighbourhood. The conformal factor defining the metric on the collar is $e^\sigma = r^2$, so that the parametrising map $x \mapsto r x + v$, which takes the $S^1$ (with radius 1) to a circle of radius $r$ is an isometry. As all $x_i$ are in $D_1^\mathrm{top}$, by definition the vector space $Q(O(\dwt x;r))$ does not depend on the precise position of the $n$ marked points on $S^1$, as long as they are in the prescribed ordering. However, the vector space $Q(O(\dwt x;r))$ may still depend on $r$.\footnote{
In many examples (but not always), the spaces $Q(O(\dwt x;r))$ for different values of $r$ are isomorphic, with a preferred isomorphism given by evaluating $Q$ on an annulus with the two radii. But even in this case we do not demand that one passes to a formulation of the theory where these state spaces are actually {\em equal}.}
  
\begin{figure}[tb] 
$$
\psi_{\dwt x;R} = 
Q\hspace{-1pt}\Bigg(\hspace{-1pt}
\raisebox{-33pt}{\begin{picture}(71,71)
  \put(0,0){\scalebox{.60}{\includegraphics{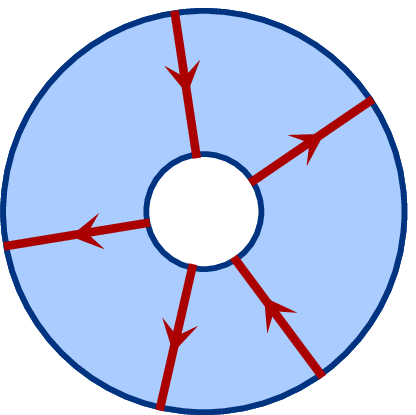}}}
  \put(0,0){
     \setlength{\unitlength}{.60pt}\put(-17,-15){
  }}
\end{picture}}
\hspace{-1pt}\Bigg)\hspace{-1pt}(\psi_{\dwt x;r_1})
=
Q\hspace{-1pt}\Bigg(\hspace{-1pt}
\raisebox{-33pt}{\begin{picture}(71,71)
  \put(0,0){\scalebox{.60}{\includegraphics{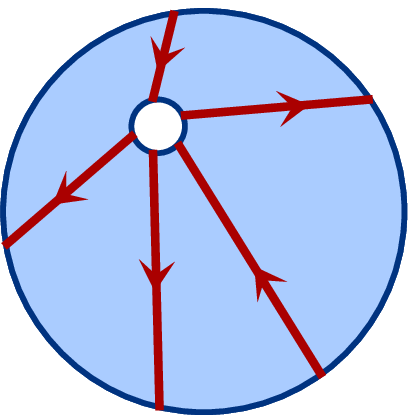}}}
  \put(0,0){
     \setlength{\unitlength}{.60pt}\put(-17,-15){
  }}
\end{picture}}
\hspace{-1pt}\Bigg)\hspace{-1pt}(\psi_{\dwt x;r_2})
=
Q\hspace{-1pt}\Bigg(\hspace{-1pt}
\raisebox{-33pt}{\begin{picture}(71,71)
  \put(0,0){\scalebox{.60}{\includegraphics{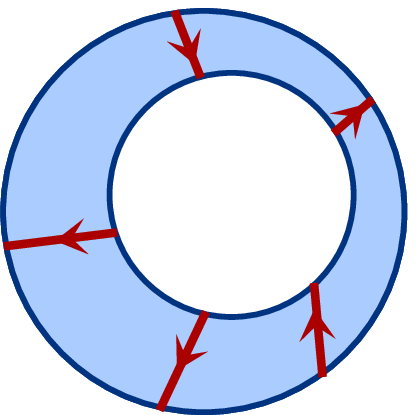}}}
  \put(0,0){
     \setlength{\unitlength}{.60pt}\put(-17,-15){
  }}
\end{picture}}
\hspace{-1pt}\Bigg)\hspace{-1pt}(\psi_{\dwt x;r_3})
$$
\vspace*{-1em}
\caption{Illustration of the condition for scale and translation invariant family of states: Let $\psi_{\dwt x}$ be such a family. The figure shows $Q$ applied to three annuli, understood as bordisms $O(\dwt x,r_i) \to O(\dwt x,R)$, for $i=1,2,3$, where $r_i$ denotes the radius of the inner disc of the $i$'th annulus shown above. All three annuli have the same outer radius $R$. Applying $Q$ to the bordism and evaluating the resulting linear map on $\psi_{\dwt x,r_i} \in Q(O(\dwt x,r_i))$, $i=1,2,3$, results always in the vector $\psi_{\dwt x;R} \in Q(O(\dwt x,R))$ of the same family.}
\label{fig:scale-trans-inv}
\end{figure}

Let $D(R;r,v) : O(\dwt x;r) \to O(\dwt x;R)$ be the bordism given by a disc of radius $R$ in $\Rb^2$ centred at the origin, from which a smaller disc of radius $r$ and centre $v$ has been removed. The domain walls are straight lines and the boundary parametrisation is given by scaling and translation as above (see figure \ref{fig:scale-trans-inv}). A {\em scale and translation invariant family} $\psi_{\dwt x}$ is a family of vectors $\{ \psi_{\dwt x;r} \}_{r \in \Rb_{>0}}$ with $\psi_{\dwt x;r} \in Q(O(\dwt x;r))$ such that
\be \label{eq:scale-trans-inv-def}
  \psi_{\dwt x;R} = Q\big(D(R;r,v)\big)( \psi_{\dwt x;r} ) 
  \quad \text{ for all } r,R>0 , \, v \in \Rb^2 \text{ with } r + |v| < R \ .
\ee
This condition is illustrated in figure \ref{fig:scale-trans-inv}. The {\em space of scale and translation invariant families},
\be
  H^\mathrm{inv}(\dwt x) \ ,
\ee   
is defined to be the vector space of all scale and translation invariant families $\psi_{\dwt x} \equiv \{ \psi_{\dwt x;r} \}_{r \in \Rb_{>0}}$ for fixed $\dwt x$. The space $H^\mathrm{inv}(\dwt x)$ may be zero-dimensional.

Scale and translation invariant families have the following important property: all amplitudes $Q(M)$ -- with $M$ a disc in $\Rb^2$ with smaller discs removed --  are independent of the position and size of an in-going boundary circle $O(\dwt x;r)$ for which $\psi_{\dwt x;r}$ is inserted as the corresponding argument. This can be seen by using functoriality of $Q$ to cut out a disc $D(R;r,v)$ from $M$ containing such an in-going boundary, then moving the in-going boundary circle using the defining property \eqref{eq:scale-trans-inv-def}, and finally gluing the resulting disc $D(R;r',v')$ back.

\medskip

\begin{figure}[tb] 
\begin{center}
\raisebox{50pt}{a)} \hspace{.5em}
\raisebox{-64pt}{\begin{picture}(60,135)
  \put(0,37){\scalebox{.80}{\includegraphics{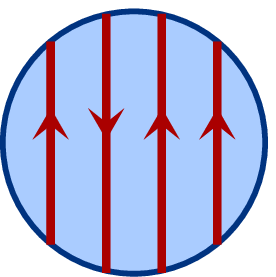}}}
  \put(0,37){
     \setlength{\unitlength}{.80pt}\put(-262,-377){
     \put(265,414)   {\scriptsize $b$ }
     \put(331,414)   {\scriptsize $a$ }
     \put(250,378)   {\scriptsize $(x_1,\!-)$ }
     \put(266,366)   {\scriptsize $(x_2,\!+)$ }
     \put(303,366)   {\scriptsize $(x_3,\!-)$ }
     \put(320,378)   {\scriptsize $(x_4,\!-)$ }     
     \put(250,450)   {\scriptsize $(x_1,\!+)$ }
     \put(266,460)   {\scriptsize $(x_2,\!-)$ }
     \put(303,460)   {\scriptsize $(x_3,\!+)$ }
     \put(320,450)   {\scriptsize $(x_4,\!+)$ }
  }}
\end{picture}}
\hspace{4em} \raisebox{50pt}{b)} \hspace{.0em}
\raisebox{-64pt}{\begin{picture}(87,135)
  \put(0,10){\scalebox{.50}{\includegraphics{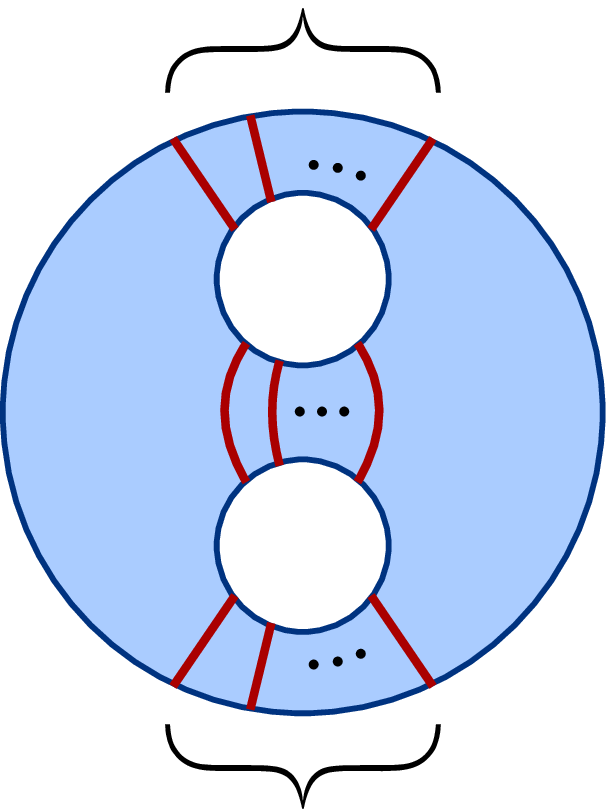}}}
  \put(0,10){
     \setlength{\unitlength}{.50pt}\put(-213,-300){
     \put(233,408)   {\scriptsize $b$ }
     \put(358,408)   {\scriptsize $a$ }
     \put(295,372)   {\scriptsize $u$ }
     \put(295,449)   {\scriptsize $v$ }
     \put(296,288)   {\scriptsize $\dwt x^*$ }
     \put(296,539)   {\scriptsize $\dwt z$ }
     \put(255,352)   {\scriptsize $x_1$ }
     \put(332,352)   {\scriptsize $x_p$ }
     \put(260,410)   {\scriptsize $y_1$ }
     \put(328,412)   {\scriptsize $y_q$ }
     \put(255,472)   {\scriptsize $z_1$ }
     \put(332,473)   {\scriptsize $z_r$ }
  }}
\end{picture}}
\hspace{4em} \raisebox{50pt}{c)} \hspace{.5em}
\raisebox{-64pt}{\begin{picture}(90,135)
  \put(-5,10){\scalebox{.50}{\includegraphics{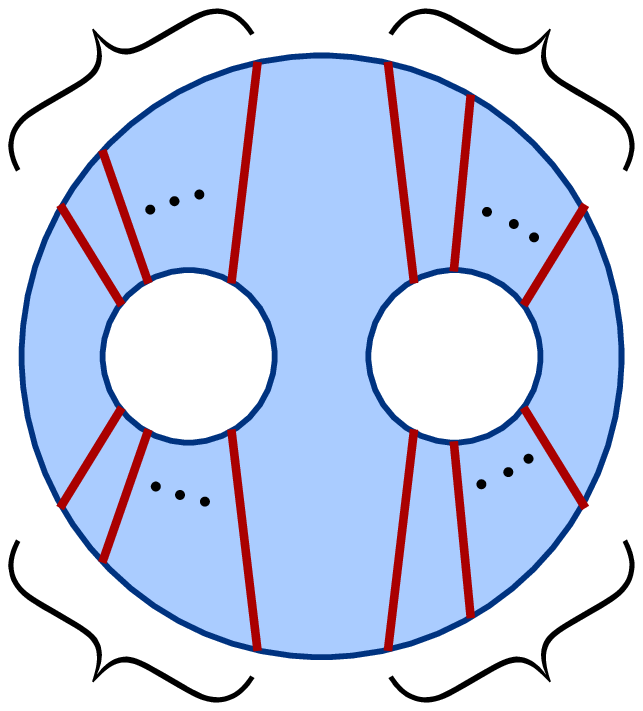}}}
  \put(-5,10){
     \setlength{\unitlength}{.50pt}\put(-200,-301){
     \put(217,410)   {\scriptsize $c$ }
     \put(297,410)   {\scriptsize $b$ }
     \put(374,410)   {\scriptsize $a$ }
     \put(363,307)   {\scriptsize $\dwt x^*$ }
     \put(227,307)   {\scriptsize $\dwt y^*$ }
     \put(363,516)   {\scriptsize $\dwt x'$ }
     \put(227,516)   {\scriptsize $\dwt y'$ }
     \put(307,375)   {\scriptsize $x_1$ }
     \put(366,392)   {\scriptsize $x_p$ }
     \put(281,375)   {\scriptsize $y_q$ }
     \put(220,392)   {\scriptsize $y_1$ }
     \put(307,452)   {\scriptsize $x_1'$ }
     \put(366,429)   {\scriptsize $x_r'$ }
     \put(281,452)   {\scriptsize $y_s'$ }
     \put(220,429)   {\scriptsize $y_1'$ }
     \put(256,411)   {\scriptsize $q$ }
     \put(335,411)   {\scriptsize $p$ }
  }}
\end{picture}}
\end{center}
\vspace*{-1.5em}
\caption{Bordisms defining the structure maps for the 2-category. a) the identity on a 1-morphism $\dwt x: a \to b$; b) vertical composition of 2-morphisms $u \in \bfD_2(\dwt x,\dwt y)$ and $v \in \bfD_2(\dwt y,\dwt z)$, where $\dwt x, \dwt y, \dwt z \in \bfD(a,b)$ (drawing the vector inside the cut-out disc means that this vector is to be used as the corresponding argument after applying $Q$); c) horizontal composition of 2-morphisms $p \in \bfD_2(\dwt x,\dwt x')$ and $q \in \bfD_2(\dwt y,\dwt y')$, where $\dwt x,\dwt x' : a \to b$ and $\dwt y,\dwt y' : b \to c$.}
\label{fig:id-hor-vert-comp}
\end{figure}

Given $\dwt x, \dwt y : a \to b$, we define the space of 2-morphisms from $\dwt x$ to $\dwt y$ to be
\be \label{eq:defect-2-cat-2morphs}
  \bfD_2(\dwt x,\dwt y) := H^\mathrm{inv}(\dwt y \circ \dwt x^*) \ .
\ee
The identity 2-morphisms, and the horizontal and vertical composition are defined by the bordisms shown in figure \ref{fig:id-hor-vert-comp}. 
The identity 1-morphism $\one_a : a \to a$, for $a \in D_2$, is the empty tuple $\one_a = ()$.

\begin{rem} \label{rem:in-image-of-proj}
(i) In order to obtain {\em families} of states from the bordisms shown in figure \ref{fig:id-hor-vert-comp}, one uses that there is an $\Rb_{>0}$-action on metric bordisms given by rescaling the metric. For the disc shaped bordisms in $\Rb^2$ relevant here, this amounts to a rescaling by some $R>0$. Consider the bordism in figure \ref{fig:id-hor-vert-comp}\,b) as an example. Call this bordism $M$ and assume that the radius of its outer disc is 1. Thus
\be
  M ~:~ O(\dwt z \circ \dwt y^*;r_1) \,\sqcup\, O(\dwt y \circ \dwt x^*;r_2) ~\longrightarrow~ O(\dwt z \circ \dwt x^*;1) \ .
\ee  
For each $R>0$ this produces a bordism $RM : O(\dwt z \circ \dwt y^*;R r_1) \sqcup O(\dwt y \circ \dwt x^*;R r_2) \to O(\dwt z \circ \dwt x^*;R)$. Given two families $\phi_1 \in H^\mathrm{inv}(\dwt z \circ \dwt y^*)$ and $\phi_2 \in H^\mathrm{inv}(\dwt y \circ \dwt x^*)$, we obtain a family of vectors
\be
  \psi_R := Q(RM)\big(\,\phi_{1;Rr_1}\,,\,\phi_{2;Rr_2}\,\big) ~\in~ Q\big(O(\dwt z \circ \dwt x^*;R)\big) \ .
\ee
The family $\{\psi_R\}_{R \in \Rb_{>0}}$ is again scale and translation invariant. This follows by substituting into the defining property \eqref{eq:scale-trans-inv-def} and using that $\phi_1$ and $\phi_2$ are scale and translation invariant families.
\\[.3em]
(ii) For all $r>0$, $v \in \Rb^2$ with $r+|v|<1$, the bordism $D(1;r,v) : O(\dwt x;r) \to O(\dwt x;1)$ induces the identity map on $H^\mathrm{inv}(\dwt x)$. This is just a reformulation of the defining property \eqref{eq:scale-trans-inv-def} using the prescription in (i). In other words, cylinders give the identity map on $H^\mathrm{inv}$, not just idempotents. This is important when verifying that the bordism in figure \ref{fig:id-hor-vert-comp}\,a) is indeed the unit for the vertical composition in figure \ref{fig:id-hor-vert-comp}\,b).
\end{rem}

The fact that we are working with topological domain walls and with scale and translation invariant families of states ensures that the properties of a 2-category are satisfied (as composition of 1-morphisms is strictly associative, and the unit 1-morphisms are strict, we indeed have a 2-category and not only a bicategory).
Let us denote this 2-category as 
\be \label{eq:def-defect-2cat}
  \bfD[Q]\equiv \bfD[D_2,D_1^\mathrm{top};Q] \ . 
\ee  
As was to be expected, moving one dimension up from the example in section \ref{sec:1-dim-tft-def} also increased the categorial level: in this construction the pre-category $D_1^\mathrm{top} \rightrightarrows D_2$ gets extended to a 2-category. 

\begin{figure}[tb] 
\begin{center}
\raisebox{55pt}{a)} \hspace{-1.5em}
\raisebox{-40pt}{\begin{picture}(85,88)
  \put(0,5){\scalebox{.50}{\includegraphics{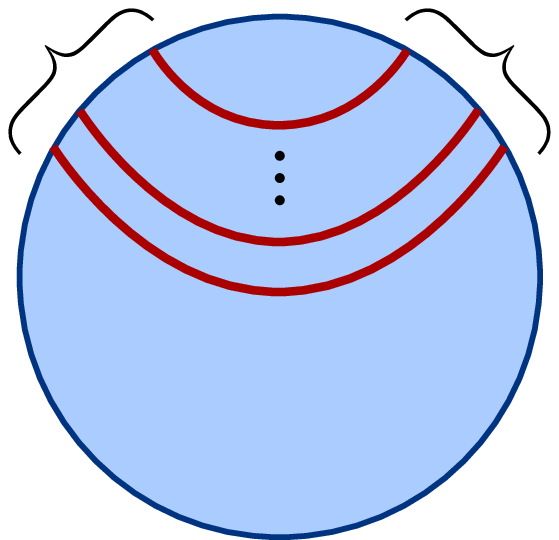}}}
  \put(0,5){
     \setlength{\unitlength}{.50pt}\put(-213,-332){
     \put(236,418)   {\scriptsize $x_1$ }
     \put(258,443)   {\scriptsize $x_2$ }
     \put(278,465)   {\scriptsize $x_m$ }
     \put(220,479)   {\scriptsize $\dwt x$ }
     \put(370,479)   {\scriptsize $\dwt x^*$ }
     \put(307,470)   {\scriptsize $a$ }
     \put(307,366)   {\scriptsize $b$ }
  }}
\end{picture}}
\hspace{2em} 
\raisebox{55pt}{b)} \hspace{-1.5em}
\raisebox{-40pt}{\begin{picture}(85,88)
  \put(0,0){\scalebox{.50}{\includegraphics{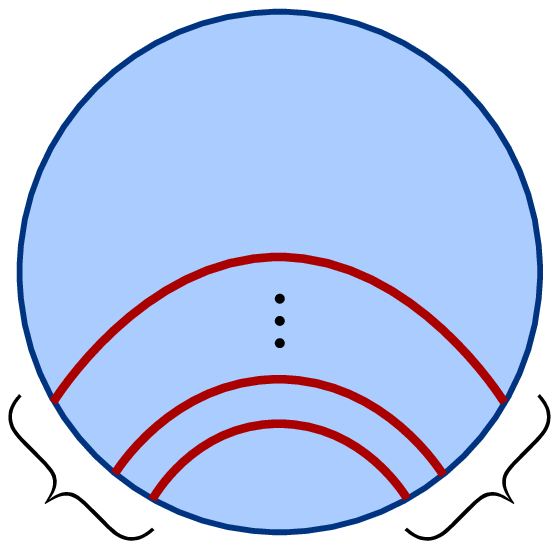}}}
  \put(0,0){
     \setlength{\unitlength}{.50pt}\put(-213,-332){
     \put(316,355)   {\scriptsize $x_1$ }
     \put(339,375)   {\scriptsize $x_2$ }
     \put(354,401)   {\scriptsize $x_m$ }
     \put(222,340)   {\scriptsize $\dwt x$ }
     \put(367,340)   {\scriptsize $\dwt x^*$ }
     \put(288,350)   {\scriptsize $b$ }
     \put(288,456)   {\scriptsize $a$ }
  }}
\end{picture}}
\hspace{2em} 
\raisebox{55pt}{c)} \hspace{-1.5em}
\raisebox{-40pt}{\begin{picture}(85,88)
  \put(0,5){\scalebox{.50}{\includegraphics{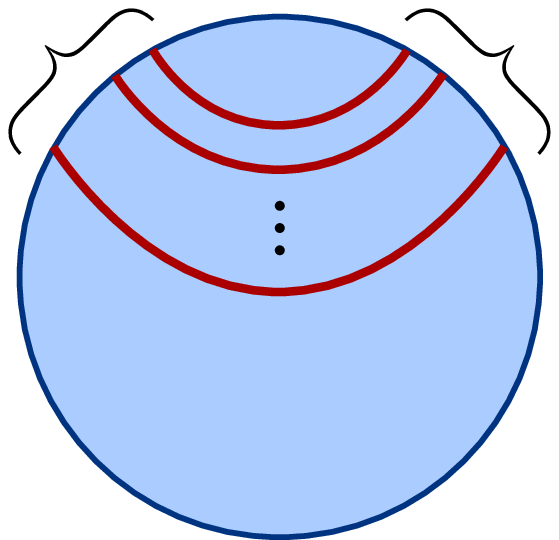}}}
  \put(0,5){
     \setlength{\unitlength}{.50pt}\put(-213,-332){
     \put(311,467)   {\scriptsize $x_1$ }
     \put(335,440)   {\scriptsize $x_2$ }
     \put(349,413)   {\scriptsize $x_m$ }
     \put(220,479)   {\scriptsize $\dwt x^*$ }
     \put(370,479)   {\scriptsize $\dwt x$ }
     \put(290,466)   {\scriptsize $b$ }
     \put(290,366)   {\scriptsize $a$ }
  }}
\end{picture}}
\hspace{2em} 
\raisebox{55pt}{d)} \hspace{-1.5em}
\raisebox{-40pt}{\begin{picture}(85,88)
  \put(0,0){\scalebox{.50}{\includegraphics{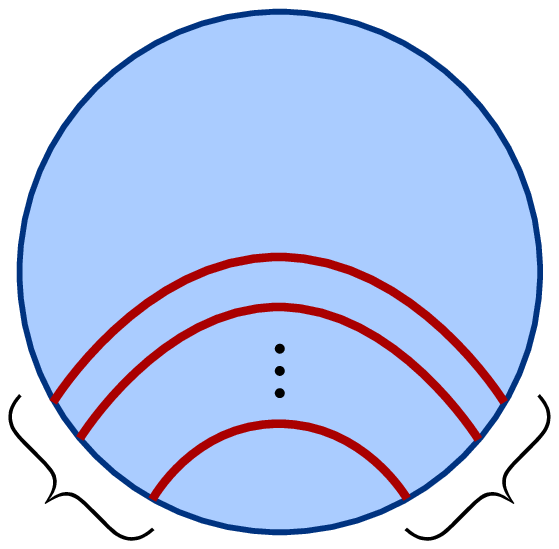}}}
  \put(0,0){
     \setlength{\unitlength}{.50pt}\put(-213,-332){
     \put(232,406)   {\scriptsize $x_1$ }
     \put(252,372)   {\scriptsize $x_2$ }
     \put(278,356)   {\scriptsize $x_m$ }
     \put(212,340)   {\scriptsize $\dwt x^*$ }
     \put(367,340)   {\scriptsize $\dwt x$ }
     \put(308,352)   {\scriptsize $a$ }
     \put(308,456)   {\scriptsize $b$ }
  }}
\end{picture}}
\end{center}
\vspace*{-1em}
\caption{Bordisms defining the adjunction maps in $\bfD[Q]$: The left and right adjoint of $\dwt x : a \to b$ is $\dwt x^* : b \to a$ and applying $Q$ to the bordisms shown gives the adjunction maps
a) $b_{\dwt x} : \one_b \to \dwt x \circ \dwt x^*$; 
b) $d_{\dwt x} : \dwt x^* \circ \dwt x \to \one_a$; 
c) $\tilde b_{\dwt x} : \one_a \to \dwt x^* \circ \dwt x$; 
d) $\tilde d_{\dwt x} : \dwt x \circ \dwt x^* \to \one_b$.}
\label{fig:duality-bordisms}
\end{figure}

\begin{rem} \label{rem:top-def-interesting-for-qft}
(i) Actually, $\bfD[Q]$ carries more structure. For example, each 1-morphism $\dwt x : a \to b$ has a left and a right adjoint, namely $\dwt x^* : b \to a$, together with adjunction maps as shown in figure \ref{fig:duality-bordisms} (see \cite[Sec.\,I.6]{Gray:1974} for more on adjunctions in bicategories). Such rigid and related structures on the category of defects were discussed already in \cite{Frohlich:2006ch,McNamee:2009a,Carqueville:2010hu}. Rigid and pivotal structures on the 2-category $\bfD[Q]$ were studied in the context of planar algebras\footnote{
  A planar algebra \cite{Jones:1999a} can be understood as a two-dimensional theory with exactly two world sheet phases $D_2 = \{a,b\}$, exactly one topological domain wall type $D_1 = \{a \xrightarrow{x} b\}$, and no junctions, $D_0= \emptyset$. Furthermore, the theory is only defined on genus zero surfaces with exactly one out-going boundary circle, that is, on discs with smaller discs removed, see figures \ref{fig:scale-trans-inv}--\ref{fig:duality-bordisms}.
} 
in \cite{Ghosh:2008a,Das:2010a}.
\\[.3em]
(ii) The 2-category $\bfD[Q]$ is an invariant attached to a quantum field theory with defects. Interestingly, even though only `topological data' enters its definition (topological domain walls and scale and translation invariant families of states), general quantum field theories do produce more general 2-categories $\bfD[Q]$ than topological field theories. In a nutshell, the reason is that the rigid structure mentioned in (i) will tend to produce integer quantum dimensions for topological field theories, while for example in rational conformal field theories non-integer quantum dimensions occur.\footnote{
A notion of 2d TFT with domain walls was also studied in \cite{Kodiyalam:2005a} in relation to subfactor planar algebras. Apart from there being exactly two world sheet phases and one type of domain wall -- as is usual in the planar algebra setting -- there is one important difference: in \cite{Kodiyalam:2005a} a bordism is in addition equipped with a decomposition into `genus 0 components', and bordisms with different such decompositions are considered distinct, unless they have a common refinement (see \cite[Def.\,2.7]{Kodiyalam:2005a}). This excludes for example the decomposition of a torus along different cycles to be used below (the `Cardy condition'). Thus the functors constructed in \cite{Kodiyalam:2005a} are in general not defect TFTs in our sense (cf.\ eqn.\,\eqref{eq:tft-with-defects-functor} below).
}

In slightly more detail, fix a topological field theory with defects and take $k=\Cb$. Consider the bordism $M : \emptyset \to \emptyset$ given by a torus (say $[0,1] \times [0,1]$ with opposite edges identified) with a single defect line labelled $x \in D_1$ wrapping a non-contractible cycle (say $[0,1] \times \{ \tfrac12 \}$). We label the unique connected domain of the bordism by $a \in D_2$, so that $x : a \to a$. Then $Q(M) : \Cb \to \Cb$ is just a number. This number can be computed in two ways. Let $M_| : O(x) \to O(x)$ be the annulus obtained by cutting $M$ along $\{0\} \times [0,1]$, i.e.\ by not identifying the vertical edges of $M$. Then $M_|$ is just the cylinder over $O(x)$. We will learn later in remark \ref{rem:scale-trans-inv-in-TFT}, that for a topological field theory the space of scale and translation invariant states $H^\mathrm{inv}(x)$ can be identified with the image of $Q(M_|)$ (which is an idempotent in topological field theory) in $Q(O(x))$. Thus, using also functoriality of $Q$,
\be \label{eq:torus-as-2-traces-1}
  Q(M) = \tr_{Q(O(x))} Q(M_|) = \tr_{H^\mathrm{inv}(x)} \id = \dim(H^\mathrm{inv}(x)) \in \Zb_{\ge 0} \ .
\ee
On the other hand, we can consider $M_- : O(a) \to O(a)$, which is obtained by cutting $M$ along $[0,1] \times \{0\}$, i.e.\ by not identifying the horizontal edges. The endomorphism $Q(M_-) : Q(O(a)) \to Q(O(a))$ is called {\em defect operator} for the defect $x : a \to a$, we will return to this briefly in section \ref{sec:T^cw-construct} below. By the same reasoning as above, $Q(M) = \tr_{Q(O(a))} Q(M_-)$. Let $C_{O(a)}$ be the cylinder over $O(a)$. By functoriality (and again only for topological field theory) we have $Q(M_-) = Q(C_{O(a)}) \circ Q(M_-) \circ Q(C_{O(a)})$, so that the image of $Q(M_-)$ lies in the image of the idempotent $Q(C_{O(a)})$ and $Q(M_-)$ acts trivially on the kernel of $Q(O(a))$. By restriction we obtain an endomorphism $Q(M_-) : H^\mathrm{inv}(\one_a) \to H^\mathrm{inv}(\one_a)$ and the trace can be computed in this restriction,
\be \label{eq:torus-as-2-traces-2}
  Q(M) = \tr_{Q(O(a))} Q(M_-) = \tr_{H^\mathrm{inv}(\one_a)} Q(M_-) \  .
\ee
The fact that the traces \eqref{eq:torus-as-2-traces-1} and \eqref{eq:torus-as-2-traces-2} agree is known as the Cardy condition (because of the paper \cite{Cardy:1989ir}) and was first investigated for topological defects in \cite{Petkova:2000ip} in the context of rational conformal field theory. 

In summary, for topological field theories, the trace of a defect operator over the space of scale and translation invariant states is equal to the dimension of a vector space, and is thus a non-negative integer. In rational conformal field theories with non-degenerate vacuum (this means that the space $H^\mathrm{inv}(\one_a)$ is one-dimensional), the defect operator acts by multiplying with a number -- the (left or right) quantum dimension of $\dwt x$ -- and the trace $\tr_{H^\mathrm{inv}(\one_a)} Q(M_-)$ is then equal to this number. In many examples, this quantum dimension is not an integer (and not even rational, though still algebraic). This is the case for the examples studied in \cite{Petkova:2000ip} and \cite{tft1,Frohlich:2006ch}.
\end{rem}

\section[A simple example: 2d lattice topological field theory]{A simple example: 2d lattice top.\ field theory}\label{sec:ex-latticeTFT}

It is difficult to find functors from $\bord_{2,1}^\mathrm{def}(D_2,D_1,D_0)$ to topological vector spaces which depend non-trivially on the metric. On the other hand, it is easy to construct examples where the functors are independent of the metric and boundary parametrisation. In this section we describe such a construction.

For the remainder of this section we fix a field $k$.

\subsection{Category of smooth bordisms with defects} \label{sec:smooth-bord-cat}

Instead of posing restrictions on the functor one can modify the bordism category accordingly. This leads us to define a symmetric monoidal category of {\em smooth bordisms with defects}, which we denote as
\be \label{eq:Bord21-def,top-symbol}
  \bord_{2,1}^\mathrm{def,top}(D_2,D_1,D_0) \ .
\ee
The modifications relative to the definition in section \ref{sec:2d-bord-def} are as follows. 
\begin{itemize}
\item {\em Objects:} The collar around an $S^1$ no longer carries a metric.
\item {\em Morphisms:} The manifold does not carry a metric and the parametrising maps are only required to be smooth (rather than isometric). Two bordisms are equivalent if there is a diffeomorphism between them which preserves orientation, decomposition, labelling, as well as the image of the point $-1 \in S^1$ in each connected component of the boundary (rather than commuting with the parametrising maps in some neighbourhood of the boundary).
\end{itemize}
The symmetric monoidal structure is as in section \ref{sec:2d-bord-def}. 

Note that this definition is different form the standard 2-bordism category for topological field theories even in the case without domain walls ($D_2 = \{*\}$ and $D_1 = D_0 = \emptyset$) because we have still added the identities (and $S_n$-action) by hand to the space of morphisms; in particular, the cylinder over a given object $U$ is {\em not} the identity morphism (but it is still an idempotent).

\medskip

Denote by $\vect_f(k)$ the symmetric monoidal category of finite-dimensional $k$-vector spaces. Fix furthermore sets $D_i$, $i=0,1,2$, of defect labels with maps as required. The aim of this section is to construct examples of symmetric monoidal functors
\be \label{eq:tft-with-defects-functor}
  T : \bord_{2,1}^\mathrm{def,top}(D_2,D_1,D_0) \longrightarrow \vect_f(k) \ .
\ee
We will do this via a lattice TFT construction which is a straightforward generalisation of the original lattice TFT without domain walls \cite{Bachas:1992cr,Fukuma:1993hy} and of the lattice construction of homotopy TFTs in \cite[Sec.\,7]{Turaev:1999yf}. A construction of field theory correlators on arbitrary world sheets in the presence of domain walls first appeared in \cite{tft1,tft4,Frohlich:2006ch}, where it was carried out in the context of two-dimensional rational conformal field theory. The construction for TFTs given in this section can be extracted from this in the special case that the modular category underlying the CFT is that of vector spaces. 

Note, however, that this will not give the most general such functor $T$ (as it does not even in the case without domain walls); a classification of functors \eqref{eq:tft-with-defects-functor} akin to the one in the situation without domain walls in \cite{Dijkgraaf:1989,Abrams:1996ty} is at present not known. A classification of 2d TFTs with defects that can be extended down to points has been reported in \cite{Schommer-Pries:2009}; related results are given in \cite[Ex.\,4.3.23]{Lurie:2009aa}. Two dimensional homotopy TFTs over $X=K(G,1)$, which in the present language correspond to defect TFTs with invertible defects labelled by a group $G$,\footnote{
  By a defect TFT with only invertible defects we mean the case that $D_2= \{*\}$, $D_0 = \emptyset$ and that $D_1=G$ is a group (more generally we can take $s,t : D_1 \to D_2$ to be a groupoid). We demand that the linear map assigned by $T$ in \eqref{eq:tft-with-defects-functor} to a bordism does not change if we replace two parallel defect lines with opposite orientation ``$\rightleftarrows$'' labelled by the same element of $D_1$ by the `reconnected' defect lines ``$\supset\subset$''. Given a bordism $M$ with defect lines labeled by group elements in $G$, we can construct a principal $G$-bundle on the bordism by taking the trivial $G$-bundle over each component of $M_2$ and choosing the transition functions across $M_1$ to be multiplication with the group element labelling that component of $M_1$. This  gives a functor from $\bord_{2,1}^\mathrm{def,top}(\{*\},G,\emptyset)$ to principal $G$-bundles with two-dimensional base.
}
have been classified in \cite[Thm.\,4.1]{Turaev:1999yf}. The classification in the case of simply connected $X$ is given in \cite[Thm.\,4.1]{Brightwell:1999a}.

\medskip

The construction works in two steps. In the first step, we introduce a larger category, $\bord_{2,1}^\mathrm{def,top,cw}$, where objects and morphisms are in addition endowed with a cell decomposition (section \ref{sec:bord-with-celldecomp}). It comes with a forgetful functor $F : \bord_{2,1}^\mathrm{def,top,cw} \to  \bord_{2,1}^\mathrm{def,top}$, which is surjective (not just essentially surjective) and full (but not faithful). We then construct a symmetric monoidal functor $\Tcw : \bord_{2,1}^\mathrm{def,top,cw} \to \vect_f(k)$ (section \ref{sec:T^cw-construct}). In the second step, we show that $\Tcw$ is independent of the cell-decomposition in the sense that there exists a symmetric monoidal functor $T$ which makes the diagram
\be \label{eq:Tcw-T-comm-diag}
\raisebox{2em}{\xymatrix{
\bord_{2,1}^\mathrm{def,top,cw} \ar[d]_F \ar[rr]^{\Tcw} && \vect_f(k) 
\\
\bord_{2,1}^\mathrm{def,top} \ar@{-->}[urr]_{\exists! \, T}
}}
\ee
commute on the nose (section \ref{sec:T^cw-cell-indep}). Since $F$ is surjective and full, this diagram defines $T$ uniquely.

\subsection{Category of bordisms with cell decomposition} \label{sec:bord-with-celldecomp}

We will use a class of cell decompositions introduced in \cite[Def.\,5.1]{Kirillov:2010a}, called PLCW decompositions there. These are less general than CW-complexes, which for example allow one to decompose $S^2$ into a 0-cell (a point on $S^2$) and 2-cell ($S^2$ minus that point). However, they are more general than regular CW-complexes, where one cannot identify different faces of one given cell, or even triangulations, where in addition each cell is a simplex. It is shown in \cite{Kirillov:2010a} that any two PLCW decompositions are related by a simple collection of local moves.

Given a compact $n$-dimensional manifold $M$, possibly with non-empty boundary, we will consider decompositions of $M$ into a finite number of mutually disjoint open $k$-cells, $k=0,\dots,n$, with the following properties. 
Let $B^k$ be the closed unit ball in $\Rb^k$ and let $\mathring B^k$ be its interior. For each $k$-cell $C$ there has to exist a continuous\footnote{
  To match \cite{Kirillov:2010a} we should work in the PL setting. We thus impose the additional condition
  that the manifold $M$ with the decomposition as defined is homeomorphic to a PLCW complex. This is
  automatically satisfied if the manifold and the cell maps are already PL.
}
map $\varphi : B^k \to M$ such that 
\\[-1.7em]
\begin{itemize}
\itemsep 0.1em
\parskip 0pt
\item[-] $C$ is the homeomorphic image of $\mathring B^k$,
\item[-] there exists a decomposition of the boundary $S^{k-1}$ of $B^k$ which gets mapped by $\varphi$ to the decomposition in $M$,
\item[-] $\varphi$ is a homeomorphism when restricted to the interior of each cell on $S^{k-1}$. 
\end{itemize}
For more details we refer to \cite{Kirillov:2010a}. We just note that this last condition excludes the decomposition of $S^2$ into a single 0-cell and a single 2-cell mentioned above.

By abuse of terminology, we will refer to a decomposition of $M$ as just described simply as a {\em cell decomposition} $C(M)$. The set of $i$-dimensional cells ($i$-cells for short) will be called $C_i(M)$. 

\medskip
 
\begin{figure}[tb] 
\begin{center}
\raisebox{30pt}{a)} \hspace{.5em}
\raisebox{-38pt}{\begin{picture}(70,80)
  \put(0,0){\scalebox{.8}{\includegraphics{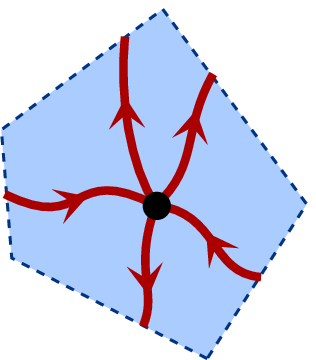}}}
\end{picture}}
\hspace{5em}
\raisebox{30pt}{b)} \hspace{.5em}
\raisebox{-38pt}{\begin{picture}(70,80)
  \put(0,0){\scalebox{.8}{\includegraphics{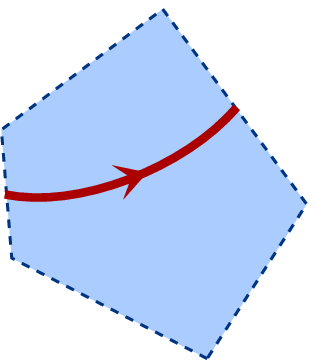}}}
\end{picture}}
\end{center}
\vspace*{-1em}
\small
\caption{Allowed configurations of 2-cells and defects: (a) 2-cell containing a point from $M_0$; here only a star-shaped pattern of domain walls is allowed and each edge has to be crossed by precisely one domain wall. (b) 2-cell containing no point from $M_0$ but part of $M_1$; here only one segment of $M_1$ may lie in the 2-cell, and it must enter and leave the 2-cell via distinct edges.}
\label{fig:allowed-cells}
\end{figure}

The category $\bord_{2,1}^\mathrm{def,top,cw}(D_2,D_1,D_0)$ is the same as $\bord_{2,1}^\mathrm{def,top}(D_2,D_1,D_0)$, except that objects and morphisms are equipped with the following extra structure:
\begin{itemize}
\item {\em Objects:} The 1-dimensional part of an object $U$ (the disjoint union of circles, not their collars) are equipped with a  cell decomposition $C(U)$ such that each point of the set $U_0$ of marked points lies in a $1$-cell (recall that a $1$-cell is homeomorphic to an open interval), and such that each $1$-cell contains at most one point of $U_0$.
\item {\em Morphisms:} By a {\em bordism with cell decomposition} we mean a bordism $M = M_2 \cup M_1 \cup M_0$ in $\bord_{2,1}^\mathrm{def,top}$, which is equipped with a cell decomposition $C(M)$ such that
\begin{itemize}
\item the 1-dimensional submanifold $M_1$ only intersects 1-cells and 2-cells, but not 0-cells. Each 1-cell intersects $M_1$ in at most one point.
\item each point of $M_0$ lies in a 2-cell, and each 2-cell contains at most one point of $M_0$. A 2-cell containing a point of $M_0$ must be homeomorphic to one of the type shown in figure \ref{fig:allowed-cells}\,a), i.e.\ it may only contain a `star-shaped' configuration of domain walls, such that each edge of this 2-cell is traversed by exactly one domain wall.
\item a 2-cell containing no point from $M_0$ but a part of $M_1$ must be homeomorphic to one of the type shown in figure \ref{fig:allowed-cells}\,b), i.e.\ its intersection with $M_1$ is a single open interval.
\end{itemize}
If $M$ has non-empty boundary $\partial M$, we demand that $\partial M$ is a union of 0-cells and 1-cells (and hence does not intersect 2-cells). Furthermore, the decomposition of $\partial M$ has to be the image of the  cell decomposition of the source and target object under the parametrising maps. 

Morphisms are equivalence classes of bordisms with cell decomposition.
Two bordisms with cell decomposition are equivalent if their underlying bordisms are equivalent in $\bord_{2,1}^\mathrm{def,top}$ and if the two cell decompositions are related by an isotopy (which at each instance has to give a bordism with cell decomposition).
\end{itemize}
All cells in $C(U)$ and $C(M)$ are a priori unoriented. However, the 2-cells of $C(M)$ have an orientation induced by that of $M$.

\subsection{Algebraic preliminaries} \label{sec:alg-prel}

By an {\em algebra} we shall always mean a unital, associative algebra over $k$. The {\em centre} $Z(A)$ of an algebra $A$ is the (commutative, unital) subalgebra
\be
  Z(A) = \big\{\,z \in A \,\big|\, za = az \text{ for all } a \in A \,\big\} \ . 
\ee
Given a right $A$-module $M$ and a left $A$-module $N$, the tensor product $M \otimes_A N$ is defined as the cokernel 
\be
  M \otimes A \otimes N \xrightarrow{l-r} M \otimes N \xrightarrow{\pi_\otimes} M \otimes_A N \ ,
\ee  
where 
$l(m \otimes a \otimes n) = m \otimes (a.n)$ 
and 
$r(m \otimes a \otimes n) = (m.a) \otimes n$ denote the left and right action.
Given an $A$-$A$-bimodule $X$, we shall also require the `cyclic tensor product' which identifies the left and right action of $A$ on $X$. We denote this tensor product by $\cytens A X$; it is defined as the cokernel
\be
  A \otimes X \xrightarrow{l-r} X \xrightarrow{\pi_\otimes} \, \cytens A X \ ,
\ee
where $l(a \otimes x) = a.x$ and $r(a \otimes x) = x.a$. Note that $\cytens A X$ is in general no longer an $A$-$A$-bimodule; it does, however, carry a coinciding left and right action of $Z(A)$. Denote by $[A,A]$ the linear subspace of $A$ (not the ideal) generated by all elements of the form $ab-ba$. Then by definition $\cytens A A = A/[A,A]$.\footnote{
  The quotient $A/[A,A]$ is not necessarily isomorphic to $Z(A)$ -- just take $A$ to be the 3-dimensional algebra of upper triangular $2{\times}2$-matrices in which case $Z(A) \cong k$ while $A / [A,A] \cong k \oplus k$.}

A {\em Frobenius algebra} is a finite-dimensional algebra $A$ together with a linear map $\eps : A \to k$, called the {\em counit}, such that the bilinear pairing 
\be\label{eq:pairing-via-eps}
  \langle a,b \rangle := \eps(ab) 
\ee
on $A \times A$ is non-degenerate. We denote by $\beta : k \to A \otimes A$ the unique linear map such that $( (\eps \circ m) \otimes \id_A) \circ (\id \otimes \beta) = \id_A$. In other words, if $a_i$ is a basis of $A$ and $a_i'$ the dual basis in the sense that $\langle a_i,  a_j'\rangle = \delta_{i,j}$, then $\beta(1) = \sum_i a_i' \otimes a_i$. The defining property can be written as
\be \label{eq:Frob-tracepair-completeness}
  \sum_i \langle x, a_i' \rangle \,a_i = x 
  \quad \text{for all $x \in A$} \ .
\ee
By associativity of $A$, the pairing \eqref{eq:pairing-via-eps} is always {\em invariant}, i.e.\ $\langle a,bc \rangle = \langle ab,c \rangle$.

For $a \in A$ let $L_a : A \to A$ be the left multiplication by $a$, $L_a(b) = ab$. We say that $A$ is a {\em Frobenius algebra with trace pairing} if it is a Frobenius algebra whose counit is given by\footnote{
  Let $R_a(b) = ba$ denote the right multiplication by $a$ and set $\eps(a) = \tr_A(L_a)$, $\eps'(a) = \tr_A(R_a)$. Then $(A,\eps)$ is a Frobenius algebra if and only if $(A,\eps')$ is a Frobenius algebra and in this case $\eps(a) = \eps'(a)$. For a proof see e.g.\ \cite[Lem.\,3.9]{tft1} in the special case that the tensor category is $\vect_f(k)$. Note that if $(A,\eps)$ and $(A,\eps')$ are not Frobenius, the statement may be false; for example, if $A$ is the 3-dimensional algebra of upper triangular $2{\times}2$-matrices and $a = \big(\begin{smallmatrix} 0&0\\0&1 \end{smallmatrix}\big)$, then $\tr_A(L_a) = 1$ and $\tr_A(R_a) = 2$.
} 
$\eps(a) = \tr_A(L_a)$.  
Note that `Frobenius'  is extra structure on an algebra, while `Frobenius with trace pairing' is a property of an algebra.

In the following we list some of the special properties of Frobenius algebras with trace pairing. Firstly,
it is automatically {\em symmetric}: $\langle a,b \rangle = \langle b,a \rangle$. Consequently also $\beta$ is symmetric: $\beta(1) = \sum_i a_i' \otimes a_i =  \sum_i a_i \otimes a_i'$. Next, by the definition of the counit we have the following identity:
$\langle x, 1 \rangle = \eps(x) = \tr_A(L_x) = \sum_i a_i^*(x a_i) = \sum_i \langle x a_i, a_i' \rangle =  \langle x , \sum_i a_i  a_i' \rangle$. Since this holds for all $x \in A$, we conclude
\be \label{eq:beta-special-property}
  \sum_i a_i a_i' = 1 \ .
\ee
For all $a,b \in A$ we have
\be \label{eq:beta-move-factors}
  \sum_i (a a_i' b) \otimes a_i = \sum_i a_i' \otimes (b a_i a) 
  \quad \in A \otimes A \ ; 
\ee
this can be proved by pairing both sides with an arbitrary $x$. On the left hand side, pairing with $x$ produces 
$\sum_i \langle x, a a_i' b\rangle a_i = \sum_i \langle b x a,  a_i' \rangle a_i = b x a$ by \eqref{eq:Frob-tracepair-completeness}. The right hand side gives the same result.

If we interpret $\beta(1)$ as an element of the algebra $A^\mathrm{op} \otimes A$, the two equations above lead to the following result:

\begin{lem} \label{lem:beta(1)-prop}
In the algebra $A^\mathrm{op} \otimes A$ we have
\\[.3em]
(i) 
$\beta(1)\cdot (a \otimes 1) = \beta(1)\cdot (1 \otimes a)$
and 
$ (a \otimes 1) \cdot \beta(1) =  (1 \otimes a) \cdot \beta(1)$,
\\[.3em]
(ii) $\beta(1)\cdot \beta(1) = \beta(1)$.
\end{lem}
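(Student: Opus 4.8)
The plan is to unwind the multiplication of $A^\mathrm{op}\otimes A$ on the explicit expression $\beta(1)=\sum_i a_i'\otimes a_i$ and reduce both claims to the identities \eqref{eq:beta-special-property} and \eqref{eq:beta-move-factors} established above. Recall that in $A^\mathrm{op}\otimes A$ the product of $u\otimes v$ with $u'\otimes v'$ is $(u'u)\otimes(vv')$, so that multiplication is reversed in the first tensor factor; I will keep careful track of this throughout.

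For part (i), I would simply compute the four products. On the one hand $\beta(1)\cdot(a\otimes1)=\sum_i(aa_i')\otimes a_i$ while $\beta(1)\cdot(1\otimes a)=\sum_i a_i'\otimes(a_i a)$; their equality is precisely \eqref{eq:beta-move-factors} with $b=1$. On the other hand $(a\otimes1)\cdot\beta(1)=\sum_i(a_i'a)\otimes a_i$ and $(1\otimes a)\cdot\beta(1)=\sum_i a_i'\otimes(a a_i)$, and these agree by \eqref{eq:beta-move-factors} with the roles of $a$ and $b$ reset to $1$ and $a$ respectively. Thus (i) requires no new input beyond reading off \eqref{eq:beta-move-factors} in two special cases.

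For part (ii), the plan is to use (i) to turn the left-factor multiplications into right-factor ones. Writing $\beta(1)\cdot\beta(1)=\sum_j\beta(1)\cdot(a_j'\otimes a_j)$ and factoring $(a_j'\otimes a_j)=(a_j'\otimes1)\cdot(1\otimes a_j)$, the first identity of part (i) (applied with $a=a_j'$) replaces $\beta(1)\cdot(a_j'\otimes1)$ by $\beta(1)\cdot(1\otimes a_j')$, so that $\beta(1)\cdot(a_j'\otimes a_j)=\beta(1)\cdot(1\otimes a_j'a_j)$. Summing over $j$ gives $\beta(1)\cdot\bigl(1\otimes\sum_j a_j'a_j\bigr)$, and it remains to observe that $\sum_j a_j'a_j=1$, whence $\beta(1)\cdot\beta(1)=\beta(1)$.

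The one point that needs genuine care is this last observation: \eqref{eq:beta-special-property} only records $\sum_j a_j a_j'=1$, with the factors in the opposite order. I would close the gap using the symmetry of $\beta$, i.e.\ $\sum_i a_i'\otimes a_i=\sum_i a_i\otimes a_i'$, which holds because the trace pairing is symmetric; applying the multiplication map $A\otimes A\to A$ to both sides of this equality yields $\sum_j a_j'a_j=\sum_j a_j a_j'=1$. Keeping the opposite-algebra convention straight and invoking symmetry at exactly this step are the only places where a slip could occur; everything else is a direct substitution.
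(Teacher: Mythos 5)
Your proof is correct and follows essentially the same route as the paper: part (i) is read off from \eqref{eq:beta-move-factors} in the two special cases, and part (ii) uses the same factorisation $\beta(1)\cdot\beta(1)=\sum_i\beta(1)\cdot(a_i'\otimes 1)\cdot(1\otimes a_i)$ followed by (i) and $\sum_i a_i'a_i=1$. Your explicit observation that $\sum_i a_i'a_i=1$ requires the symmetry of $\beta$ on top of \eqref{eq:beta-special-property} is a detail the paper leaves implicit, but the argument is otherwise identical.
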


\begin{proof}
By definition, the product of $A^\mathrm{op} \otimes A$ is $(a \otimes a') \cdot (b \otimes b') = (ba) \otimes (a'b')$. Statement (i) is nothing but \eqref{eq:beta-move-factors}, while (ii) follows from (i) and \eqref{eq:beta-special-property} via 
$
  \beta(1) \cdot \beta(1) 
  = \sum_i \beta(1) \cdot (a_i' \otimes 1) \cdot (1 \otimes a_i)
  = \sum_i \beta(1) \cdot (1 \otimes a_i') \cdot (1 \otimes a_i)
  = \sum_i \beta(1) \cdot (1 \otimes (a_i' a_i))
  = \beta(1) \cdot (1 \otimes 1)
$.
\end{proof}

Consider a right $A$-module $M$ and a left $A$-module $N$ as above. For Frobenius algebras with trace pairing, the tensor product $M \otimes_A N$ can be canonically identified with a subspace of $M \otimes N$ as follows. The tensor product $M \otimes N$ carries an $A^\mathrm{op} \otimes A$-action. Define the linear map $p_\otimes : M \otimes N \to M \otimes N$ to be the action of $\beta(1)$ on $M\otimes N$:
\be
  p_\otimes(m \otimes n) = \beta(1).(m \otimes n) = \sum_i (m.a_i') \otimes (a_i.n) \ .
\ee

\begin{lem}
Let $A$ be a Frobenius algebra with trace pairing. 
\\
(i) $p_\otimes(m.a \otimes n) = p_\otimes(m \otimes a.n)$ holds for all $a \in A$, $m \in M$, $n \in N$.
\\
(ii) $p_\otimes$ is idempotent.
\\
(iii) $\mathrm{im}(p_\otimes) = M \otimes_A N$.
\end{lem}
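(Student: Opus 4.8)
The plan is to handle the three parts in order, using the earlier algebraic identities as the engine and regarding $M \otimes N$ throughout as a left $A^{\mathrm{op}} \otimes A$-module, with $(a \otimes a')$ acting by $m \otimes n \mapsto (m.a) \otimes (a'.n)$. With this viewpoint $p_\otimes$ is by construction nothing but the action of the element $\beta(1) = \sum_i a_i' \otimes a_i$, and this is what lets me recycle Lemma \ref{lem:beta(1)-prop}.

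For part (i) I would apply the linear map $A \otimes A \to M \otimes N$, $p \otimes q \mapsto (m.p) \otimes (q.n)$, to the identity \eqref{eq:beta-move-factors} specialised to $b = 1$, i.e.\ $\sum_i (a a_i') \otimes a_i = \sum_i a_i' \otimes (a_i a)$. Since $(m.a).a_i' = m.(a a_i')$ and $a_i.(a.n) = (a_i a).n$, the left-hand side becomes $p_\otimes(m.a \otimes n)$ and the right-hand side $p_\otimes(m \otimes a.n)$, which is exactly the claim. Part (ii) is then immediate: because the action respects the product of $A^{\mathrm{op}} \otimes A$, the idempotency $\beta(1)\cdot\beta(1) = \beta(1)$ of Lemma \ref{lem:beta(1)-prop}(ii) transports directly to $p_\otimes \circ p_\otimes = p_\otimes$.

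Part (iii) is where the real work lies, since it asserts a canonical identification between the \emph{subspace} $\mathrm{im}(p_\otimes) \subseteq M \otimes N$ and the \emph{quotient} $M \otimes_A N = (M \otimes N)/\mathrm{im}(l-r)$. I would first note that part (i) says precisely that $p_\otimes$ annihilates every relation $m.a \otimes n - m \otimes a.n$, so $\mathrm{im}(l-r) \subseteq \ker p_\otimes$ and $p_\otimes$ descends to a map $\bar p : M \otimes_A N \to \mathrm{im}(p_\otimes)$ with $\bar p \circ \pi_\otimes = p_\otimes$. I then claim $\bar p$ and $\pi_\otimes|_{\mathrm{im}(p_\otimes)}$ are mutually inverse. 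One direction is idempotency: for $\xi \in \mathrm{im}(p_\otimes)$ one has $\bar p(\pi_\otimes(\xi)) = p_\otimes(\xi) = \xi$.

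For the reverse direction I would compute $\pi_\otimes(\bar p([m \otimes n])) = \pi_\otimes\big(\sum_i (m.a_i') \otimes (a_i.n)\big)$; moving each $a_i$ across the balanced tensor (using $(m.a_i').a_i = m.(a_i' a_i)$) turns this into $\big[\,m.(\sum_i a_i' a_i) \otimes n\,\big]$. The main subtlety, and the step I expect to be the crux, is that I need $\sum_i a_i' a_i = 1$, whereas \eqref{eq:beta-special-property} only supplies $\sum_i a_i a_i' = 1$. This is bridged by applying the multiplication $\mu : A \otimes A \to A$ to the symmetry $\sum_i a_i' \otimes a_i = \sum_i a_i \otimes a_i'$ of $\beta(1)$ (available because a Frobenius algebra with trace pairing is symmetric), which gives $\sum_i a_i' a_i = \sum_i a_i a_i' = 1$. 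Hence $\pi_\otimes(\bar p([m \otimes n])) = [m \otimes n]$, so $\bar p$ is an isomorphism and $\mathrm{im}(p_\otimes) \cong M \otimes_A N$ as desired.
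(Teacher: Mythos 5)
Your proof is correct and follows essentially the same route as the paper: parts (i) and (ii) are read off from Lemma \ref{lem:beta(1)-prop} via the $A^{\mathrm{op}}\otimes A$-action, and part (iii) rests on the same two computational facts the paper uses, namely $p_\otimes\circ(l-r)=0$ and $\sum_i a_i'a_i=1$ (the latter obtained from \eqref{eq:beta-special-property} by symmetry of $\beta$). The only cosmetic difference is that the paper verifies the universal property of the cokernel directly, whereas you exhibit the induced map $\bar p$ and the restricted quotient map as an explicit mutually inverse pair.
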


\begin{proof}
Parts (i) and (ii) are immediate consequences of lemma \ref{lem:beta(1)-prop}.
For part (iii), denote by
\be
  e_\otimes : \mathrm{im}(p_\otimes) \longrightarrow M \otimes N
  \quad , \quad
  \pi_\otimes : M \otimes N \longrightarrow \mathrm{im}(p_\otimes) \ ,
\ee
the embedding of the image of $p_\otimes$ into $M \otimes N$ and the projection from $M \otimes N$ to the image. They satisfy
\be
  e_\otimes \circ \pi_\otimes = p_\otimes
  \quad  ,  \quad
  \pi_\otimes \circ e_\otimes = \id_{\mathrm{im}(p_\otimes)} \ .
\ee
Consider the diagram
\be \label{eq:ptens-prop-aux1}
\raisebox{2em}{\xymatrix{
M \otimes A \otimes N \ar[r]^(.58){l-r} & 
  M \otimes N \ar[r]^{\pi_\otimes} \ar[d]^f & 
  \mathrm{im}(p_\otimes) \ar@{-->}[dl]^{f \circ e_\otimes}
\\
& V
}}
\qquad .
\ee
Here $V$ is a vector space and $f$ satisfies $f \circ (l-r) = 0$. By part (i) the map $\pi_\otimes$ satisfies $\pi_\otimes \circ (l-r)= 0$. From $f(m.a \otimes n) = f(m \otimes a.n)$ it is easy to see that $f \circ p_\otimes = f$, and so $f = (f \circ e_\otimes) \circ \pi_\otimes$. Thus the above diagram commutes and we have verified the universal property of the cokernel.
\end{proof}

The construction of $p_\otimes$, $e_\otimes$, $\pi_\otimes$ works similarly for the $\cytens A$ tensor product of an $A$-$A$-bimodule $X$. In this case, 
\be \label{eq:p-otimes-cytens}
  p_\otimes(x) = \sum_i a_i.x.a_i' \ . 
\ee
We use the same notation $e_\otimes : \cytens A X \to X$ and $\pi_\otimes : X \to  \cytens A X$ as above. For multiple tensor products, the idempotents can be combined. For the state spaces of the TFT with defects to be constructed below, the maps
\be \label{eq:er-def-multiple-tensor}
\raisebox{0em}{\xymatrix{
  X_1 \otimes X_2 \otimes \cdots \otimes X_n  \ar@<1ex>[rr]^(.4){\pi_\otimes} &&
  \cytens{A_{n,1}} X_1 \otimes_{A_{1,2}} X_2 \otimes_{A_{2,3}} \cdots \otimes_{A_{n-1,n}} X_n
  \ar@<1ex>[ll]^(.6){e_\otimes}
}}
\ee
will be useful. Here the $X_i$ are bimodules with left/right actions of algebras as indicated. 
As above, the maps $e_\otimes$ and $\pi_\otimes$ satisfy $\pi_\otimes \circ e_\otimes = \id$ and $e_\otimes \circ \pi_\otimes = p_\otimes$. The projector $p_\otimes$ in this case is
\be
  p_\otimes(x_1 \otimes x_2 \otimes \cdots \otimes x_n) 
  = \sum_{i_1,i_2,\dots,i_n}
  (a_{i_1}.x_1.a_{i_2}') \otimes (a_{i_2}.x_2.a_{i_3}') \otimes \cdots \otimes (a_{i_{n}}.x_n.a_{i_1}')
   \ ,
\ee
where the $a_{i_k}$ are bases of the corresponding algebras.

\begin{lem} \label{lem:ho-hom=ho-cohom}
Let $A$ be a Frobenius algebra with trace pairing. Then 
$$
  Z(A) \,\cong~ \cytens A A \,=\, A / [A,A] \ .
$$
\end{lem}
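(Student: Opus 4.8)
The claim is that for a Frobenius algebra $A$ with trace pairing, $Z(A) \cong \cytens A A = A/[A,A]$. My plan is to produce an explicit pair of mutually inverse linear maps between $Z(A)$ and $A/[A,A]$ built out of the Frobenius structure, and then to verify that these maps are isomorphisms. The key tool available is $\beta(1) = \sum_i a_i' \otimes a_i$ together with its properties in lemma \ref{lem:beta(1)-prop} and the relations \eqref{eq:beta-special-property}, \eqref{eq:beta-move-factors}.

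\textbf{The two maps.}
First I would define a map $P : A \to Z(A)$ by the averaging formula $P(x) = \sum_i a_i\, x\, a_i'$ (this is exactly the projector $p_\otimes$ of \eqref{eq:p-otimes-cytens}, now viewed as landing in $Z(A)$). Lemma \ref{lem:beta(1)-prop}(i) is precisely the statement that $\beta(1)$ absorbs left and right multiplication by any $a \in A$ interchangeably, which translates into the centrality of $P(x)$: for any $b \in A$ one has $b\,P(x) = P(x)\,b$, so $P$ does land in $Z(A)$. Next, since $P$ by part (i) of the preceding lemma satisfies $P(x.a) = P(a.x)$ for all $a$ (i.e.\ $P(ab) = P(ba)$), it annihilates $[A,A]$ and hence descends to a map $\bar P : A/[A,A] \to Z(A)$. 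In the other direction I take the map $\iota$ induced on the quotient by the inclusion $Z(A) \hookrightarrow A$, i.e.\ $\iota : Z(A) \to A/[A,A]$, $z \mapsto z + [A,A]$.

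\textbf{Verifying the maps are mutually inverse.}
The plan is to check the two composites. For $\bar P \circ \iota : Z(A) \to Z(A)$, I compute $\bar P(\iota(z)) = \sum_i a_i\, z\, a_i'$; because $z$ is central this equals $\sum_i a_i a_i'\, z = 1 \cdot z = z$ using \eqref{eq:beta-special-property}. So $\bar P \circ \iota = \id_{Z(A)}$. For $\iota \circ \bar P : A/[A,A] \to A/[A,A]$, I need that $\sum_i a_i\, x\, a_i' \equiv x \pmod{[A,A]}$ for all $x$. This is the cyclicity of the averaging modulo commutators: each term $a_i\, x\, a_i' = (a_i)(x a_i')$ is congruent to $(x a_i')(a_i) = x\, a_i' a_i$ modulo $[A,A]$, and summing and applying \eqref{eq:beta-special-property} in the symmetric form $\sum_i a_i' a_i = 1$ (valid since the trace pairing is symmetric, so $\beta$ is symmetric) gives $x$. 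Hence $\iota \circ \bar P = \id$, and the two maps are inverse isomorphisms of vector spaces.

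\textbf{Expected main obstacle.}
The routine linear algebra above is straightforward given the algebraic preliminaries; the one point demanding care — and the likely main obstacle — is making the congruence $a_i\, x\, a_i' \equiv x\, a_i' a_i \pmod{[A,A]}$ rigorous term-by-term and ensuring the symmetric completeness relation $\sum_i a_i' a_i = 1$ is the one actually being invoked. This relies essentially on symmetry of the trace pairing (so that $\sum_i a_i a_i' = \sum_i a_i' a_i = 1$); if one only had \eqref{eq:beta-special-property} in one ordering the cyclic manipulation would still close, but it is cleanest to record the symmetric form explicitly. I would also remark that only the vector-space isomorphism is claimed, so one need not check multiplicativity, although in fact $\bar P$ respects the $Z(A)$-module structures, which is a useful sanity check.
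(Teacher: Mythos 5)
Your proof is correct and follows essentially the same route as the paper: both rest on the averaging idempotent $p_\otimes(x)=\sum_i a_i x a_i'$, using the completeness relation \eqref{eq:beta-special-property} to show it fixes $Z(A)$ and \eqref{eq:beta-move-factors} to show its image is central. The only difference is organisational — the paper identifies $A/[A,A]$ with $\mathrm{im}(p_\otimes)$ via the cokernel argument of \eqref{eq:ptens-prop-aux1} and then proves $\mathrm{im}(p_\otimes)=Z(A)$ by double inclusion, whereas you verify the two composites $\bar P\circ\iota$ and $\iota\circ\bar P$ directly, the latter via the term-by-term congruence $a_i x a_i'\equiv x a_i' a_i \pmod{[A,A]}$, which the paper gets for free from the idempotent picture.
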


\begin{proof}
Here $p_\otimes : A \to A$ is given by $p_\otimes(x) = \sum_i a_i x a_i'$. By the same reasoning as in \eqref{eq:ptens-prop-aux1} we conclude that the cokernel $A / [A,A]$ is isomorphic to $\mathrm{im}(p_\otimes)$. It remains to show that $\mathrm{im}(p_\otimes) = Z(A)$. For $z \in Z(A)$ we have $p_\otimes(z) =  \sum_i a_i z a_i' =  z(\sum_i a_i  a_i') = z$, where we used \eqref{eq:beta-special-property}. Thus $Z(A) \subset \mathrm{im}(p_\otimes)$. Conversely, if $x \in \mathrm{im}(p_\otimes)$ then $x = \sum_i a_i x a_i'$. Then for all $y \in A$ we have 
\be
  xy 
  = \sum_i a_i x a_i' y 
  = m \circ ( \sum_i a_i \otimes (x a_i' y) )
  = m \circ ( \sum_i (y a_i x) \otimes a_i' )
  =  \sum_i y a_i x a_i' 
  = y x \ ,
\ee
where we used \eqref{eq:beta-move-factors} together with symmetry of $\beta$. Thus $\mathrm{im}(p_\otimes) \subset Z(A)$.
\end{proof}

\begin{rem}
Recall that for a unital associative $R$-algebra (for $R$ a commutative ring), the $0$'th Hochschild homology $H\!H_0(A)$ and cohomology $H\!H^0(A)$ are given by
\be
  H\!H_0(A) = A / [A,A] \qquad \text{and} \qquad H\!H^0(A) = Z(A) \ ,
\ee
see \cite[Sec.\,1.1,\,1.5]{Loday:1992}. Similarly, for an $A$-$A$-bimodule $X$ one has $H_0(A,X) = \,\cytens A X$ and $H^0(A,X) = \{ x \in X \,|\, a.x=x.a \text{ for all } a \in A \}$. In the situation of lemma \ref{lem:ho-hom=ho-cohom}, that is if $A$ is a Frobenius algebra with trace pairing, one finds $H_0(A,X) \cong H^0(A,X)$. We will see in \eqref{eq:def-TFT-T-on-S1} below that the $0$'th Hochschild (co)homology provides the state space which the 2d lattice TFT with defects assigns to a circle with a single marked point.
\end{rem}

\subsection{Data for lattice TFT with defects}\label{sec:lattice-TFT-data}

Fix sets $D_2$, $D_1$, $D_0$ with maps $s,t : D_1 \to D_2$ and a map $j$ as in \eqref{eq:j-definition}. The data that serves as input to the lattice TFT construction is as follows.
\begin{enumerate}
\item
For each $a \in D_2$ a Frobenius algebra $A_a$ with trace pairing.
\item 
For each $x \in D_1$ a finite-dimensional $A_{t(x)}$-$A_{s(x)}$-bimodule $X_x$. 
\end{enumerate}
There is also a piece of data associated to $D_0$, but we need a bit of preparation before we present it. For an $A$-$B$-bimodule $X$ write $X^+ \equiv X$ and write $X^-$ for the $B$-$A$-bimodule $X^*$. Recall the free category with conjugates $\bfD \equiv \bfD[D_2,D_1]$ defined in section \ref{sec:2-cat-from-defect-QFT}. For $\dwt x \in \bfD(a,b)$ we define the $A_b$-$A_a$-bimodule 
\be \label{eq:X-dwtx-def}
  X_{\dwt x} = X_{x_1}^{\eps_1} \otimes_{A_{1,2}} X_{x_2}^{\eps_2} \otimes_{A_{2,3}} \cdots \otimes_{A_{n-1,n}} X_{x_n}^{\eps_n} \ ,
\ee 
where $\dwt x = ((x_1,\eps_1),\dots,(x_n,\eps_n))$ and $A_{i,i+1}$ denotes the algebra which acts from the right on $X_i^{\eps_i}$ and from the left on $X_{i+1}^{\eps_{i+1}}$. Note that 
\be
  X_{\dwt y \circ \dwt x} = X_{\dwt y} \otimes_{A_b} X_{\dwt x} 
  \qquad \text{for} \quad
  c \xleftarrow{\dwt y} b \xleftarrow{\dwt x} a \ .
\ee
Let $\chi$ be an element of $D_1^{(n)} / C_n$ as in \eqref{eq:j-definition}. Let $\dwt x \in D_1^{(n)}$ be a representative of $\chi$, i.e.\ $\chi = [\dwt x]$, and let $\mathcal{O}_\chi = C_n.\dwt x$ be the $C_n$ orbit of $\dwt x$ in $D_1^{(n)}$ (which is independent of the choice of $\dwt x$). Let 
\be
  \pi : J_\chi \to \mathcal{O}_\chi
\ee
be the vector bundle (with discrete base) whose fibres are given by the dual vector space
\be
  \pi^{-1}(\dwt y) = \Hom_k( \cytens{A_{s(\dwt y)=t(\dwt y)}} X_{\dwt y},k) \ ,
\ee
where $\Hom_k(U,V)$ stands for the space of linear maps from $U$ to $V$. Since $\dwt y \in \bfD(s(\dwt y),t(\dwt y))$ is cyclically composable by assumption, we indeed have $s(\dwt y) = t(\dwt y)$. An element $\sigma$ of the cyclic group $C_n$ acts on $J_\chi$ by taking a vector $\varphi \in \pi^{-1}(\dwt y)$ to $\varphi \circ \sigma^{-1} \in \pi^{-1}(\sigma(\dwt y))$. By abuse of notation, here we also denoted by $\sigma$ the linear isomorphism $\cytens{A_{s(\dwt y)}} X_{\dwt y} \to \cytens{A_{s(\sigma \dwt y)}} X_{\sigma \dwt y}$ obtained by shifting tensor factors. Denote by $\Gamma(J_\chi)^\mathrm{inv}$ the space of $C_n$-invariant sections of the bundle $J_\chi$. The value of the section at $\dwt y$ is then invariant under the action of the stabiliser of $\dwt y$ in $C_n$.

For example, if $\chi = [\dwt x]$ is such that all $(x_i,\eps_i)$ in $\dwt x$ are mutually distinct, then the orbit has lengths $n$, all stabilisers are trivial, and $\Gamma(J_\chi)^\mathrm{inv}$ is isomorphic to any one of the fibres of $J_\chi$. If all $(x_i,\eps_i)$ are identical, the orbit has length one and $\Gamma(J_\chi)^\mathrm{inv}$ consists of the $C_n$ invariant vectors in $\Hom_k(\cytens{A_{s(\dwt x)}} X_{\dwt x},k)$.

With this preparation, we can finally state the third piece of data for the lattice TFT construction.
\begin{enumerate}
\item[3.] For each $u \in D_0$, a vector $\varphi_u \in \Gamma(J_{j(u)})^\mathrm{inv}$.
\end{enumerate}
This complicated construction will later ensure that the junction-condition is unchanged under `rotations which leave the attached domain walls invariant', or in other words, it has no preferred `starting edge'.

\subsection{Functor on bordisms with cell decomposition} \label{sec:T^cw-construct}

\begin{figure}[tb] 
\begin{center}
\raisebox{-47pt}{\begin{picture}(120,115)
  \put(0,0){\scalebox{.90}{\includegraphics{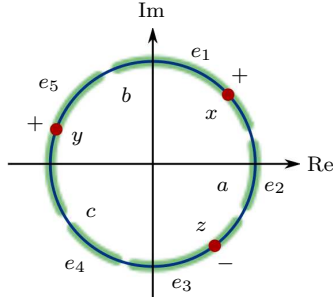}}}
  \put(0,0){
     \setlength{\unitlength}{.90pt}\put(-239,-358){
     \put(333,449)   {\scriptsize $+$ }
     \put(327,370)   {\scriptsize $-$ }
     \put(247,429)   {\scriptsize $+$ }
     \put(365,411)   {\scriptsize Re }
     \put(294,476)   {\scriptsize Im }
     \put(322,433)   {\scriptsize $x$ }
     \put(318,386)   {\scriptsize $z$ }
     \put(266,424)   {\scriptsize $y$ }
     \put(287,440)   {\scriptsize $b$ }
     \put(327,403)   {\scriptsize $a$ }
     \put(272,392)   {\scriptsize $c$ }
     \put(316,459)   {\scriptsize $e_1$ }
     \put(347,403)   {\scriptsize $e_2$ }
     \put(308,362)   {\scriptsize $e_3$ }
     \put(263,370)   {\scriptsize $e_4$ }
     \put(252,446)   {\scriptsize $e_5$ }
  }}
\end{picture}}
\end{center}
\vspace*{-1em}
\caption{Assignment of vector spaces $R_e$ to edges $e \in C_1(O)$: 
The figure shows a circle with three domains labelled $a,b,c \in D_2$ and three marked points labelled $x,y,z \in D_1$. The circle is decomposed into 5 edges, and the corresponding vector spaces are $R_{e_1} = X_x$, an $A_b$-$A_a$-bimodule; $R_{e_2} = A_a$; $R_{e_3} = X_z^*$, an $A_a$-$A_c$-bimodule (while $X_z$ itself is an $A_c$-$A_a$-bimodule); $R_{e_4} = A_c$; $R_{e_5} = X_y$, an $A_c$-$A_b$-bimodule. 
}
\label{fig:object+morph-to-vsp-for-Tcw}
\end{figure}

Fix $D_i$, $i=0,1,2$ and the data described in the previous subsection. We proceed to define a symmetric monoidal functor 
\be
  \Tcw : \bord_{2,1}^\mathrm{def,top,cw} \longrightarrow \vect_f(k) \ .
\ee
The action of $\Tcw$ on objects is as follows. Denote by $O$ an object of the bordism category $\bord_{2,1}^\mathrm{def,top,cw}$ consisting of a single $S^1$. Recall that $C_1(O)$ is the set of 1-cells (i.e.\ edges) of the cell decomposition of $O$. To each edge $e \in C_1(O)$ we assign the vector space
\be \label{eq:Re-def}
  R_e = \begin{cases} 
    A_a &; \text{ $e$ contains no marked point and carries label $a \in D_2$,} \\
    X_x^{\eps} &; \text{ $e$ contains a marked point with orientation $\eps$ and label $x\in D_1$,} 
  \end{cases}
\ee
see figure \ref{fig:object+morph-to-vsp-for-Tcw} for an illustration. We set\footnote{
  Here, the tensor product stands for the tensor product over $k$ of a family of vector spaces indexed by some set $I$ (here $C_1(O)$). To define this tensor product it is not necessary to choose an ordering of $I$, i.e.\ a preferred way to write out the tensor product in a linear order. The same applies to similar tensor products below.
}
\be\label{eq:Tcw-on-single-S1}
 \Tcw(O) \, = \!\!\bigotimes_{e \in C_1(O)}\!\! R_e \ .
\ee
For an object $U = O_1 \sqcup O_2 \sqcup \cdots \sqcup O_n$ we take\footnote{
  Of course we could have taken the definition \eqref{eq:Tcw-on-single-S1} also for a general object $U$ instead of writing $\Tcw(U)$ as a tensor product with implied ordering of the factors. However, in this way it is easier to see that the functor is symmetric.}
$\Tcw(U) = \Tcw(O_1) \otimes \cdots \otimes \Tcw(O_n)$.

\medskip

There are two types of morphisms in the bordism category: permutations of objects, and bordisms. A permutation $\sigma : U \to \sigma(U)$ is mapped by $\Tcw$ to the corresponding permutation of tensor factors. The description of $\Tcw$ for bordisms will take a little while. Given a bordism $M : U \to V$ (we assume that a representative of the equivalence class has been chosen and use the same symbol), we will write the functor as a composition of two linear maps
\be \label{eq:Tcw-as-two-maps}
  \Tcw(M) ~:~ \Tcw(U) \xrightarrow{\id_{\Tcw(U)} \otimes P(M)} \Tcw(U) \otimes Q(M) \otimes \Tcw(V) \xrightarrow{E(M) \otimes \id_{\Tcw(V)}} \Tcw(V) \ .
\ee
We will now describe the vector space $Q(M)$, and the maps $P(M)$ (`propagator') and $E(M)$ (`evaluation').

\medskip

\begin{figure}[tb] 
\begin{center}
\raisebox{50pt}{a)} \hspace{.5em}
\raisebox{-63pt}{\begin{picture}(128,128)
  \put(0,0){\scalebox{.7}{\includegraphics{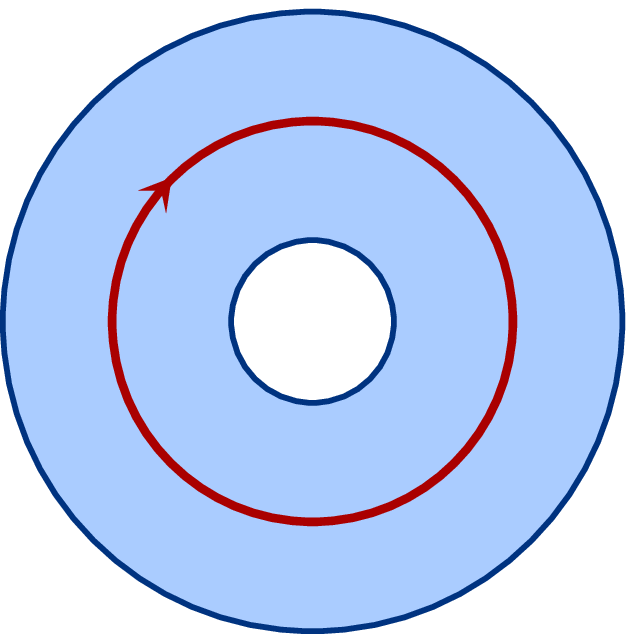}}}
  \put(0,0){
     \setlength{\unitlength}{.7pt}\put(-211,-324){
     \put(242,445)   {\scriptsize $x$ }
     \put(280,482)   {\scriptsize $t(x)=b$ }
     \put(280,450)   {\scriptsize $s(x)=a$ }
     \put(302,400)   {\scriptsize in }
     \put(356,332)   {\scriptsize out }
  }}
\end{picture}}
\hspace{5em}
\raisebox{50pt}{b)} \hspace{.5em}
\raisebox{-63pt}{\begin{picture}(128,128)
  \put(0,0){\scalebox{.74}{\includegraphics{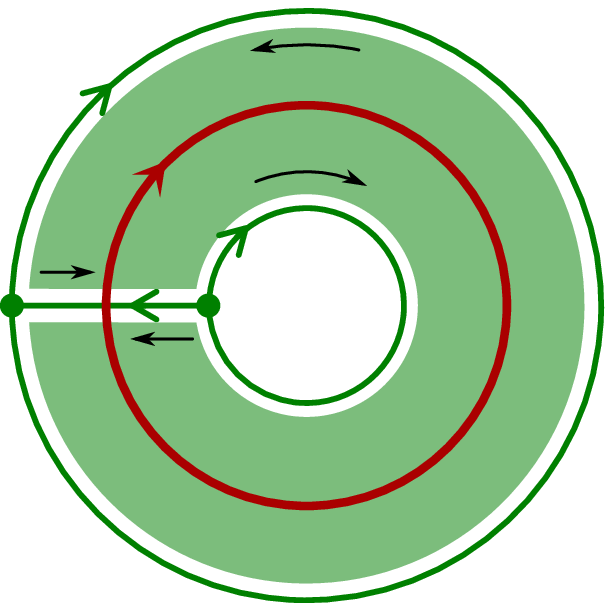}}}
  \put(0,0){
     \setlength{\unitlength}{.74pt}\put(-215,-328){
     \put(242,445)   {\scriptsize $x$ }
     \put(338,433)   {\scriptsize $a$ }
     \put(362,439)   {\scriptsize $b$ }
     \put(288,428)   {\scriptsize $e_\alpha$ }
     \put(226,481)   {\scriptsize $e_ \gamma$ }
     \put(228,403)   {\scriptsize $e_ \beta $ }
     \put(247,364)   {\scriptsize $p$ }
  }}
\end{picture}}
\end{center}
\vspace*{-1em}
\caption{Denote by $O(a)$ the circle labelled by $a \in D_2$; figure a) shows a bordism $A(x) : O(a) \to O(b)$ with a single circular domain wall labelled $x$. Figure b) gives a possible cell decomposition of $A(x)$ into a 2-cell $p$, 1-cells $e_\alpha$, $e_\beta$, $e_\gamma$ and two unnamed 0-cells. The edges are oriented for convenience, so that we can describe the two possible orientations simply by $\pm$ (the orientation is not part of the data of the cell decomposition); the orientation of the boundary of $p$ induced by the orientation of $A(x)$ is indicated by the arrows placed in $p$. There are 4 allowed triples: $(p,e_\alpha,+)$, $(p,e_\beta,+)$, $(p,e_\gamma,-)$, $(p,e_\beta,-)$. The corresponding vector spaces are 
$Q_{p,e_\alpha,+} = A_a$,
$Q_{p,e_\beta,+} = X_x^*$, 
$Q_{p,e_\gamma,-} = A_b$, 
$Q_{p,e_\beta,-} = X_x$. 
As $e_\alpha$ lies on $\partial_\text{in} A(x)$, we have $Q(A(x)) = Q_{p,e_\beta,+} \otimes Q_{p,e_\gamma,-} \otimes Q_{p,e_\beta,-}$.
}
\label{fig:annulus-cell-decomp}
\end{figure}

We start with the vector space $Q(M)$.
Denote by $\partial_\text{in}M$ the in-going part of the boundary of $M$, that is, the part parametrised by $U$, and by $\partial_\text{out}M$ the out-going part of the boundary of $M$, parametrised by $V$.

Consider triples $(p,e,\mathrm{or})$, where $p \in C_2(M)$ is a 2-cell (i.e.\ an open polygon), $e \in C_1(M)$ is a 1-cell, and `$\mathrm{or}$' is an orientation of $e$. We only allow triples which satisfy the following condition: the orientation of $M$ also orients $p$, and this in turn induces an orientation of the boundary $\partial p$; we demand that $(e,\mathrm{or})$ is part of $\partial p$ as an oriented edge. This is illustrated in figure \ref{fig:annulus-cell-decomp}.
To each allowed triple $(p,e,\mathrm{or})$ we assign a vector space:
\be\label{eq:Q_pe-def}
Q_{p,e,\mathrm{or}} = \begin{cases}
  A_a &; \text{ $(e,\mathrm{or})$ does not intersect $M_1$ and is in a component}\\[-.3em]
           &  \phantom{;}\, \text{ of $M_2$ labelled $a$,} \\
  X_x &; \text{ $(e,\mathrm{or})$ intersects a component of $M_1$ which is labelled $x$}\\[-.3em]
           & \phantom{;}\, \text{ and is oriented into the polygon $p$,} \\
  X_x^* &; \text{ $(e,\mathrm{or})$ intersects a component of $M_1$ which is labelled $x$}\\[-.3em]
           & \phantom{;}\, \text{ and is oriented out of the polygon $p$.} 
\end{cases}
\ee
Here, the pair $(e,\mathrm{or})$ is understood as part of the boundary $\partial p$; this is important if the same edge $e$ occurs twice in $\partial p$, see the example in figure \ref{fig:annulus-cell-decomp}. The vector space $Q(M)$ is given by
\be \label{Q(M)-def}
  Q(M) ~= \hspace{-1em}\bigotimes_{(p,e,\mathrm{or}) ,\, e \notin\partial_\text{in} M} \hspace{-1em}  Q_{p,e,\mathrm{or}} \quad ,
\ee
where the tensor product is taken over all allowed triples $(p,e,\mathrm{or})$ for which $e$ does not lie in $\partial_\text{in} M$.

We now turn to the description of the map $P(M) : k \to Q(M) \otimes \Tcw(V)$. 
Each edge $e \in C_1(M)$ in the interior of $M$ occurs in two allowed triples, let us call them $(p(e)_1,e,\mathrm{or}_1)$ and $(p(e)_2,e,\mathrm{or}_2)$,  where $\mathrm{or}_1$ and $\mathrm{or}_2$ are the two possible orientations of $e$, and $p(e)_i$ is the polygon which contains the oriented edge $(e,\mathrm{or}_i)$ in its boundary. Note that it may happen that $p(e)_1=p(e)_2$, as it does in figure \ref{fig:annulus-cell-decomp}. For each interior edge $e$ define the linear map 
\be \label{eq:Pe-def}
  P_e \,:\, k \longrightarrow Q_{p(e)_1,e,\mathrm{or}_1} \otimes Q_{p(e)_2,e,\mathrm{or}_2} 
\ee  
according to the following two cases.
\begin{enumerate}
\item If $M_1$ does not intersect $e$ then according to \eqref{eq:Q_pe-def} we have $Q_{p(e)_1,e,\mathrm{or}_1} =  Q_{p(e)_2,e,\mathrm{or}_2} = A_a$, where $a$ is the label of the component of $M_2$ containing $e$. We take $P_e = \beta_{A_a}$, with $\beta$ the dual of the Frobenius pairing as in section \ref{sec:alg-prel}. Since $A_a$ has trace paring, $\beta$ is symmetric and the map $P_e$ is independent of the choice of order of $(p(e)_1,e,\mathrm{or}_1)$ and $(p(e)_2,e,\mathrm{or}_2)$. 
\item Suppose a component of $M_1$ labelled by $x$ intersects $e$. Let $u_i$ be a basis of $X_x$ and let $u_i^*$ be the dual basis of $X_x^*$. Choose the numbering `$1$' and `$2$' of  $(p(e)_1,e,\mathrm{or}_1)$ and $(p(e)_2,e,\mathrm{or}_2)$ so that the orientation of the domain wall $M_1$ is such that it points into the polygon $p(e)_1$ at $(e,\mathrm{or}_1)$ and out of the polygon $p(e)_2$ at $(e,\mathrm{or}_2)$. Then $Q_{p(e)_1,e,\mathrm{or}_1} = X_x$ and $Q_{p(e)_2,e,\mathrm{or}_2}=X_x^*$ (see figure \ref{fig:annulus-cell-decomp}) and we set $P_e(\lambda) = \lambda \sum_i u_i \otimes u_i^*$. 
\end{enumerate}
If $e$ is an edge on the out-going boundary $\partial_\text{out} M$, i.e.\ the boundary component parametrised by $V$, then there is exactly one allowed triple which contains $e$. Let $(p,e,\mathrm{or})$ be that triple. The parametrisation identifies $e$ with an edge of $C(V)$ which we also call $e$. In this case $P_e$ is defined as in \eqref{eq:Pe-def}, but with $Q_{p(e)_1,e,\mathrm{or}_1}$ and $Q_{p(e)_2,e,\mathrm{or}_2}$ replaced by $Q_{p(e),e,\mathrm{or}}$ and $R_e$. Comparing \eqref{eq:Re-def} and \eqref{eq:Q_pe-def} (and using the conventions in figure \ref{fig:collar}), one checks that cases 1 and 2 above still apply.

Altogether, the map $P(M) : k \to Q(M) \otimes \Tcw(V)$ is defined as
\be \label{eq:P(M)-def}
  P(M) ~= \hspace{-1em}\bigotimes_{e \in C_1(M),\,e \notin \partial_\text{in} M} \hspace{-1em} P_e \ .
\ee

Finally, we need to define $E(M) : \Tcw(U) \otimes Q(M) \to k$. Note that $\Tcw(U) \otimes Q(M)$ contains one factor $Q_{p,e,\mathrm{or}}$ for each $p \in C_2(M)$ and $(e,\mathrm{or}) \in \partial p$, even if $e \subset \partial M$, such that
\be
  \Tcw(U) \otimes Q(M) ~= \hspace{-1em}\bigotimes_{p \in C_2(M), \, (e,\mathrm{or}) \in \partial p} \hspace{-1em} Q_{p,e,\mathrm{or}} \ .
\ee
For each polygon $p \in C_2(M)$ we define a linear map 
\be \label{eq:Ep-def}
  E_p \,: \hspace{-.5em}\bigotimes_{ (e,\mathrm{or}) \in \partial p }\hspace{-.5em} Q_{p,e,\mathrm{or}} \longrightarrow k \ .
\ee
Fix $p \in C_2(M)$. There are three cases to distinguish, depending on whether $p$ intersects $M_1$ and/or $M_0$. 
\begin{enumerate}
\item 
Suppose $p$ intersects neither $M_1$ nor $M_0$. Let $a$ be the label of the component of $M_2$ containing $p$. 
Choose an edge $(e_1,\mathrm{or}_1) \in \partial p$ and denote by $(e_1,\mathrm{or}_1), (e_2,\mathrm{or}_2), \dots, (e_m,\mathrm{or}_m)$ all oriented edges of $\partial p$ in anti-clockwise ordering. Let further 
\be\label{EM-def-aux1}
  q_1 \otimes q_2 \otimes \cdots \otimes q_m \in Q_{p,e_1,\mathrm{or}_1} \otimes Q_{p,e_2,\mathrm{or}_2} \otimes \cdots \otimes Q_{p,e_m,\mathrm{or}_m} \ .
\ee  
Each $Q_{p,e_i,\mathrm{or}_i}$ is equal to $A_a$ and we set $E_p(q_1 \otimes \cdots \otimes q_m) = \eps_{A_a}(q_1 \cdots q_m)$, where $\eps_{A_a}$ is the counit of $A_a$.
By symmetry of the pairing of the Frobenius algebra $A_a$, the result is independent of the choice of starting edge $(e_1,\mathrm{or}_1)$.
\item 
Suppose $p$ intersects $M_1$ but not $M_0$. In this case there is one oriented edge where $M_1$ leaves $p$, which we take to be $(e_1,\mathrm{or}_1)$. Then we order the oriented edges of $\partial p$ anti-clockwise as in 1. Let $(e_i,\mathrm{or}_i)$ be the edge where $M_1$ enters $p$ and let $x$ be the label of the component of $M_1$ in $p$.
In the notation from \eqref{EM-def-aux1}, we have $q_1 \in X_x^*$, $q_i \in X_x$, and $q_2, \dots, q_{i-1} \in A_{t(x)}$ and $q_{i+1},\dots,q_m \in A_{s(x)}$. We set 
$E_p(q_1 \otimes \cdots \otimes q_m) = q_1\big( (q_2 \cdots q_{i-1}). q_i . (q_{i+1} \cdots q_m) \big)$. Unlike case 1., case 2.\ did not involve an arbitrary choice, and there is no invariance condition to check.
\item 
Suppose $p$ contains a point $u$ from $M_0$ of orientation $\nu_u \in \{\pm\}$ and with label $\hat d_0(u) = t \in D_0$. As in 1., we choose an arbitrary starting edge $(e_1,\mathrm{or}_1) \in \partial p$ and order the remaining edges anti-clockwise. Each edge $e_i$, $i=1,\dots,m$ is transversed by a domain wall. For each $i=1,\dots,m$ we thereby obtain a pair $(x_i,\eps_i)$ where $x_i \in D_1$ is the label of the domain wall crossing $e_i$, and $\eps_i = +$ if this domain wall is oriented into the polygon at $(e_i,\mathrm{or}_i)$ and $\eps_i=-$ otherwise. Let $\dwt x = ((x_1,\eps_1),\dots,(x_m,\eps_m))$. 

If $\nu_u=+$, the labelling has to satisfy $j(t) = [\dwt x] =: \chi$. According to the construction in section \ref{sec:lattice-TFT-data}, $\dwt x \in \mathcal{O}_\chi$. Evaluating the section $\varphi_t \in \Gamma(J_{j(t)})^\mathrm{inv}$ at $\dwt x$ gives an element $\psi \in \Hom_k( \cytens{A_{s(\dwt x)=t(\dwt x)}} X_{\dwt x},k)$. Precomposing with the projection $\pi_\otimes : X_{\dwt x} \to  \cytens{A_{s(\dwt x)=t(\dwt(x)}} X_{\dwt x}$ we obtain a linear form $\psi \circ \pi_\otimes : X_{x_1}^{\eps_1} \otimes \cdots \otimes X_{x_m}^{\eps_m}\to k$. We set $E_p(q_1 \otimes \cdots \otimes q_m) = \psi \circ \pi_\otimes(q_1 \otimes  \cdots \otimes q_m)$. Independence of the choice of $(e_1,\mathrm{or}_1)$ follows since $\Gamma(J_{j(t)})^\mathrm{inv}$ consists of elements invariant under cyclic permutations.

If $\nu_u=-$, the labelling has to satisfy $j(t) = [\dwt x^*]$, and the above construction is repeated with $\dwt x^*$ instead of $\dwt x$.
\end{enumerate}
Figure \ref{fig:annulus-cell-decomp} gives an example of case 2. There, 
$(e_1,\mathrm{or}_1) = (e_\beta,+)$, 
$(e_2,\mathrm{or}_2) = (e_\gamma,-)$, 
$(e_3,\mathrm{or}_3) = (e_\beta,-)$, 
$(e_4,\mathrm{or}_4) = (e_\alpha,+)$, and $E_p(q_1 \otimes q_2 \otimes q_3 \otimes q_4) = q_1(q_2.q_3.q_4)$, where $q_2.q_3.q_4$ is the left/right action of $q_2 \in A_b$ and $q_4 \in A_a$ on $q_3 \in X_x$. 

Altogether, for $E(M)$ we take
\be \label{eq:E(M)-def}
 E(M) = \hspace{-.5em}\bigotimes_{p \in C_2(M)}\hspace{-.5em} E_p \ .
\ee
This completes the definition of $\Tcw$. 

\medskip

Let us briefly illustrate the construction in two related examples;  more examples will be computed in section \ref{sec:example-amplitude}.
For the bordism $A(x) : O(a) \to O(b)$ considered in figure \ref{fig:annulus-cell-decomp}, the composition of maps in \eqref{eq:Tcw-as-two-maps} reads
\be
 \Tcw(A(x)) : 
 \Tcw(O(a)) 
 \xrightarrow{\id \otimes P} 
 R_{e_\alpha} \otimes Q_{p,e_\beta,+} \otimes Q_{p,e_\gamma,-} \otimes Q_{p,e_\beta,-} \otimes R_{e_\gamma}
 \xrightarrow{E \otimes \id} 
 \Tcw(O(b)) \ ,
\ee
where the edge on $O(a)$ is identified with $e_\alpha$ via the parametrisation, and the edge on $O(b)$ with $e_\gamma$. Substituting the definition of these vector spaces and maps gives
\be \label{eq:Tcw-ann-example}
\begin{array}{rccccl}
\Tcw(A(x)) : &
A_a &
\xrightarrow{\id \otimes P}& 
A_a \otimes X_x^* \otimes A_b \otimes X_x \otimes A_b &
\xrightarrow{E \otimes \id} &
A_b
\\[.5em]
& q & 
\longmapsto &
\sum_{i,j} q \otimes u_i^* \otimes b_j' \otimes u_i \otimes b_j &
\longmapsto &
\sum_{i,j} u_i^*(b_j'.u_i.q) \, b_j \ .
\end{array}
\ee
This map can be defined for any two Frobenius algebras with trace pairing $A,B$ and a finite-dimensional $B$-$A$-bimodule $X$. One can check that the image of this map lies in $Z(B)$, and that the kernel of the projector $p_\otimes$ onto $Z(A)$ is contained in the kernel of $T(A(x))$. We therefore lose nothing if we restrict ourselves to $Z(A)$ and $Z(B)$ from the start:
\be
   D(X) : Z(A) \longrightarrow Z(B) 
   \quad  , \quad z \mapsto \sum_{i,j} u_i^*(b_j'.u_i.z) \, b_j \ .
\ee   
This is an example of a defect operator, which we already briefly mentioned in remark \ref{rem:top-def-interesting-for-qft}\,(ii). Such defect operators have some nice properties\footnote{
  These properties are all easily checked directly with the methods of section \ref{sec:alg-prel}. They have also been shown in arbitrary modular categories (instead of just the category $\vect_f(k)$) in \cite[Lem.\,2]{Fuchs:2007vk} and \cite[Lem.\,3.1]{Kong:2007yv}.}:
 if $X \cong X'$ as bimodules, then $D(X) = D(X')$; if $Y$ is a $C$-$B$-bimodule, then $D(Y)D(X) = D(Y \otimes_B X)$; and for the $A$-$A$-bimodule $A$ one has $D(A) = \id_{Z(A)}$.
 
A related example comes from the annulus as in figure \ref{fig:annulus-cell-decomp}, but without the domain wall $x$, so that necessarily $a=b$. The map in \eqref{eq:Tcw-ann-example} specialises to $q \mapsto \sum_{i,j} \langle a_i',a_j' a_i q\rangle \, a_j$. By \eqref{eq:Frob-tracepair-completeness}, this is equal to $q \mapsto \sum_{i} a_i q a_i' = p_\otimes(q)$, cf.\ lemma \ref{lem:ho-hom=ho-cohom}. Thus, $\Tcw$ maps the cylinder over $O(a)$ to the projector onto the centre of $A_a$.

\medskip

It is fairly straightforward to see from the above construction that $\Tcw$ is compatible with composition and tensor products. Since we imposed that permutations of $S^1$-components in an object $U$ of the bordism category get mapped to permutations of tensor factors in $\Tcw(U)$, the functor respects identities and is symmetric.

\subsection{Independence of cell decomposition} \label{sec:T^cw-cell-indep}

In this subsection we abbreviate $\bord^\mathrm{cw} \equiv \bord_{2,1}^\mathrm{def,top,cw}$ and $\bord \equiv \bord_{2,1}^\mathrm{def,top}$. Objects and morphisms in $\bord^\mathrm{cw}$ will be decorated by a tilde (e.g.\ $\tilde U$, $\tilde M$, \dots). Recall the forgetful functor $F : \bord^\mathrm{cw} \to  \bord$ from section \ref{sec:smooth-bord-cat}. We will show that there exists a symmetric monoidal functor $T$ making the diagram \eqref{eq:Tcw-T-comm-diag} commute (consequently, this functor is unique). This will be done in several steps, the key one being the following lemma.

\begin{figure}[tb] 
\raisebox{30pt}{a)} \hspace{-1em}
\raisebox{-14pt}{\begin{picture}(60,40)
  \put(0,0){\scalebox{.6}{\includegraphics{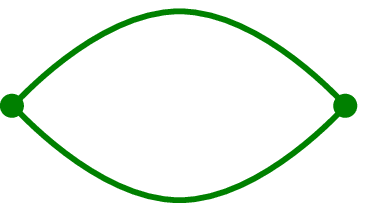}}}
\end{picture}}
$~\longleftrightarrow~$
\raisebox{-14pt}{\begin{picture}(60,40)
  \put(0,0){\scalebox{.6}{\includegraphics{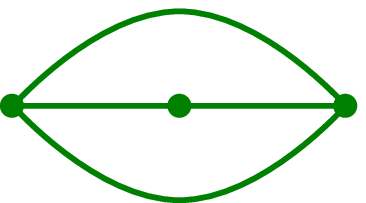}}}
\end{picture}}
\hspace{2em}
\raisebox{30pt}{b)} \hspace{-1em}
\raisebox{-14pt}{\begin{picture}(60,40)
  \put(0,0){\scalebox{.6}{\includegraphics{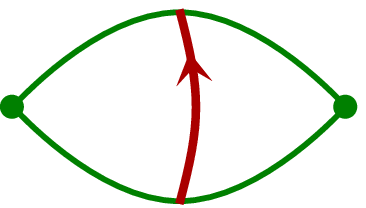}}}
\end{picture}}
$~\longleftrightarrow~$
\raisebox{-14pt}{\begin{picture}(60,40)
  \put(0,0){\scalebox{.6}{\includegraphics{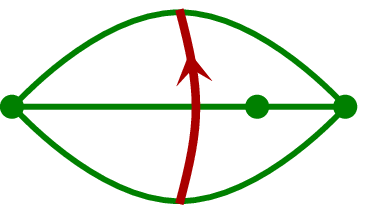}}}
\end{picture}}
$~\longleftrightarrow~$
\raisebox{-14pt}{\begin{picture}(60,40)
  \put(0,0){\scalebox{.6}{\includegraphics{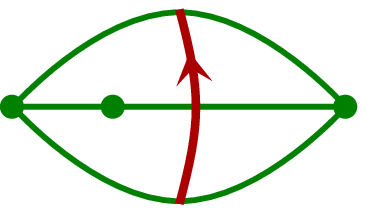}}}
\end{picture}}
\caption{a) A local modification of the cell decomposition which adds an edge and a vertex turning a 2-gon into two triangles, or conversely. The two exterior vertices are allowed to be identical. b) The same in the presence of a domain wall; there are two modifications as the vertex can be added on either side of the domain wall.}
\label{fig:edge-vertex-add}
\end{figure}

\begin{figure}[tb] 
\begin{center}
\raisebox{-32pt}{\begin{picture}(65,75)
  \put(0,0){\scalebox{.7}{\includegraphics{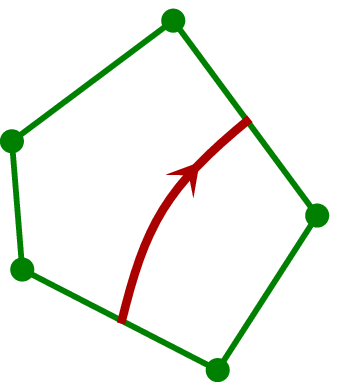}}}
\end{picture}}
$~~\longleftrightarrow~~$
\raisebox{-32pt}{\begin{picture}(65,75)
  \put(0,0){\scalebox{.7}{\includegraphics{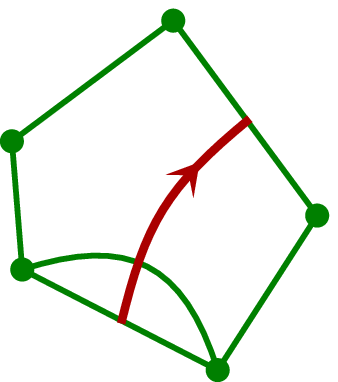}}}
\end{picture}}
$~~\longleftrightarrow~~$
\raisebox{-32pt}{\begin{picture}(65,75)
  \put(0,0){\scalebox{.7}{\includegraphics{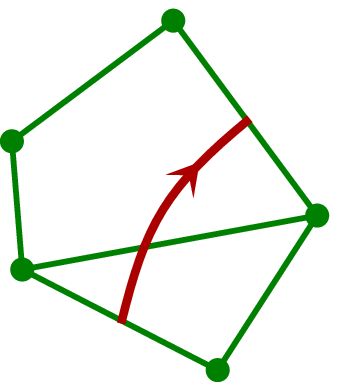}}}
\end{picture}}
$~~\longleftrightarrow~ \cdots ~\longleftrightarrow~~$
\raisebox{-32pt}{\begin{picture}(65,75)
  \put(0,0){\scalebox{.7}{\includegraphics{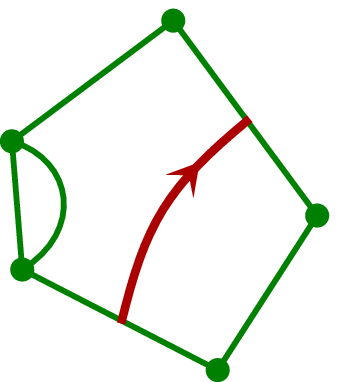}}}
\end{picture}}
\end{center}
\vspace*{-1em}
\caption{A local modification of the cell decomposition which adds a single edge to the interior of a 2-cell. The figure shows an exemplary situation. Alternatively, the 2-cell can be any $n$-gon with $n \ge 2$, the domain wall can run between other edges, or there could be no domain wall at all.}
\label{fig:edge-add}
\end{figure}

\begin{lem} \label{lem:Tcw-local-moves}
Let $\tilde U, \tilde V \in \bord^\mathrm{cw}$ and let  $\tilde M , \tilde M' : \tilde U \to \tilde V$ be morphisms. If $\tilde M'$ is obtained from $\tilde M$ by one of the local modifications of the cell decomposition shown in figures \ref{fig:edge-vertex-add} and \ref{fig:edge-add}, then 
$\Tcw(\tilde M) = \Tcw(\tilde M')$. 
\end{lem}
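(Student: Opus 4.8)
The plan is to exploit that $\Tcw(\tilde M)$ is assembled from purely local data: by \eqref{eq:E(M)-def}, \eqref{eq:P(M)-def} and the definition \eqref{Q(M)-def} of $Q(\tilde M)$, the value $\Tcw(\tilde M)$ is a contraction of the tensors $E_p$ (one per $2$-cell, see \eqref{eq:Ep-def}) and $P_e$ (one per interior or out-going edge, see \eqref{eq:Pe-def}). Each modification in figures \ref{fig:edge-vertex-add} and \ref{fig:edge-add} alters the cell decomposition only inside a single old $2$-cell, or in a pair of $2$-cells sharing the affected edge, so all $E_{p}$ and $P_e$ supported outside a small disc $\Delta$ are literally unchanged and enter the two contractions in exactly the same way. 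Hence it suffices to check that the partial contraction of the $E_p$'s and $P_e$'s living on $\Delta$ -- regarded as a single multilinear map on the edge-spaces $R_e$ and $Q_{p,e,\mathrm{or}}$ along $\partial\Delta$, which are common to $\tilde M$ and $\tilde M'$ -- agrees before and after the move. First I would make this localisation precise by isolating the factors that change; note that none of the moves shown meets $M_0$, so the junction data $\varphi_t$ never enters and only cases $1$ and $2$ of \eqref{eq:Ep-def} occur.

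Then I would run the computation case by case, starting with the generic situation where the affected cells meet neither $M_1$ nor $M_0$. There every internal factor is a copy of a Frobenius algebra $A=A_a$, each $E_p$ multiplies the incident elements and applies $\eps_A$, and each new interior edge contributes $P_e=\beta_A$. Splitting a polygon by a chord (the wall-free version of figure \ref{fig:edge-add}) then amounts to the identity
\be
  \sum_i \eps_A(U a_i')\,\eps_A(a_i W) \;=\; \eps_A(UW) ,
\ee
which is exactly the completeness relation \eqref{eq:Frob-tracepair-completeness} together with symmetry of $\beta$. Adding a vertex on an edge, so that the two sub-edge $A$-factors get multiplied inside the adjacent $E_p$'s, collapses by the same relation followed by $\sum_i a_i a_i'=1$ from \eqref{eq:beta-special-property}. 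The move of figure \ref{fig:edge-vertex-add}\,a) combines an edge-addition and a vertex-addition and is handled by applying both identities in turn; that the two exterior vertices may coincide, and that each $E_p$ involves an arbitrary starting edge, is harmless because $\eps_A$ is symmetric, so the cyclic word it evaluates is well defined.

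Next come the moves in the presence of a domain wall $x$, i.e.\ figure \ref{fig:edge-vertex-add}\,b) and the wall-crossing versions of figure \ref{fig:edge-add}, where I would distinguish whether the newly added edge is crossed by $M_1$. If it is not, one side of the chord carries the bimodule factor $X_x$ or $X_x^*$ and the other is pure $A_{s(x)}$- or $A_{t(x)}$-algebra, so the gluing reduces to the Frobenius computation above, now using the compatibility of $\eps_A$ with the one-sided module action built into case $2$ of \eqref{eq:Ep-def}. If the new edge \emph{is} crossed by $M_1$, then $P_e=\sum_i u_i\otimes u_i^*$ for a basis $u_i$ of $X_x$; the wall leaves $p_1$ through this edge (giving $X_x^*$) and enters $p_2$ through it (giving $X_x$), and writing $w\in X_x$ for the vector produced inside $p_1$ the contraction
\be
  \sum_i u_i^*(w)\cdot\big[\,u_i \text{ fed into } E_{p_2}\,\big]
  \;=\; \big[\,w \text{ fed into } E_{p_2}\,\big]
\ee
collapses by the dual-basis completeness $\sum_i u_i^*(w)\,u_i=w$, after which the residual $A$-actions on the two sides of $w$ recombine into the single evaluation prescribed by $E_p$ in the unsplit cell.

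The main obstacle I anticipate is not any individual algebraic identity -- each is a one-line consequence of the Frobenius structure of section \ref{sec:alg-prel} -- but the orientation and ordering bookkeeping: one must verify that the anticlockwise boundary orderings defining the $E_p$, the induced orientations of the new edge seen from the two new cells, and the left/right assignment of the module actions across $M_1$ all line up so that the elements are genuinely multiplied (or evaluated) in the order the above identities demand. I would therefore fix the anticlockwise convention of \eqref{eq:Ep-def} once and for all and carefully track the orientation $\mathrm{or}$ of the shared edge in the two triples $(p_1,f,\mathrm{or}_1)$ and $(p_2,f,\mathrm{or}_2)$ via \eqref{eq:Q_pe-def}; with this in place every instance reduces to one of the two displayed collapses, yielding $\Tcw(\tilde M)=\Tcw(\tilde M')$.
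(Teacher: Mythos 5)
Your proposal is correct and follows essentially the same route as the paper: localise the change to the affected cells, then show that the partial contraction of the local tensors $E_p$ and $P_e$ is unchanged, using the Frobenius completeness relation \eqref{eq:Frob-tracepair-completeness}, the identity \eqref{eq:beta-special-property}, and dual-basis contraction for the bimodule factors $X_x$, $X_x^*$. The paper simply carries out two representative instances (figure \ref{fig:edge-vertex-add-example}) in full detail and declares the remaining cases analogous, whereas you describe the uniform pattern covering all cases; the substance is the same.
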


\begin{figure}[tb] 
\begin{center}
\begin{tabular}{ll}
\raisebox{40pt}{a)} 
\raisebox{-48pt}{\begin{picture}(144,105)
  \put(0,10){\scalebox{.8}{\includegraphics{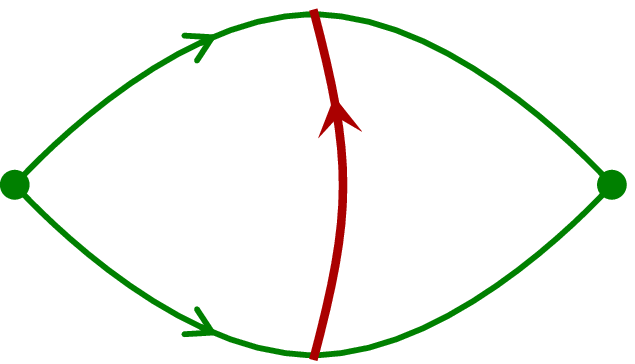}}}
  \put(0,10){
     \setlength{\unitlength}{.8pt}\put(-210,-364){
     \put(296,388)   {\scriptsize $x$ }
     \put(315,442)   {\scriptsize $a$ }
     \put(290,442)   {\scriptsize $b$ }
     \put(233,449)   {\scriptsize $e_1$ }
     \put(233,378)   {\scriptsize $e_2$ }
     \put(270,410)   {\scriptsize $p$ }
     \put(294,355)   {\scriptsize $X_x$ }
     \put(294,472)   {\scriptsize $X_x^*$ }
  }}
\end{picture}} \hspace{4em}
&
\raisebox{40pt}{b)} 
\raisebox{-48pt}{\begin{picture}(144,105)
  \put(0,10){\scalebox{.8}{\includegraphics{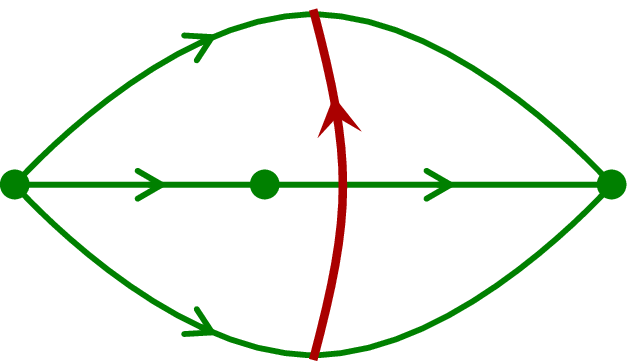}}}
  \put(0,10){
     \setlength{\unitlength}{.8pt}\put(-210,-364){
     \put(296,388)   {\scriptsize $x$ }
     \put(315,442)   {\scriptsize $a$ }
     \put(290,442)   {\scriptsize $b$ }
     \put(233,449)   {\scriptsize $e_1$ }
     \put(233,378)   {\scriptsize $e_2$ }
     \put(236,407)   {\scriptsize $e_3$ }
     \put(351,407)   {\scriptsize $e_4$ }
     \put(270,441)   {\scriptsize $p_1$ }
     \put(270,388)   {\scriptsize $p_2$ }
     \put(315,422)   {\scriptsize $X_x$ }
     \put(315,403)   {\scriptsize $X_x^*$ }
     \put(294,355)   {\scriptsize $X_x$ }
     \put(294,472)   {\scriptsize $X_x^*$ }
     \put(259,403)   {\scriptsize $A_b$ }
     \put(259,422)   {\scriptsize $A_b$ }
  }}
\end{picture}}
\\[5em]
\raisebox{50pt}{c)}  \hspace{1.5em}
\raisebox{-60pt}{\begin{picture}(110,125)
  \put(0,0){\scalebox{.6}{\includegraphics{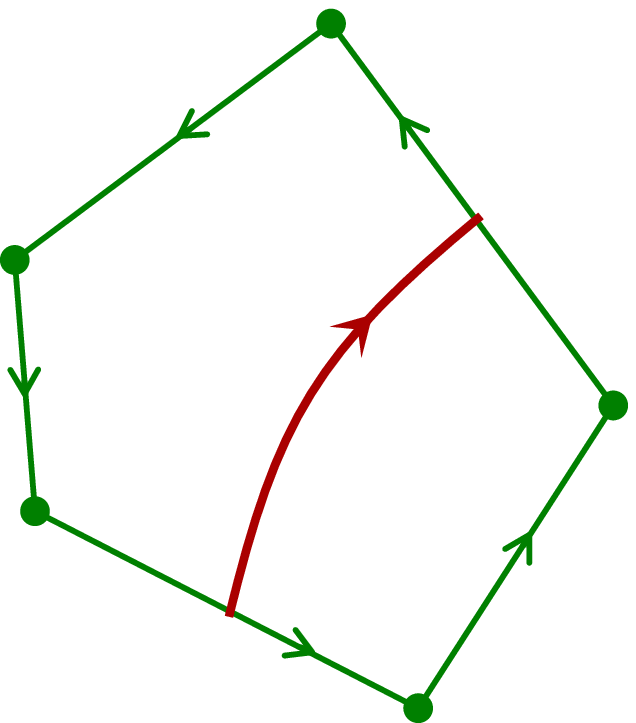}}}
  \put(0,0){
     \setlength{\unitlength}{.6pt}\put(-210,-313){
     \put(245,488)   {\scriptsize $A_b$ }
     \put(195,405)   {\scriptsize $A_b$ }
     \put(362,349)   {\scriptsize $A_a$ }
     \put(266,326)   {\scriptsize $X_x$ }
     \put(353,458)   {\scriptsize $X_x^*$ }
     \put(306,434)   {\scriptsize $x$ }
     \put(329,422)   {\scriptsize $a$ }
     \put(285,436)   {\scriptsize $b$ }
     \put(251,401)   {\scriptsize $p$ }
     \put(312,477)   {\scriptsize $e_1$ }
     \put(280,482)   {\scriptsize $e_2$ }
     \put(221,425)   {\scriptsize $e_3$ }
     \put(299,340)   {\scriptsize $e_4$ }
     \put(347,372)   {\scriptsize $e_5$ }
  }}
\end{picture}}
&
\raisebox{50pt}{d)} \hspace{1.5em}
\raisebox{-60pt}{\begin{picture}(110,125)
  \put(0,0){\scalebox{.6}{\includegraphics{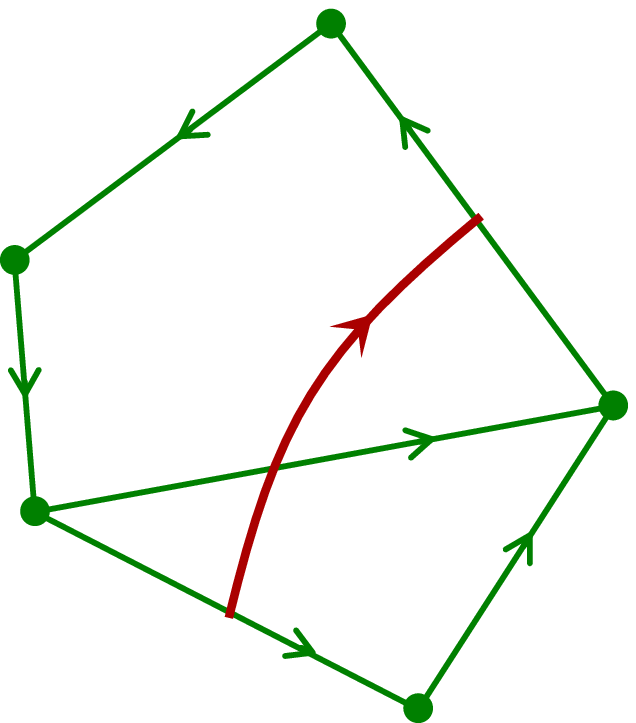}}}
  \put(0,0){
     \setlength{\unitlength}{.6pt}\put(-210,-313){
     \put(245,488)   {\scriptsize $A_b$ }
     \put(195,405)   {\scriptsize $A_b$ }
     \put(362,349)   {\scriptsize $A_a$ }
     \put(266,326)   {\scriptsize $X_x$ }
     \put(353,458)   {\scriptsize $X_x^*$ }
     \put(306,434)   {\scriptsize $x$ }
     \put(329,422)   {\scriptsize $a$ }
     \put(285,436)   {\scriptsize $b$ }
     \put(261,451)   {\scriptsize $p_1$ }
     \put(256,365)   {\scriptsize $p_2$ }
     \put(312,477)   {\scriptsize $e_1$ }
     \put(280,482)   {\scriptsize $e_2$ }
     \put(221,425)   {\scriptsize $e_3$ }
     \put(299,340)   {\scriptsize $e_4$ }
     \put(347,372)   {\scriptsize $e_5$ }
     \put(267,393)   {\scriptsize $X_x$ }
     \put(292,370)   {\scriptsize $X_x^*$ }
     \put(337,405)   {\scriptsize $e_6$ }
  }}
\end{picture}}
\end{tabular}
\end{center}
\vspace*{-1em}
\caption{Two examples. Figures a,b: Adding two edges and a vertex; the vector spaces $Q_{p,e,\mathrm{or}}$ associated to triples $(p,e,\mathrm{or})$ as in section \ref{sec:T^cw-construct} are also shown, e.g.\ $Q_{p_2,e_4,-} = X_x^*$ and $Q_{p_1,e_4,+} = X_x$. Figures c,d: Adding an edge to a 2-cell; also shown are the associated vector spaces.}
\label{fig:edge-vertex-add-example}
\end{figure}

\begin{proof}
We will show the equality $\Tcw(\tilde M) = \Tcw(\tilde M')$ in the two cases displayed in figure \ref{fig:edge-vertex-add-example}, the remaining cases are treated analogously. 

The part of the cell decomposition in figure \ref{fig:edge-vertex-add-example}\,a) contributes the factor $E_p : X_x^* \otimes X_x \to k$, $\varphi \otimes x \mapsto \varphi(x)$ from \eqref{eq:Ep-def} to the map $E$ in \eqref{eq:E(M)-def} and \eqref{eq:Tcw-as-two-maps}. Figure \ref{fig:edge-vertex-add-example}\,b) contributes the factors $E_{p_1} \otimes E_{p_2} : (X_x^* \otimes A_b \otimes X_x) \otimes (X_x^* \otimes A_b \otimes X_x) \to k$, $\varphi \otimes b \otimes x \otimes \varphi' \otimes b' \otimes x' \mapsto \varphi(b.x) \cdot \varphi'(b'.x')$ to $E$, and to $P$ in \eqref{eq:P(M)-def} it contributes the factors $P_{e_3} \otimes P_{e_4} : k \to (A_b \otimes A_b) \otimes (X_x \otimes X_x^*)$, $1 \mapsto \sum_{i,j} b_i' \otimes b_i \otimes u_j \otimes u_j^*$. The composition of $P_{e_3} \otimes P_{e_4} \otimes \id_{X_x^*} \otimes \id_{X_x}$ and $E_{p_1} \otimes E_{p_2}$ (with the appropriate permutation of tensor factors) yields 
\be
  \varphi \otimes x
  \mapsto 
  \sum_{i,j} \varphi(b_i'.u_j) \cdot u_j^*(b_i.x)
  = \sum_{i} \varphi(b_i'.b_i.x) = \varphi(x) = E_p(\varphi \otimes x) \ .
\ee
Thus if $\tilde M$ and $\tilde M'$ differ only in one place as shown in figure \ref{fig:edge-vertex-add-example}\,a,b), we still have $\Tcw(\tilde M) = \Tcw(\tilde M')$.

For figure \ref{fig:edge-vertex-add-example}\,c,d) the argument is the same. The 2-cell $p$ contributes the map $E_p : X_x^* \otimes A_b \otimes A_b \otimes X_x \otimes A_a \to k$, $\varphi \otimes b_1 \otimes b_2 \otimes x \otimes a \mapsto \varphi(b_1.b_2.x.a)$. The two cells $p_1$ and $p_2$ contribute $E_{p_1} \otimes E_{p_2} : (X_x^* \otimes A_b \otimes A_b \otimes X_x) \otimes (X_x^* \otimes X_x \otimes A_a) \to k$, $\varphi \otimes b_1 \otimes b_2 \otimes x' \otimes \varphi' \otimes x \otimes a \mapsto \varphi(b_1.b_2.x') \cdot \varphi'(x.a)$. The new edge $e_6$ gives the map $P_{e_6} : k \to X_x \otimes X_x^*$, $1 \mapsto \sum_j u_j \otimes u_j^*$. Composing the two as $(E_{p_1} \otimes E_{p_2}) \circ (\id \otimes P_{e_6} \otimes \id)$ results in a map
$X_x^* \otimes A_b \otimes A_b \otimes X_x \otimes A_a \to k$ which acts as
\be
\begin{array}{ll}
  \varphi \otimes b_1 \otimes b_2 \otimes x \otimes a ~ \mapsto 
  &\sum_j (E_{p_1} \otimes E_{p_2})( \varphi \otimes b_1 \otimes b_2 \otimes u_j \otimes u_j^* \otimes x \otimes a )
\\[.5em]
  & \quad =
  \sum_j \varphi(b_1.b_2.u_j) \cdot u_j^*(x.a)
  =
  \varphi(b_1.b_2.x.a)
\\[.5em]
  & \quad = E_p( \varphi \otimes b_1 \otimes b_2 \otimes x \otimes a ) \ .
\end{array}
\ee
Hence, if $\tilde M$ and $\tilde M'$ differ only in one place as shown in figure \ref{fig:edge-vertex-add-example}\,c,d), we have $\Tcw(\tilde M) = \Tcw(\tilde M')$.
\end{proof}

\begin{figure}[tb] 
\vspace*{1em}
\begin{center}
\raisebox{-12pt}{\begin{picture}(80,30)
  \put(0,0){\scalebox{.5}{\includegraphics{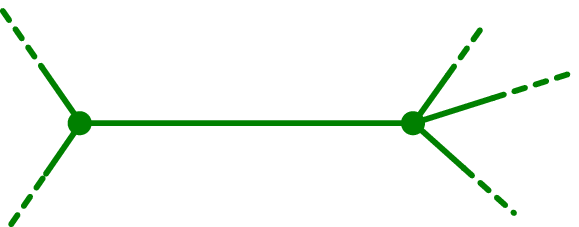}}}
\end{picture}}
$~\underset{\text{fig.\,\ref{fig:edge-add}}}{\longleftrightarrow}$
\raisebox{-12pt}{\begin{picture}(80,30)
  \put(0,0){\scalebox{.5}{\includegraphics{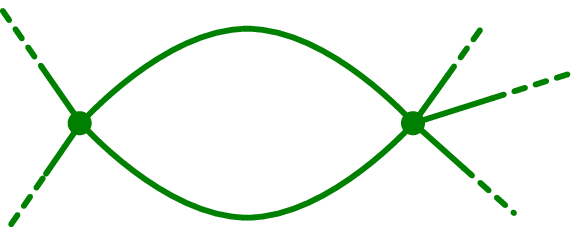}}}
\end{picture}}
$~\underset{\text{fig.\,\ref{fig:edge-vertex-add}}}{\longleftrightarrow}$
\raisebox{-12pt}{\begin{picture}(80,30)
  \put(0,0){\scalebox{.5}{\includegraphics{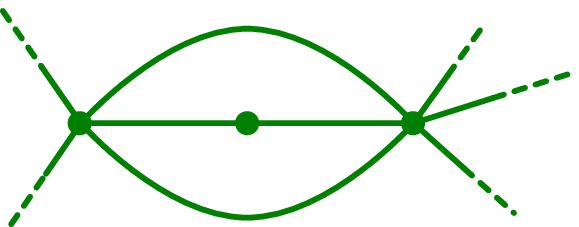}}}
\end{picture}}
$~\underset{\text{2{$\times$}fig.\,\ref{fig:edge-add}}}{\longleftrightarrow}$
\raisebox{-12pt}{\begin{picture}(80,30)
  \put(0,0){\scalebox{.5}{\includegraphics{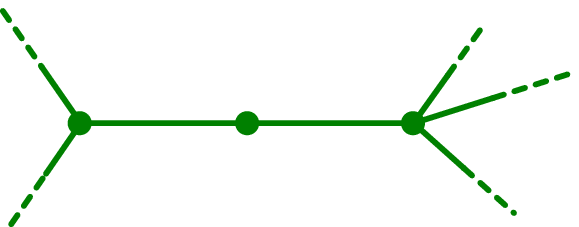}}}
\end{picture}}
\end{center}
\vspace*{-1em}
\caption{The moves in figures \ref{fig:edge-vertex-add} and \ref{fig:edge-add} allow one to split an edge by adding a new vertex.}
\label{fig:split-edge-new-vertex}
\end{figure}

One immediate consequence of the above lemma is that we can insert new vertices on edges which do not belong to the boundary of $M$ via the sequence of moves in figure \ref{fig:split-edge-new-vertex}. (The cell-decomposition of the boundary is fixed by the parametrisation in terms of the source and target objects). In the absence of domain walls, the `elementary subdivisions' (and their inverses) of a 2-cell (figure \ref{fig:edge-add}) and of a 1-cell (figure \ref{fig:split-edge-new-vertex}) have been shown in \cite{Kirillov:2010a} to relate any two cell decompositions (more precisely: two PLCW-decompositions of a compact polyhedron, see \cite{Kirillov:2010a} for details). The next lemma extends this to cell decomposition in the presence of domain walls and junctions; we will only sketch its proof.

\begin{lem} \label{lem:Tcw-cell-indep-fixed-bnd}
Let $\tilde U, \tilde V \in \bord^\mathrm{cw}$ and let  $\tilde M, \tilde M' : \tilde U \to \tilde V$ be two morphisms such that $F(\tilde M) = F(\tilde M')$. Then $\Tcw(\tilde M) = \Tcw(\tilde M')$. 
\end{lem}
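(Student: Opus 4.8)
The plan is to deduce the statement from Lemma~\ref{lem:Tcw-local-moves} by exhibiting a finite chain of admissible local moves that connects the two cell decompositions, so that $\Tcw$ is constant along the chain.

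First I would remove the diffeomorphism ambiguity. Since $F(\tilde M)=F(\tilde M')$, the underlying defect bordisms are equivalent in $\bord$, so there is a diffeomorphism $\phi$ between representatives preserving orientation, the stratification $M=M_2\cup M_1\cup M_0$, all labels, and the distinguished boundary points. Pulling back the cell decomposition of $\tilde M'$ along $\phi$, we may assume we have a single defect bordism $M$ carrying two admissible cell decompositions $C$ and $C'$. Because the decomposition of $\partial M$ is fixed by the common source and target objects $\tilde U,\tilde V$ (the marked points of $U_0,V_0$ and the wall endpoints leave no freedom), the decompositions $C$ and $C'$ agree on $\partial M$.

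The core step is to connect $C$ and $C'$ by admissible moves from figures~\ref{fig:edge-vertex-add} and~\ref{fig:edge-add}. Fix a regular neighbourhood $N$ of the defect graph $G=M_0\cup M_1$, so that $M\setminus N$ carries no defect. I would first bring both $C$ and $C'$, using the subdivision moves together with the wall-adapted moves of figure~\ref{fig:edge-vertex-add}b), into a standard form on $N$: a single star-shaped 2-cell around each junction as in figure~\ref{fig:allowed-cells}a), and a chain of once-crossed 2-cells along each wall arc as in figure~\ref{fig:allowed-cells}b). For a junction with a prescribed cyclic pattern of incident walls the combinatorial type of such a star is rigid, so after this reduction the restrictions of $C$ and $C'$ to $N$ are combinatorially identical and can be matched by an isotopy. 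On the defect-free piece $M\setminus N$, with the now-common decomposition on the boundary of $M\setminus N$ held fixed, Kirillov's theorem \cite{Kirillov:2010a} applies verbatim: the two PLCW decompositions are related by elementary subdivisions of 2-cells (figure~\ref{fig:edge-add}) and of edges (figure~\ref{fig:split-edge-new-vertex}), and since no defect is present every intermediate decomposition is automatically admissible. Concatenating the reduction on $N$ with the Kirillov chain on $M\setminus N$ connects $C$ to $C'$, and Lemma~\ref{lem:Tcw-local-moves} applied to each move yields $\Tcw(\tilde M)=\Tcw(\tilde M')$.

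The main obstacle is precisely the reduction to standard form on $N$: one must check that the listed moves suffice to bring any admissible decomposition to the rigid star-and-tube pattern \emph{without ever leaving the admissible class}, i.e.\ keeping at most one wall-crossing per edge, at most one junction per 2-cell, and the star and segment shapes of figure~\ref{fig:allowed-cells}. The delicate point is the neighbourhood of a junction where several walls meet, since there the junction-containing 2-cell cannot be subdivided and must be manoeuvred into the standard star by the defect-compatible moves alone; this is the part the proof only sketches.
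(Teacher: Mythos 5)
Your proposal is correct and follows essentially the same route as the paper's own (sketched) proof: reduce to a single underlying bordism with two decompositions agreeing on the boundary, handle the junction 2-cells by their combinatorial rigidity, normalise the decompositions along a neighbourhood of the domain walls using the moves of Lemma \ref{lem:Tcw-local-moves}, and invoke Kirillov's result on the remaining defect-free piece. The only cosmetic difference is that the paper passes to a common refinement of the strings of triangles along each wall rather than to a standard form, and it too leaves the wall/junction normalisation step at the level of a sketch, flagging the same delicate point you identify.
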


\begin{proof}[Sketch of proof]

A 2-cell containing a junction is by construction the same in all cell decompositions, so we will remove such 2-cells from $\tilde M$ and $\tilde M'$ and treat their boundary edges as additional boundary components for the remaining cell complexes. 

Next use the moves in figures \ref{fig:edge-add} and \ref{fig:split-edge-new-vertex} to refine the cell decomposition of $\tilde M$ and $\tilde M'$ to a triangulation. The same moves allow us to make the part of the triangulation touched by the domain walls in $\tilde M$ and $\tilde M'$ agree: each component of the domain wall submanifold $M_1$ defines a string of triangles in $\tilde M$ and $\tilde M'$ and one can pass to a common refinement. (To achieve this refinement it is allowed to change the triangulation away from the domain walls.) We now remove all triangles containing domain walls from $\tilde M$ and $\tilde M'$, giving rise to yet more boundary components. 

The triangulations of $\tilde M$ and $\tilde M'$ still remaining do no longer contain any domain walls or junctions, and the standard proof of triangulation independence applies (see \cite{Kirillov:2010a} or just check that  figures \ref{fig:edge-add} and \ref{fig:split-edge-new-vertex} imply invariance under the Pachner moves). 
\end{proof}

The next step is to study the behaviour of $\Tcw$ on preimages of cylinders under $F$. For $U \in \bord$ denote by $C_U : U \to U$ the morphism given by the cylinder over $U$. That is, $C_U = U \times [-1,1]$ with decomposition induced by that of $U$ via $C_U = (C_U)_2 \cup (C_U)_1 \cup (C_U)_0$ with $(C_U)_i = U_{i-1} \times [-1,1]$, $i=2,1$ and $(C_U)_0=\emptyset$. Orientations and labellings are induced by $U$ as well. Note that because morphisms of $\bord$ are diffeomorphism classes, we have
\be
  C_U \circ C_U = C_U 
\ee
as morphisms in $\bord$. Now pick objects $\tilde U, \tilde U'$ with $F(\tilde U) = F(\tilde U') = U$ and a morphism $\tilde M : \tilde U \to \tilde U'$ with $F(\tilde M) = C_U$. Define
\be \label{eq:zeta_UU-def}
  \zeta_{\tilde U',\tilde U} := \Tcw(\tilde M) : \Tcw(\tilde U) \longrightarrow \Tcw(\tilde U') \ .
\ee
By lemma \ref{lem:Tcw-cell-indep-fixed-bnd}, $\zeta_{\tilde U,\tilde U'}$ is independent of $\tilde M$. As a consequence, given another preimage $\tilde U''$ of $U$, we have
\be \label{eq:zeta.zeta=zeta}
  \zeta_{\tilde U'',\tilde U'} \circ \zeta_{\tilde U',\tilde U} = \zeta_{\tilde U'',\tilde U} \ .
\ee
For an object $O \in \bord$ consisting of a single component $S^1$ write $\dwt x(O)$ for the list $((x_1,\eps_1),\dots,(x_n,\eps_n))$ of the marked points on $O$ together with their orientation, ordered clockwise starting from the point $-1 \in S^1$. If $\tilde O$ is a preimage of $O$, denote by $e_1,\dots, e_m$ the 1-cells, again ordered clockwise starting from $-1 \in S^1$. For example, figure \ref{fig:object+morph-to-vsp-for-Tcw} shows a preimage $\tilde O$ with $m\,{=}\,5$ 1-cells, for $O$ a circle with $\dwt x(O) = ((y,+),(x,+),(z,-))$ so that $n\,{=}\,3$.
Consider the maps
\be \label{eq:e-tildeS-def}
  e(\tilde O) \,:= \Big( \cytens{A_{n,1}}X_{\dwt x(O)} 
  \xrightarrow{~\sim~}
  ~\cytens{A_{m,1}} R_{e_1} \otimes_{A_{1,2}} R_{e_2} \otimes_{A_{2,3}} \cdots \otimes_{A_{m-1,m}}  R_{e_m}
  \xrightarrow{e_\otimes}
  \Tcw(\tilde O) \, \Big)
\ee
and
\be\label{eq:pi-tildeS-def}
  \pi(\tilde O) \, := \Big( \,
  \Tcw(\tilde O) 
  \xrightarrow{\pi_\otimes}
  ~\cytens{A_{m,1}} R_{e_1} \otimes_{A_{1,2}} R_{e_2} \otimes_{A_{2,3}} \cdots \otimes_{A_{m-1,m}}  R_{e_m}
  \xrightarrow{~\sim~}
  \cytens{A_{n,1}}X_{\dwt x(O)}  \Big)
\ee
Here, $X_{\dwt x}$ is the notation introduced in \eqref{eq:X-dwtx-def}, $R_e$ was defined in \eqref{eq:Re-def} and the intermediate algebras $A_{i,i+1}$ are as required by the bimodules. By \eqref{eq:Tcw-on-single-S1}, $\Tcw(\tilde O)$ consists precisely of the tensor factors $R_{e_1} \otimes \cdots  \otimes R_{e_m}$, and the maps $e_\otimes, \pi_\otimes$ are as in \eqref{eq:er-def-multiple-tensor}. 

\begin{lem} \label{lem:e-r-and-Tcw-relation}
Let $\tilde O$, $\tilde O'$ in $\bord^\mathrm{cw}$ be preimages of $O$. Then 
$\pi(\tilde O) \circ e(\tilde O) = \id_{\cytens{A_{n,1}} X_{\dwt x(O)}}$
and 
$e(\tilde O') \circ \pi(\tilde O) = \zeta_{\tilde O',\tilde O}$.
\end{lem}

\begin{proof}
The first equality is the defining property of the maps $\pi_\otimes$ and $e_\otimes$ in \eqref{eq:er-def-multiple-tensor}. 

Let $\tilde C_{\tilde U}$ be the cylinder over $\tilde U$ obtained by equipping $C_U$ with the cell decomposition induced by that of $\tilde U$: each edge $e$ of $\tilde U$ gets extended to the square 2-cell $e \times [-1,1]$. If we apply the functor $\Tcw$ to $\tilde C_{\tilde O}$, a short calculation starting from the definition of $\Tcw$ (illustrated in the first example in section \ref{sec:example-amplitude} below) shows that
\be \label{eq:e-pi-prop-proof-aux2}
  \zeta_{\tilde O,\tilde O} = p_\otimes \ ,
\ee
where $p_\otimes$ is the idempotent on $X_{x_1}^{\eps_1} \otimes \cdots \otimes X_{x_n}^{\eps_n}$ whose image is $\cytens{A_{n,1}}X_{\dwt x(O)}$. Below we will furthermore check that 
\be \label{eq:eq:e-pi-prop-proof-aux3}
  \pi(\tilde O') \circ \zeta_{\tilde O',\tilde O} \circ e(\tilde O) = \id_{\cytens{A_{n,1}} X_{\dwt x(O)}} \ .
\ee
Composing this from the left with $e(\tilde O')$ and from the right with $\pi(\tilde O)$, and using $e_\otimes \circ \pi_\otimes = p_\otimes$ together with \eqref{eq:e-pi-prop-proof-aux2} and \eqref{eq:zeta.zeta=zeta}, proves the second equality of the lemma. 

Let us now sketch the proof of \eqref{eq:eq:e-pi-prop-proof-aux3}. We identify $\cytens{A_{n,1}}X_{\dwt x(O)}$ and $\cytens{A_{m,1}} R_{e_1} \otimes_{A_{1,2}} \cdots \otimes_{A_{m-1,m}}  R_{e_m}$ with the images of the corresponding projectors $p_\otimes$ in $X_{x_1}^{\eps_1} \otimes \cdots \otimes X_{x_n}^{\eps_n}$ and
$R_{e_1} \otimes \cdots \otimes R_{e_m} \equiv \Tcw(\tilde O)$.
Let $\sum_i x_1^{(i)} \otimes \cdots \otimes x_n^{(i)}$ be an element of $X_{x_1}^{\eps_1} \otimes \cdots \otimes X_{x_n}^{\eps_n}$ in the image of $p_\otimes$. The first arrow in \eqref{eq:e-tildeS-def} is the isomorphism mapping this to the element
\be 
  v = p_\otimes \circ \Big(\sum_i 
  1_{A_{n,1}} \otimes \cdots \otimes x_1^{(i)} \otimes 1_{A_{1,2}} \otimes \cdots \otimes x_2^{(i)}  \otimes 1_{A_{2,3}} \otimes \cdots \otimes  x_n^{(i)} \otimes  \cdots  \otimes 1_{A_{n,1}} \Big)
\ee
of $R_{e_1} \otimes \cdots \otimes R_{e_m} \equiv \Tcw(\tilde O)$. Here one unit element has been inserted for each factor $R_{e_k}$ for which $e_k$ does not contain a marked point (in this case $R_{e_k} = A_a$ for an appropriate $a$, cf.\ \eqref{eq:Re-def}). One can convince oneself that $\zeta_{\tilde O',\tilde O}$ maps $v$ to an element $v'$ of the same form in $\Tcw(\tilde O)$ (i.e.\ $v'$ has same factors $x_k^{(i)}$ but possibly a different number of factors $1_{A_{k,k+1}}$). We omit the details of this step. The final isomorphism in \eqref{eq:pi-tildeS-def} maps $v'$ back to $\sum_i x_1^{(i)} \otimes \cdots \otimes x_n^{(i)}$.
\end{proof}

In the last step, we define the sought-after functor $T$. On objects $O \in \bord$ with a single $S^1$ component, we set 
\be \label{eq:def-TFT-T-on-S1}
  T(O) := \, \cytens{A_{m,1}} X_{\dwt x(O)} \ .
\ee   
For $U = O_1 \sqcup \cdots \sqcup O_n$, monoidality then requires $T(U) = T(O_1) \otimes \cdots \otimes T(O_n)$. For a bordism $M : U \to V$ in $\bord$ pick a preimage $\tilde M : \tilde U \to \tilde V$ under the forgetful functor. Extend the definition of $e(\tilde O)$ and $\pi(\tilde O)$ to $\tilde U$ by taking tensor products. Define
\be \label{eq:TM-def}
  T(M) := \Big(
  T(U) \xrightarrow{e(\tilde U)} \Tcw(\tilde U) \xrightarrow{\Tcw(\tilde M)} \Tcw(\tilde V) \xrightarrow{\pi(\tilde V)} T(V)
  \Big) \ .
\ee
The first main result of this paper is:

\begin{thm} \label{thm:lattice-defect-TFT}
(i) $T(M)$ is independent of the choice of preimage $\tilde M$ of $M$.
\\[.3em]
(ii) $T(C_U) = \id_{T(U)}$.
\\[.3em]
(iii) $T : \bord_{2,1}^\mathrm{def,top} \to \vect_f(k)$ is a symmetric monoidal functor.
\end{thm}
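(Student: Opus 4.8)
The plan is to prove the three parts in order, each time reducing a statement about $T$ to one about $\Tcw$ that is already controlled by Lemmas \ref{lem:Tcw-cell-indep-fixed-bnd} and \ref{lem:e-r-and-Tcw-relation}. I would use throughout the relations recorded just before the theorem, $\pi(\tilde O)\circ e(\tilde O)=\id_{T(O)}$ and $e(\tilde O)\circ\pi(\tilde O)=\zeta_{\tilde O,\tilde O}$ (Lemma \ref{lem:e-r-and-Tcw-relation}), together with the geometric fact that gluing a cylinder onto a bordism reproduces the same morphism of $\bord$, so $C_V\circ M=M$. Combined with $\zeta_{\tilde V,\tilde V}=\Tcw(\tilde C_{\tilde V})$ (definition \eqref{eq:zeta_UU-def}) and Lemma \ref{lem:Tcw-cell-indep-fixed-bnd}, this lets me insert or delete a cylinder inside $\Tcw$ of a composite without changing its value.

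For (i), let $\tilde M_1:\tilde U_1\to\tilde V_1$ and $\tilde M_2:\tilde U_2\to\tilde V_2$ be two preimages of $M$. If the boundary decompositions agree, then $F(\tilde M_1)=F(\tilde M_2)=M$ gives $\Tcw(\tilde M_1)=\Tcw(\tilde M_2)$ by Lemma \ref{lem:Tcw-cell-indep-fixed-bnd} and equal values of $T(M)$. In general I would interpolate by cylinders: pick $\tilde C_U:\tilde U_1\to\tilde U_2$ and $\tilde C_V:\tilde V_2\to\tilde V_1$ lying over $C_U$, $C_V$, and set $\tilde N:=\tilde C_V\circ\tilde M_2\circ\tilde C_U:\tilde U_1\to\tilde V_1$. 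Then $F(\tilde N)=C_V\circ M\circ C_U=M$ with the same boundary decompositions as $\tilde M_1$, so $\Tcw(\tilde N)=\Tcw(\tilde M_1)$. Expanding $\Tcw(\tilde N)=\zeta_{\tilde V_1,\tilde V_2}\circ\Tcw(\tilde M_2)\circ\zeta_{\tilde U_2,\tilde U_1}$ by functoriality of $\Tcw$ and substituting $\zeta_{\tilde V_1,\tilde V_2}=e(\tilde V_1)\circ\pi(\tilde V_2)$, $\zeta_{\tilde U_2,\tilde U_1}=e(\tilde U_2)\circ\pi(\tilde U_1)$, the value of $T(M)$ from $\tilde M_1$ is
\[
\pi(\tilde V_1)\circ\Tcw(\tilde N)\circ e(\tilde U_1)
=\big(\pi(\tilde V_1)\circ e(\tilde V_1)\big)\circ\pi(\tilde V_2)\circ\Tcw(\tilde M_2)\circ e(\tilde U_2)\circ\big(\pi(\tilde U_1)\circ e(\tilde U_1)\big),
\]
and the two outer brackets collapse to identities, leaving exactly the value from $\tilde M_2$. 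Part (ii) is then immediate: using the cylinder $\tilde C_{\tilde U}$ as a preimage of $C_U$ gives $T(C_U)=\pi(\tilde U)\circ\zeta_{\tilde U,\tilde U}\circ e(\tilde U)=\pi(\tilde U)\circ e(\tilde U)\circ\pi(\tilde U)\circ e(\tilde U)=\id_{T(U)}$.

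For (iii) I would extend \eqref{eq:TM-def} verbatim to permutations (with $\Tcw$ of a permutation the corresponding permutation of tensor factors), well-definedness following as in (i). Identities give $T(\id_U)=\pi(\tilde U)\circ\id\circ e(\tilde U)=\id_{T(U)}$; monoidality and the symmetric structure are formal, since $\Tcw$ is symmetric monoidal and $e,\pi$ are defined factorwise over disjoint unions, so $T$ tensors over components and sends the braiding permutation of $\bord$ to the flip of $\vect_f(k)$. The one substantive axiom is $T(N\circ M)=T(N)\circ T(M)$ for bordisms $M:U\to V$, $N:V\to W$. Choosing preimages $\tilde M:\tilde U\to\tilde V$ and $\tilde N:\tilde V\to\tilde W$ with a common middle $\tilde V$ (possible as $F$ is full and surjective), part (i) gives
\[
T(N\circ M)=\pi(\tilde W)\circ\Tcw(\tilde N)\circ\Tcw(\tilde M)\circ e(\tilde U),\qquad
T(N)\circ T(M)=\pi(\tilde W)\circ\Tcw(\tilde N)\circ\big(e(\tilde V)\circ\pi(\tilde V)\big)\circ\Tcw(\tilde M)\circ e(\tilde U).
\]
The middle insertion equals $\zeta_{\tilde V,\tilde V}=\Tcw(\tilde C_{\tilde V})$, and since $\tilde N\circ\tilde C_{\tilde V}\circ\tilde M$ lies over $N\circ C_V\circ M=N\circ M$ with unchanged boundary decompositions, Lemma \ref{lem:Tcw-cell-indep-fixed-bnd} makes it invisible, so the two expressions agree.

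The hard part is the composition axiom, and within it the single geometric input that gluing a cylinder reproduces the same morphism of $\bord$, which is what lets Lemma \ref{lem:Tcw-cell-indep-fixed-bnd} absorb the idempotent $\zeta_{\tilde V,\tilde V}$ that separates $\Tcw(\tilde N)\circ\Tcw(\tilde M)$ from $T(N)\circ T(M)$. Everything else is bookkeeping with the relations $\pi\circ e=\id$ and $e\circ\pi=\zeta$ of Lemma \ref{lem:e-r-and-Tcw-relation}.
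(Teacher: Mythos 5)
Your proposal is correct and follows essentially the same route as the paper: both rest on Lemma \ref{lem:Tcw-cell-indep-fixed-bnd} together with the relations $\pi\circ e=\id$ and $e\circ\pi=\zeta$ from Lemma \ref{lem:e-r-and-Tcw-relation}, and the composition axiom is handled identically by absorbing $e(\tilde V)\circ\pi(\tilde V)=\zeta_{\tilde V,\tilde V}=\Tcw(\tilde C_{\tilde V})$ into $\Tcw(\tilde N\circ\tilde C\circ\tilde M)$. The only (cosmetic) difference is in part (i), where you glue cylinders on both ends to produce a single comparison bordism $\tilde N$ over $M$, whereas the paper organizes the same computation as a commuting diagram of two triangles and a central square.
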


\begin{proof}
(i) Choose another preimage  $\tilde M' : \tilde U' \to \tilde V'$ in $\bord^\mathrm{cw}$ of $M : U \to V$, and choose preimages $\tilde C_U : \tilde U \to \tilde U'$ and $\tilde C_V : \tilde V \to \tilde V'$ of the cylinder $C_U$ and $C_V$. Consider the diagram
\be
\raisebox{2.3em}{\xymatrix@R=.5em{
& \Tcw(\tilde U) \ar[dd]^{\zeta_{\tilde U',\tilde U}} \ar[rr]^{\Tcw(\tilde M)}
&& \Tcw(\tilde V) \ar[dd]^{\zeta_{\tilde V',\tilde V}} \ar[dr]^{\pi(\tilde V)} 
\\
\cytens{A_{m,1}} X_{\dwt x(U)}
\ar[ur]^{e(\tilde U)} 
\ar[dr]_{e(\tilde U')} 
&&&&
\cytens{A_{n,1}} X_{\dwt x(V)}
\\
& \Tcw(\tilde U') \ar[rr]^{\Tcw(\tilde M')} &&
\Tcw(\tilde V') \ar[ur]_{ \pi(\tilde V')} 
}}
\quad .
\ee
To see that the left triangle commutes, substitute $\zeta_{\tilde U',\tilde U} = e(\tilde U') \circ \pi(\tilde U)$ and use that $\pi(\tilde U) \circ e(\tilde U) = \id$ (lemma \ref{lem:e-r-and-Tcw-relation}). Commutativity of the right triangle follows analogously. The following chain of equalities shows that also the central square commutes:
\be\label{eq:T-indep-of-M-proof}
\begin{array}{ll}
\zeta_{\tilde V',\tilde V} \circ \Tcw(\tilde M) 
&\overset{(1)}= \Tcw(\tilde C_V) \circ \Tcw(\tilde M)
\overset{(2)}= \Tcw(\tilde C_V \circ  \tilde M)
\\[.5em]
&\overset{(3)}= \Tcw( \tilde M' \circ \tilde C_U )
\overset{(4)}= \Tcw(\tilde M') \circ \Tcw(\tilde C_U)
\overset{(5)}= \Tcw(\tilde M') \circ \zeta_{\tilde U',\tilde U} \ .
\end{array}
\ee
Step (1) is the definition of $\zeta_{\tilde V',\tilde V}$ in \eqref{eq:zeta_UU-def}; step (2) is functoriality of $\Tcw$; step (3) follows from lemma \ref{lem:Tcw-cell-indep-fixed-bnd} since $\tilde C_V \circ  \tilde M$ and $ \tilde M' \circ \tilde C_U$ are just different cell decompositions (but identical on the boundary) of the same bordism $\tilde U \to \tilde V'$; steps (4) and (5) are the same as (2) and (1).
Thus the diagram \eqref{eq:T-indep-of-M-proof} commutes, establishing (i).
\\[.3em]
(ii) By definition \eqref{eq:TM-def} and lemma \eqref{lem:e-r-and-Tcw-relation}, $T(C_U) = \pi(\tilde U') \circ \Tcw(\tilde C_U) \circ e(\tilde U) = \pi(\tilde U') \circ \zeta_{\tilde U',\tilde U} \circ e(\tilde U) =  \pi(\tilde U') \circ e(\tilde U') \circ \pi(\tilde U) \circ e(\tilde U) = \id$.
\\[.3em]
(iii) Let $U \xrightarrow{M} V \xrightarrow{N} W$ be two composable morphisms in $\bord$ and choose a preimage $\tilde U \xrightarrow{\tilde M} \tilde V \xrightarrow{\tilde N} \tilde W$ in $\bord^\mathrm{cw}$. To check compatibility with composition, we need to show $T(N \circ M) = T(N) \circ T(M)$. Inserting the definition, this amounts to
\be
  \pi(\tilde W) \circ \Tcw(\tilde N \circ \tilde M) \circ e(\tilde U)
  =
  \pi(\tilde W) \circ \Tcw(\tilde N) \circ e(\tilde V) \circ \pi(\tilde V) \circ \Tcw(\tilde M) \circ e(\tilde U)
\ee
That the two sides are indeed equal can be seen as follows.  By lemma \ref{lem:e-r-and-Tcw-relation}, $e(\tilde V) \circ \pi(\tilde V) = \zeta_{\tilde V,\tilde V}$, and, if $\tilde C$ is a preimage of $C_V$, by functoriality of $\Tcw$ the rhs is equal to $ \pi(\tilde W) \circ \Tcw(\tilde N \circ \tilde C \circ \tilde M) \circ e(\tilde U)$. But $F(\tilde N \circ \tilde C \circ \tilde M) = N \circ M$, so that by lemma \ref{lem:Tcw-cell-indep-fixed-bnd}, the rhs is indeed equal to the lhs. 

Monoidality and symmetry of $T$ are implied by that of $\Tcw$.
\end{proof}

This concludes our construction of an example of a two-dimensional topological field theory with defects. 

\subsection{Some examples of amplitudes}\label{sec:example-amplitude}

\begin{figure}[tb] 
\raisebox{60pt}{a)} \hspace{-2em}
\raisebox{-38pt}{\begin{picture}(80,80)
  \put(0,0){\scalebox{.45}{\includegraphics{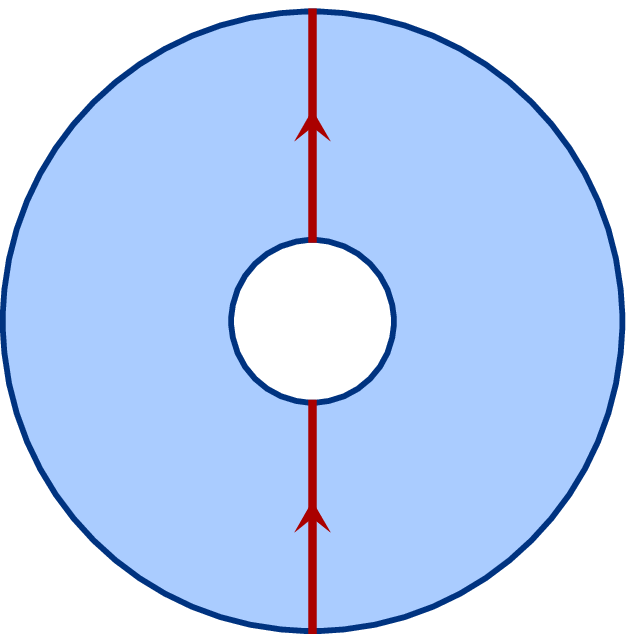}}}
  \put(0,0){
     \setlength{\unitlength}{.45pt}\put(-211,-323){
     \put(240,409)   {\scriptsize $b$ }
     \put(352,409)   {\scriptsize $a$ }
     \put(284,464)   {\scriptsize $y$ }
     \put(284,358)   {\scriptsize $x$ }
  }}
\end{picture}}
\hspace{1em}
\raisebox{-57pt}{\begin{picture}(122,122)
  \put(0,0){\scalebox{.7}{\includegraphics{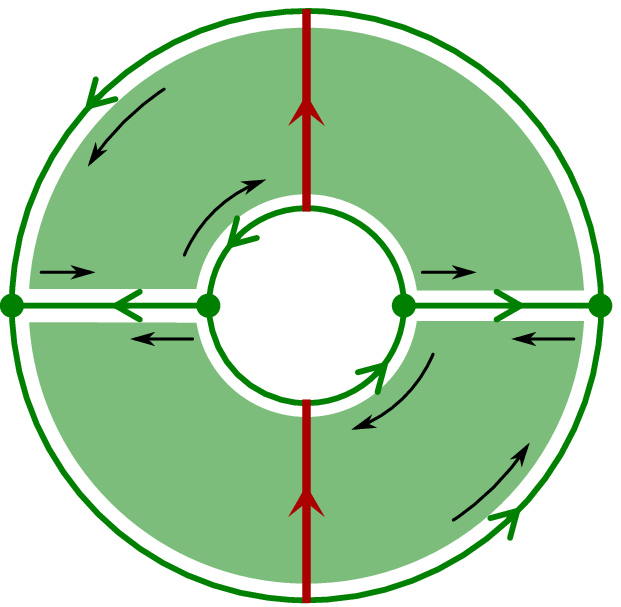}}}
  \put(0,0){
     \setlength{\unitlength}{.7pt}\put(-213,-327){
     \put(258,370)   {\scriptsize $p_1$ }
     \put(330,454)   {\scriptsize $p_2$ }
     \put(305,431)   {\scriptsize $e_1$ }
     \put(284,398)   {\scriptsize $e_2$ }
     \put(249,424)   {\scriptsize $e_3$ }
     \put(359,424)   {\scriptsize $e_4$ }
     \put(222,471)   {\scriptsize $e_5$ }
     \put(367,351)   {\scriptsize $e_6$ }
  }}
\end{picture}}
\hspace{2em}
\raisebox{60pt}{b)} \hspace{-2em}
\raisebox{-38pt}{\begin{picture}(80,80)
  \put(0,0){\scalebox{.45}{\includegraphics{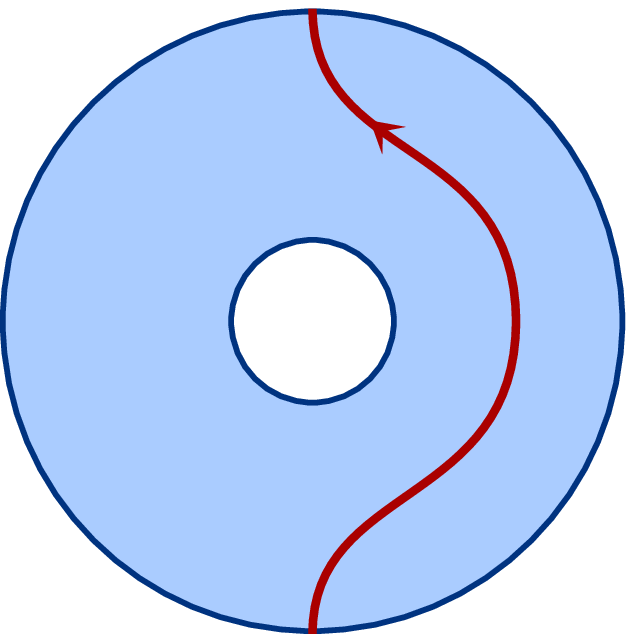}}}
  \put(0,0){
     \setlength{\unitlength}{.45pt}\put(-211,-323){
     \put(305,460)   {\scriptsize $x$ }
     \put(240,409)   {\scriptsize $b$ }
     \put(370,409)   {\scriptsize $a$ }
  }}
\end{picture}}
\hspace{1em}
\raisebox{-57pt}{\begin{picture}(122,122)
  \put(0,0){\scalebox{.7}{\includegraphics{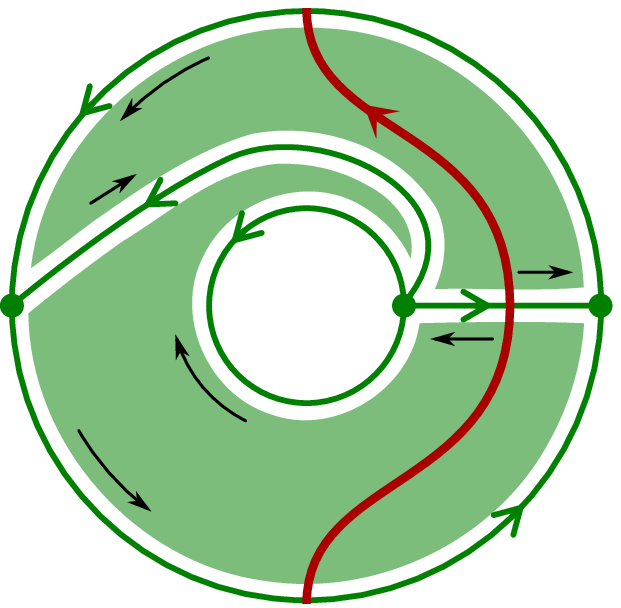}}}
  \put(0,0){
     \setlength{\unitlength}{.7pt}\put(-213,-327){
     \put(253,375)   {\scriptsize $p_1$ }
     \put(282,475)   {\scriptsize $p_2$ }
     \put(286,427)   {\scriptsize $e_1$ }
     \put(244,425)   {\scriptsize $e_2$ }
     \put(343,424)   {\scriptsize $e_3$ }
     \put(220,474)   {\scriptsize $e_4$ }
     \put(368,347)   {\scriptsize $e_5$ }
  }}
\end{picture}}
\caption{Two bordisms with defects together with a choice of cell-decomposition used in the sample computation. As in figure \ref{fig:annulus-cell-decomp}, the orientations of the edges are chosen for convenience and are not part of the data of the cell decomposition.}
\label{fig:amplitude-examples}
\end{figure}

Let us work through two more examples to see how the amplitude of a bordism
\be
  M : U \to V 
\ee
in $\bord \equiv \bord_{2,1}^\mathrm{def,top}$
is computed in lattice TFT. As in section \ref{sec:T^cw-cell-indep}, we denote by $\tilde M : \tilde U \to \tilde V$ a lift to 
$\bord^\mathrm{cw} \equiv \bord_{2,1}^\mathrm{def,top,cw}$.

\medskip

The first example is shown in figure \ref{fig:amplitude-examples}\,a). Using the notation of section \ref{sec:2-cat-from-defect-QFT}, let $U = V = O(y \circ x^*)$ be the object of $\bord$ consisting of a single $S^1$ with two marked points $(x,-)$ and $(y,+)$. For $\tilde U = \tilde V$ we choose a decomposition with two 1-cells. The spaces $R_e$ are:
\begin{quote}
\begin{tabular}{c|cccc}
$e$ & $e_1$  & $e_2$  & $e_5$  & $e_6$ \\
\hline
$R_e$ & $X_y$ & $X_x^*$ & $X_y$ & $X_x^*$ 
\end{tabular}
\end{quote}
The allowed triples are:
\begin{quote}
\begin{tabular}{c|cccc}
$(p,e,\mathrm{or})$ & $(p_1,e_2,-)$ &  $(p_1,e_3,+)$ &  $(p_1,e_6,+)$ &  $(p_1,e_4,-)$ 
\\
\hline
$Q_{p,e,\mathrm{or}}$ & $X_x^*$ & $A_b$ & $X_x$ & $A_a$ 
\end{tabular}
\\[.5em]
\begin{tabular}{c|cccc}
$(p,e,\mathrm{or})$ &  $(p_2,e_1,-)$ &  $(p_2,e_4,+)$ &  $(p_2,e_5,+)$ & $(p_2,e_3,-)$ 
\\
\hline
$Q_{p,e,\mathrm{or}}$ & $X_y$ & $A_a$ & $X_y^*$ & $A_b$ 
\end{tabular}
\end{quote}
We now evaluate $\Tcw(\tilde M)$ as given in \eqref{eq:Tcw-as-two-maps}, which is a linear map from $R_{e_1} \otimes R_{e_2}$ to $R_{e_5} \otimes R_{e_6}$, both of which spaces are equal to $X_y \otimes X_x^*$.
The map $\id \otimes P$ in \eqref{eq:Tcw-as-two-maps} maps the element $w \otimes \varphi \in R_{e_1} \otimes R_{e_2}$ to (not writing all `$\otimes$'-symbols)
$$ 
\begin{array}{l@{}c@{}c@{}c@{}c@{}c@{}c@{}c@{}c@{}c@{}c@{}c@{}c@{}c@{}c@{}c@{}c@{}c@{}c@{}c}
\longrightarrow &(R_{e_1} && R_{e_2}) &&
(Q_{p_1,e_3,+} && Q_{p_1,e_6,+} && Q_{p_1,e_4,-} && 
Q_{p_2,e_4,+} && Q_{p_2,e_5,+} &&  Q_{p_2,e_3,-}) && 
(R_{e_5} && R_{e_6})
\\
\displaystyle
\mapsto \sum_{i,j,k,l} & (w & \otimes& \varphi)&\otimes&( b_i' &\otimes& u_j &\otimes& a_k' &\otimes& a_k &\otimes& v_l^* &\otimes& b_i)&\otimes&(v_l &\otimes& u_j^*)
\end{array}
$$
which in turn gets mapped by $E \otimes \id$ to 
\be
\sum_{i,j,k,l} \varphi(b_i'.u_j.a_k') \, v_l^*(b_i.w.a_k) \, v_l \otimes u_j^*
\ee
in $R_{e_5} \otimes R_{e_6}$. This can be simplified by carrying out the sum over the bases $u_j$, $v_l$ and their duals, resulting in
\be
  \Tcw(\tilde M)(w \otimes \varphi) = \sum_{i,k}(b_i.w.a_k) \otimes (a_k'.\varphi.b_i') \ .
\ee
Comparing to the discussion in section \ref{sec:alg-prel}, we see that this is nothing but the projector $p_\otimes$ on $X_y \otimes X_x^*$ whose image is $\cytens{A_b} X_y \otimes_{A_a} X_x^*$. Combining this with the definition \eqref{eq:TM-def} of $T$, we see that $T(M) = \id$ on $\cytens{A_b} X_y \otimes_{A_a} X_x^*$. This illustrates point (ii) of theorem \ref{thm:lattice-defect-TFT}.

The following lemma provides a different point of view on this result which will be useful in understanding the 2-category of defect conditions. Denote by $\mathrm{ev}_V : V^* \otimes V \to k$ the evaluation of a vector space on its dual and  write $\Hom_{A|B}(X,Y)$ for the space of bimodule maps between two $A$-$B$-bimodules $X,Y$.

\begin{lem} \label{lem:YxX*-Hom(X,Y)}
Let $A,B$ be Frobenius algebras with trace pairing, and let $X,Y$ be finite dimensional $A$-$B$-bimodules. The map $\phi :\, \cytens A Y \otimes_B X^*  \to \Hom_{A|B}(X,Y)$, 
\be
  \phi(\gamma) ~:=~ \Big(
  X 
  \xrightarrow{\gamma \otimes \id_X} 
  (\cytens A Y \otimes_B X^*) \otimes X
  \xrightarrow{e_\otimes \otimes \id_X} 
  Y \otimes X^* \otimes X
  \xrightarrow{\id_X \otimes \mathrm{ev}_X} 
  Y
  \Big)
\ee
is an isomorphism.
\end{lem}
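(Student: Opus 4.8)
The plan is to produce an explicit two-sided inverse to $\phi$ and to reduce every verification to the completeness relations for Frobenius algebras with trace pairing collected in section \ref{sec:alg-prel}. Throughout I identify $\cytens A Y \otimes_B X^*$ with its image under $e_\otimes$ inside the plain tensor product $Y \otimes X^*$, so that a class $\gamma$ is represented by some $w = e_\otimes(\gamma) = \sum_k y_k \otimes \xi_k$ fixed by the idempotent $p_\otimes$ of \eqref{eq:er-def-multiple-tensor}; under the standard isomorphism $Y \otimes X^* \cong \Hom_k(X,Y)$ the linear map $\phi(\gamma)$ is then simply $x \mapsto \sum_k \xi_k(x)\,y_k$.

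First I would define the candidate inverse. Fix a basis $\{x_\mu\}$ of $X$ with dual basis $\{x_\mu^*\}$ of $X^*$ and set
\be
  \psi(f) := \pi_\otimes\Big(\sum_\mu f(x_\mu) \otimes x_\mu^*\Big)
  \quad\text{for } f \in \Hom_{A|B}(X,Y) \ .
\ee
The identity $\phi \circ \psi = \id$ is a direct computation: applying $e_\otimes$ and using $e_\otimes \circ \pi_\otimes = p_\otimes$ gives $\phi(\psi(f))(x) = \sum_{i,j} a_i\, f(a_i'.x.b_j)\, b_j'$, where $a_i,a_i'$ and $b_j,b_j'$ are dual bases of $A$ and $B$; pulling $f$ through by $A$-$B$-linearity and then collapsing the two sums with $\sum_i a_i a_i' = 1$ and $\sum_j b_j b_j' = 1$ (eq.\ \eqref{eq:beta-special-property}) yields $f(x)$. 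Conversely $\psi \circ \phi = \id$ is even shorter: with $w = \sum_k y_k \otimes \xi_k$ as above, the reconstruction $\sum_\mu \xi(x_\mu)\, x_\mu^* = \xi$ gives $\sum_\mu \phi(\gamma)(x_\mu) \otimes x_\mu^* = w$, whence $\psi(\phi(\gamma)) = \pi_\otimes(w) = \pi_\otimes(e_\otimes(\gamma)) = \gamma$.

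The step that genuinely uses the cyclic and balanced structure — and the one I expect to be the main obstacle — is checking that $\phi$ is well defined, i.e.\ that $\phi(\gamma)$ lands in $\Hom_{A|B}(X,Y)$ and not merely in $\Hom_k(X,Y)$. Since the combined projector factors as commuting idempotents $p_\otimes = p_A\, p_B = p_B\, p_A$ (the $A$- and $B$-actions commute on the bimodules), any representative with $p_\otimes(w)=w$ also satisfies $w = p_A(w)$ and $w = p_B(w)$. Substituting the $B$-balanced form $w = \sum_{k,j}(y_k.b_j') \otimes (b_j.\xi_k)$ into $\phi(\gamma)(x.b)$ and $\phi(\gamma)(x).b$, the desired equality reduces, after applying for each $k$ the linear map $B \otimes B \to Y,\ c \otimes c' \mapsto \xi_k(x.c)\,(y_k.c')$, to the single identity
\be
  \sum_j (b\,b_j) \otimes b_j' = \sum_j b_j \otimes (b_j'\,b) \quad\in B \otimes B \ ,
\ee
which is precisely the symmetric form of statement (i) of lemma \ref{lem:beta(1)-prop} read in $B^\mathrm{op} \otimes B$ (equivalently \eqref{eq:beta-move-factors}). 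Left $A$-linearity is the mirror computation, using $w = p_A(w)$ and the same identity for $A$.

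Once well-definedness is secured, the two composite computations above show that $\phi$ and $\psi$ are mutually inverse, so $\phi$ is an isomorphism. The only place where one must be careful is exactly the well-definedness argument: it is tempting to claim the $B$-balancing relation $\sum_k (y_k.b)\otimes\xi_k = \sum_k y_k\otimes(b.\xi_k)$ holds already in $Y\otimes X^*$, which is false — it holds only in $Y\otimes_B X^*$ — so the argument must route through $w = p_B(w)$ and the Frobenius move above rather than through the balancing relation directly.
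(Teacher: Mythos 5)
Your proof is correct and follows essentially the same route as the paper's (which is only sketched there): the inverse you construct via $\psi(f)=\pi_\otimes\big(\sum_\mu f(x_\mu)\otimes x_\mu^*\big)$ is exactly the ``coevaluation map'' the paper alludes to, and your well-definedness argument via $w=p_A(w)=p_B(w)$ together with $\sum_j (b\,b_j)\otimes b_j' = \sum_j b_j\otimes(b_j'\,b)$ is the explicit form of the paper's appeal to $(e_\otimes\circ\gamma).a = a.(e_\otimes\circ\gamma)$ and the duality of $\mathrm{ev}_X$. Your closing caveat about not invoking the balancing relation inside the plain tensor product $Y\otimes X^*$ is exactly the right point to be careful about.
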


\begin{proof} 
Let $\gamma \in \, \cytens A Y \otimes_B X^*$. That $\phi(\gamma)$ is a bimodule map is a straightforward calculation using $\mathrm{ev}_X(u \otimes (a.v.b)) = \mathrm{ev}_X((b.u.a) \otimes v)$ and $(e_\otimes \circ \gamma).a = a.(e_\otimes \circ \gamma)$. That $\phi$ is an isomorphism follows by the standard argument using the corresponding coevaluation map and the duality properties.
\end{proof}

Via this lemma we can also think of $T(O(y \circ x^*))$ as $\Hom_{A_b|A_a}(X_x,X_y)$. If we identify $X_y \otimes X_x^*$ with $\Hom(X_x,X_y)$ then $\Tcw(\tilde M)$ becomes the projection from general linear maps to bimodule intertwiners.

\medskip

The second example -- figure \ref{fig:amplitude-examples}\,b) -- is again an annulus with a domain wall, but this time the in-going boundary sits entirely in one domain. Here, the source-object is $U = O(b)$, an $S^1$ with no marked points and labelled $b \in D_2$, and the target object is $V = O(x \circ x^*)$. The lift $\tilde U$ we chose contains a single edge, while $\tilde V$ contains two edges. The spaces $R_e$ and $Q_{p,e,\mathrm{or}}$ in this case are:
\begin{quote}
\begin{tabular}{c|ccc}
$e$ & $e_1$  & $e_4$  & $e_5$ \\
\hline
$R_e$ & $A_b$ & $X_x$ & $X_x^*$ 
\end{tabular}
\\[.5em]
\begin{tabular}{c|c@{\,}c@{\,}c@{\,}c@{\,}c@{\,}c@{\,}c}
$(p,e,\mathrm{or})$ & $(p_1,e_1,-)$ &  $(p_1,e_2,+)$ &  $(p_1,e_5,+)$ &  $(p_1,e_3,-)$
 &  $(p_2,e_2,-)$ &  $(p_2,e_3,+)$ &  $(p_2,e_4,+)$  
\\
\hline
$Q_{p,e,\mathrm{or}}$ & $A_b$ & $A_b$ & $X_x$ & $X_x^*$ & $A_b$ & $X_x$ & $X_x^*$ 
\end{tabular}
\end{quote}
The map $\id \otimes P$ takes $w \in A_b$ to
$$ 
\begin{array}{l@{}c@{}c@{}c@{}c@{}c@{}c@{}c@{}c@{}c@{}c@{}c@{}c@{}c@{}c@{}c@{}c@{}c@{}c@{}c}
\longrightarrow &R_{e_1} && 
(Q_{p_1,e_3,-} && Q_{p_1,e_2,+} && Q_{p_1,e_5,+} &&  
Q_{p_2,e_4,+} && Q_{p_2,e_2,-} &&  Q_{p_2,e_3,+}) && 
(R_{e_4} && R_{e_5}) 
\\
\displaystyle
\mapsto \sum_{i,j,k,l} & w &\otimes&( u_i^* &\otimes& b_j'  &\otimes& u_k &\otimes& u_l^* &\otimes& b_j &\otimes& u_i)&\otimes&(u_l &\otimes& u_k^*) ~~.
\end{array}
$$
This in turn is mapped to $\sum_{i,j,k,l} u_i^*((w \cdot b_j').u_k)\,u_l^*(b_j.u_i)\cdot u_l \otimes u_k^*$ by $E \otimes \id$, which simplifies to
\be
  \Tcw(\tilde M)(w) = 
  \sum_{j,k} \big((b_j w b_j').u_k \big) \otimes u_k^* 
  =
  \sum_{k} (p_\otimes(w).u_k) \otimes u_k^*  \ ,
\ee
where $p_\otimes : A_b \to A_b$ is the projection to the centre of $A_b$, see lemma \ref{lem:ho-hom=ho-cohom}.
Accordingly, the resulting map for the bordism $M$ is
\be
\begin{array}{rccc}
  T(M) : &Z(A_b)  &\longrightarrow& \Hom_{A_b|A_a}(X_x,X_x)
\\
& z  &\longmapsto& (q \mapsto z.q) \ ,
\end{array}
\ee
where we have identified $\cytens{A_b} X_x \otimes_{A_a} X_x^* \cong \Hom_{A_b|A_a}(X_x,X_x)$ via lemma \ref{lem:YxX*-Hom(X,Y)}.

\subsection{Bicategory of algebras and 2-category of defect conditions} \label{sec:bicat-alg-2cat-def}

In the construction in section \ref{sec:lattice-TFT-data}--\ref{sec:T^cw-cell-indep}, the domain wall conditions were given by bimodules. Bimodules naturally form a bicategory (see \cite[Sec.\,2.5]{Benabou:1967}, \cite[Sec.\,I.3]{Gray:1974} or \cite[Ch.\,XII.7]{MacLane-book}), and in this subsection we want to compare this bicategory to the 2-category of defect conditions described in section \ref{sec:2-cat-from-defect-QFT}. Our conventions for bicategories can be found in appendix \ref{app:bicategories}.

\begin{defn}
(i) The category $\alg(k)$ has associative unital algebras over $k$ as objects and (unital) algebra homomorphisms as morphisms.
\\[.3em]
(ii) The bicategory $\Alg(k)$ has associative unital algebras over $k$ as objects. The morphism category $\Alg(k)(A,B)$ is given by the category of $B$-$A$-bimodules and bimodule intertwiners. The composition functor $\Alg(k)(B,C) \times \Alg(k)(A,B) \to \Alg(k)(A,C)$ is $(-)\otimes_B(-)$.
\end{defn}

We will start with a small digression which is not restricted to Frobenius algebras with trace pairing. Namely, we will look at some properties of $\Alg(k)$.

\medskip

Given a 1-category $\mathcal{C}$, we denote the bicategory obtained from $\mathcal{C}$ by adding only identity 2-morphisms again by $\mathcal{C}$. When comparing $\alg(k)$ and $\Alg(k)$, we understand $\alg(k)$ as a bicategory in this sense. For an algebra map $f : A \to B$ and a right $B$-module $M$, we denote by $M_f$ the right $A$-module with action $(m,a) \mapsto m.f(a)$. In particular, $B_f$ is a $B$-$A$-bimodule. The next lemma (following \cite[Sec.\,5.7]{Benabou:1967}) makes precise the idea that $\Alg(k)$ contains more 1- and 2-morphisms than $\alg(k)$.\footnote{
Note that, while $\alg(k)(A,B)$ is not additive (since $f+g$ is never an algebra homomorphism if $f$ and $g$ are), the category $\Alg(k)(A,B)$ has direct sums of 1-morphisms, so that we have added enough morphisms to `linearise' $\alg(k)$.}

\begin{lem} \label{lem:alg->Alg}
(i) Let $A \xrightarrow{f} B \xrightarrow{g} C$ be algebra maps. The following map is well-defined and an isomorphism of $C$-$A$-bimodules:
\be \label{eq:m_gf-def-Alg-Alg}
  m_{g,f} : C_g \otimes_B B_f \longrightarrow C_{g \circ f}
  \quad , \quad
  c \otimes_B b \longmapsto c \cdot g(b) \ .
\ee
(ii) The assignment
\be
  i : \alg(k) \longrightarrow \Alg(k) \ ,
\ee   
which is the identity on objects and which maps $A\xrightarrow{f}B$ to $B_f$, is a (non-lax) functor. The unit transformations are identities and the multiplication transformations are given by $m_{g,f}$.
\\[.3em]
(iii) Let $f,g : A \to B$ be algebra maps. Then $i(f)$ and $i(g)$ are 2-isomorphic in $\Alg(k)$ if and only if $f(-) = u \cdot g(-) \cdot u^{-1}$ for some $u \in B^\times$.
\end{lem}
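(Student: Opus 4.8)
Parts (i) and (ii) are routine direct verifications, which I would dispatch before concentrating on (iii). For (i) I would first check that $m_{g,f}$ is balanced over $\otimes_B$: both $(c.\beta)\otimes_B b$ and $c\otimes_B(\beta.b)$ map to $c\,g(\beta b)$, using that the right $B$-action on $C_g$ is $c.\beta=c\,g(\beta)$ and the left $B$-action on $B_f$ is ordinary multiplication. Left $C$-linearity is immediate, and right $A$-linearity follows from $g(b\,f(a))=g(b)\,(g\circ f)(a)$, which matches the right $A$-action on $C_{g\circ f}$; the inverse is $c\mapsto c\otimes_B 1$, since $m_{g,f}(c\otimes_B 1)=c$ and $c\otimes_B b=(c\,g(b))\otimes_B 1$ after moving $b$ across the balanced tensor product. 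Part (ii) then follows: $i$ is the identity on objects and preserves units strictly because $i(\id_A)=A_{\id_A}$ is the regular bimodule, the $m_{g,f}$ supply the composition isomorphisms, and their coherence axioms reduce to the evident associativity and unit compatibilities of the formula $c\otimes_B b\mapsto c\,g(b)$.

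The substance is in (iii), and the plan is to compute $\Hom_{B|A}(B_f,B_g)$ explicitly. The key observation is that $i(f)=B_f$ and $i(g)=B_g$ share the same underlying left $B$-module, namely $B$ regarded as the free left $B$-module of rank one on the generator $1\in B$; they differ only in the right $A$-action, which is through $f$ in the first case and through $g$ in the second. Hence any left $B$-linear map $\phi:B_f\to B_g$ is determined by $c:=\phi(1)$ via $\phi(b)=b\,c$. Imposing right $A$-linearity, the condition $\phi(b.a)=\phi(b).a$ reads $b\,f(a)\,c=b\,c\,g(a)$ for all $b\in B$ and $a\in A$; evaluating at $b=1$ gives $f(a)\,c=c\,g(a)$ for all $a\in A$, and conversely this single identity implies right $A$-linearity for all $b$. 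Thus $\phi\mapsto\phi(1)$ identifies $\Hom_{B|A}(B_f,B_g)$ with the set $\{\,c\in B\mid f(a)\,c=c\,g(a)\text{ for all }a\in A\,\}$.

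It then remains to characterise when $\phi(b)=b\,c$ is a bimodule isomorphism. If $c\in B^\times$ and $f(a)\,c=c\,g(a)$, then $g(a)\,c^{-1}=c^{-1}\,f(a)$, so right multiplication by $c^{-1}$ is a bimodule map $B_g\to B_f$ inverse to $\phi$. Conversely, if $\phi$ is a bimodule isomorphism, its inverse is in particular left $B$-linear, hence of the form $b\mapsto b\,d$ with $d=\phi^{-1}(1)$; evaluating $\phi\circ\phi^{-1}=\id$ and $\phi^{-1}\circ\phi=\id$ at $1$ yields $c\,d=d\,c=1$, so $c\in B^\times$. Putting this together, $i(f)\cong i(g)$ precisely when there exists $u:=c\in B^\times$ with $f(a)=u\,g(a)\,u^{-1}$ for all $a\in A$, which is the assertion. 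The computation is elementary; the only step needing care is this converse invertibility argument, where one must use the rank-one freeness of $B$ as a left module to force the inverse of $\phi$ to be right multiplication again, and hence $c$ to be a genuine two-sided unit rather than merely a one-sided regular element. I would also keep a close eye on the action conventions (the right $A$-action on $B_f$ being $b.a=b\,f(a)$), since reversing them would swap the roles of $u$ and $u^{-1}$.
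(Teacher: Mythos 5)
Your proposal is correct and follows essentially the same route as the paper: the same balancing check and inverse $c\mapsto c\otimes_B 1$ for (i), the same coherence verification for (ii), and the same evaluation-at-$1$ argument yielding $f(a)\,\psi(1)=\psi(1)\,g(a)$ for (iii). Your only addition is to spell out why $\psi(1)\in B^\times$ (via the inverse also being right multiplication), a point the paper simply asserts.
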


\begin{proof} Abbreviate $m \equiv m_{g,f}$. 
\\[.3em]
(i) To see that $m$ is well-defined, consider the map $\bar m : C_g \otimes B_f \to C_{g \circ f}$ given by $u \otimes v \mapsto ug(v)$. We verify the cokernel condition: for $b \in B$ we have
$\bar m( (u.b) \otimes v) 
= \bar m( (ug(b)) \otimes v) 
= ug(b)g(v) = ug(bv) = \bar m(u \otimes (bv))$. 
Therefore, $\bar m$ induces a map $C_g \otimes_B B_f \to C_{g \circ f}$, which is precisely $m$. Since $c \mapsto c \otimes_B 1_B$ is an isomorphism from $C_{g \circ f}$ to $C_g \otimes_B B_f $, and since by composing with $m$ one obtains the identity on $C_g$, it follows that $m$ is an isomorphism. It is straightforward to check that $m$ intertwines the $C$-$A$-bimodule structures.
\\[.3em]
(ii) We have to verify associativity and unit properties of the functor. We start with associativity. Given algebra maps $A \xrightarrow{f} B \xrightarrow{g} C \xrightarrow{h} D$, we must show commutativity of the diagram
\be
\raisebox{4em}{\xymatrix{
( D_h \otimes_C C_g ) \otimes_B B_f
\ar[r]^\sim \ar[d]_{m_{h,g} \otimes_{B} \id} 
& 
D_h \otimes_C ( C_g \otimes_B B_f )
\ar[d]^{\id \otimes_{C} m_{g,f}} 
\\
D_{h\circ g} \otimes_{B} B_f 
\ar[d]_{m_{h\circ g,f}}
& D_h \otimes_{C} C_{g\circ f} 
\ar[d]^{m_{h ,  g \circ f}}
\\
D_{h\circ g \circ f} \ar[r]^= 
& D_{h\circ g \circ f} 
}}
\ee
Acting on an element $d \otimes_{C} c \otimes_{B} b$, the left brach gives
$d \cdot h(c) \cdot h(g(b))$ and the right branch gives $d \cdot h(c\cdot g(b))$. These are equal as $h$ is an algebra map. The unit properties in turn amount to commutativity of the following two diagrams:
\be
\raisebox{2em}{\xymatrix{
B \otimes_{B} B_f \ar[r]^(.6)\sim \ar[d]^{\id}
&
B_f
\\
B_\id \otimes_{B} B_f \ar[r]^(.6){m_{\id,f}}
&
B_{\id \circ f} \ar[u]_\id
}}
\quad,\quad
\raisebox{2em}{\xymatrix{
B_f \otimes_{A} A \ar[r]^(.6)\sim \ar[d]^{\id}
&
B_f
\\
B_f \otimes_{A} A_\id \ar[r]^(.6){m_{f,\id}}
&
B_{\id \circ f} \ar[u]_\id
}}
\ee
In the left diagram, both branches give the map $b \otimes_{B} b' \mapsto b \cdot b'$, and in the right diagram, both branches give $b \otimes_{B} a \mapsto b \cdot f(a)$.

Since by part (i) the $m$'s are isomorphisms, we do indeed obtain a functor, not just a lax functor. 
\\[.3em]
(iii) `$\Rightarrow$': Suppose that $\psi : B_f \to B_g$ is an isomorphism of $B$-$A$-bimodules. Then for all $x,b \in B$ and $a \in A$ we have $\psi(b\cdot x\cdot f(a)) = b \cdot \psi(x) \cdot g(a)$. From this we conclude that $f(a) \cdot \psi(1) = \psi(f(a)) =\psi(1) \cdot g(a)$. Since $\psi$ is invertible, $\psi(1) \in B^\times$, and so $f(a) = \psi(1) \cdot g(a) \cdot \psi(1)^{-1}$.

`$\Leftarrow$': The isomorphism is given by $b \mapsto b \cdot u$.
\end{proof}

Recall the construction of the 2-category $\bfD[D_2,D_1;T]$ in \eqref{eq:def-defect-2cat}, the assignment of algebras and bimodules to elements of $D_2$ and $D_1$ in the beginning of section \ref{sec:lattice-TFT-data}, and the definition of the defect TFT $T$ in theorem \ref{thm:lattice-defect-TFT}. We want to define a functor
\be \label{eq:Delta-B-Alg-def}
\Delta : \bfD[D_2,D_1;T] \longrightarrow \Alg(k) \ ,
\ee
which on objects $a \in D_2$ acts $\Delta(a) = A_a$ and on 1-morphisms $\dwt x \in \bfD(a,b)$ as $\Delta(\dwt x) = X_{\dwt x}$, using the notation \eqref{eq:X-dwtx-def}. The action on 2-morphisms will be described after the following remark.

\begin{rem} \label{rem:scale-trans-inv-in-TFT}
(i) By \eqref{eq:defect-2-cat-2morphs}, the 2-morphism spaces of $\bfD[D_2,D_1;T]$ are given by $\bfD_2(\dwt x,\dwt y) := H^\mathrm{inv}(\dwt y \circ \dwt x^*)$. One may think that in a TFT all states are scale and translation invariant, and this is true but for one detail. Let $\dwt x : a \to a$ and let $C_{O(\dwt x)}$ be the  cylinder over $O(\dwt x)$. The defining property \eqref{eq:scale-trans-inv-def} of a scale and translation invariant family implies that all vectors $\psi_{\dwt x;r}$ lie in the image of the idempotent $T(C_{O(\dwt x)}) : T(O(\dwt x)) \to T(O(\dwt x))$. Conversely, each vector in the image of $T(C_{O(\dwt x)})$ gives rise to a scale and translation invariant family. Indeed, for TFTs, $T(O(\dwt x)) \equiv T(O(\dwt x;r))$ is independent of $r$, and so is the family $\psi_{\dwt x;r}$. We can therefore identify $H^\mathrm{inv}(\dwt x)$ with the image of the idempotent $T(C_{O(\dwt x)})$. For our lattice TFT construction, by theorem \ref{thm:lattice-defect-TFT}\,(iii) this does not make a difference, but for a general TFT, $T(C_{O(\dwt x)})$ may be different from the identity map on $T(O(\dwt x))$.
\\[.3em]
(ii) Given a TFT for which the idempotents $T(C_U)$ for objects $U \in \bord_{2,1}^\mathrm{def,top}$ are not always identity maps, one can define a new TFT $T'$ in which one replaces all state spaces $T(U)$ by the image of the corresponding idempotent $T(C_U)$. The embedding of the image of $T(C_U)$ into $T(U)$ provides a monoidal natural transformation from $T'$ to $T$. One can think of $T'$ as the `non-degenerate subtheory' of $T$, because an amplitude $T(U \xrightarrow{M} V)$ vanishes if its argument comes from the kernel of $T(C_U)$. In principle, one can always work with non-degenerate TFTs, but in some situations degenerate TFTs are useful as an intermediate step (such as in the orbifold construction of \cite{Frohlich:2009gb}, or in a sense also the construction in section \ref{sec:T^cw-cell-indep}, where $T$ was defined precisely as the restriction of $\Tcw$ to images of idempotents).
\end{rem}

According to part (i) of the above remark, in our lattice TFT example we have $H^\mathrm{inv}(\dwt x) = T(O(\dwt x))$. Substituting the definition of $T$ on objects in \eqref{eq:def-TFT-T-on-S1}, we see that for $\dwt x, \dwt y : a \to b$,
\be
  \bfD_2(\dwt x,\dwt y) = \, \cytens{A_b} X_{\dwt y} \otimes_{A_a} X_{\dwt x^*} \ .
\ee
Using this and lemma \ref{lem:YxX*-Hom(X,Y)}, we can finally state the action of the functor $\Delta$ on morphisms. Namely for $u \in \bfD_2(\dwt x,\dwt y)$ we set $\Delta(u) = \phi(u) : X_{\dwt x} \to Y_{\dwt y}$. 

\medskip

We should now proceed to show that $\Delta$ thus defined is indeed a functor between bicategories, which in addition is locally fully faithful (since $\phi$ is an isomorphism). However, we will not go through these details and instead turn to the next topic, the relation between lattice TFT with defects and the centre of an algebra.

\section{The centre of an algebra} \label{sec:centre}

The map which assigns to an algebra $A$ its centre $Z(A)$ is not functorial, at least not in the obvious sense. Namely, given $A \in \alg(k)$, then also $Z(A) \in \alg(k)$, but for an algebra homomorphism $f : A \to B$ it is in general not true that $f|_{Z(A)}$ lands in $Z(B)$. For example, if $A$ is the algebra of diagonal $2{\times}2$ matrices, if $B$ is all $2{\times}2$-matrices and if $f$ is the embedding map, then $Z(A)=A$, but $Z(B) = k \, \id$ which does not contain $f(Z(A))$.

For Frobenius algebras with trace pairing one could use the maps $e_\otimes$ and $\pi_\otimes$ between $A$ and $Z(A) = \, \cytens{A}A$ (cf.\ lemma \ref{lem:ho-hom=ho-cohom} -- not true for general algebras) to map $f$ to $\pi_\otimes \circ f \circ e_\otimes$, but this would in general not be compatible with composition and multiplication. 

\medskip

The main point of this section is to define a functorial version of the centre. This is done by first constructing a bicategory -- or rather two versions thereof -- whose objects are commutative algebras. The centre is then a lax functor into this bicategory; this functor will also be given in two versions (theorem \ref{thm:alg->CAlg-laxfun} and remark \ref{rem:centre-2nd-version}). These constructions are motivated by 2-dimensional TFT with defects, so we begin the discussion by highlighting the relevant algebras and maps in the defect TFT.

\subsection{Spaces and maps associated to defect TFTs} \label{sec:spaces+maps}

Let $T : \bord_{2,1}^\mathrm{def,top}(D_2,D_1,D_0) \to \vect_f(k)$ be a defect TFT (not necessarily obtained via lattice TFT). The functor $T$ encodes an infinite number of state spaces and linear maps between them. In this subsection we will pick out some of the more fundamental ones and investigate their properties. 

By remark \ref{rem:scale-trans-inv-in-TFT}\,(ii) we are entitled to assume that all idempotents $T(C_U)$ are in fact identity maps, and we will make this assumption for the rest of this subsection.
Recall from section \ref{sec:2-cat-from-defect-QFT} the 2-category $\bfD[D_2,D_1;T]$ associated to a field theory with defects. By remark \ref{rem:scale-trans-inv-in-TFT}\,(i) and because of our assumption that $T(C_U) = \id_{T(U)}$, definition \eqref{eq:defect-2-cat-2morphs} of the 2-morphism spaces becomes
\be
  \bfD_2(\dwt x,\dwt y) = T(O(\dwt y \circ \dwt x^*)) \ .
\ee

Recall that the identity 1-morphism $\one_a : a \to a$, for $a \in D_2$, is the empty tuple $\one_a = ()$. Consider the space of 2-endomorphisms of $\one_a$,  $\bfD_2(\one_a,\one_a) = T(O(a))$. This is an associative, commutative, unital algebra; the bordisms which give the multiplication and unit morphisms are those in figure \ref{fig:id-hor-vert-comp}\,a,b), but without domain walls. Commutativity follows since precomposing the multiplication bordism with a transposition $\sigma : O(a) \sqcup O(a) \to O(a) \sqcup O(a)$ gives a diffeomorphic bordism. In fact, by the usual arguments, it is even a Frobenius algebra, and this Frobenius algebra defines the defect-free TFT given by $D_2 = \{a\}$ and $D_1 = D_0 = \emptyset$.

For an arbitrary 1-morphism $\dwt x : a \to b$, the 2-endomorphisms $\bfD_2(\dwt x,\dwt x)$ do still form an associative, unital algebra (even a Frobenius algebra), but this algebra need not be commutative. The horizontal composition functors for 
$(a \xrightarrow{\one_a} a \xrightarrow{\dwt x} b) =  a \xrightarrow{\dwt x} b$ 
and
$(a \xrightarrow{\dwt x} b \xrightarrow{\one_b} b) =  a \xrightarrow{\dwt x} b$ 
give linear maps
\be
  \hat R : \bfD_2(\dwt x,\dwt x) \otimes \bfD_2(\one_a,\one_a) \longrightarrow \bfD_2(\dwt x,\dwt x) ~~,~~
  \hat L :  \bfD_2(\one_b,\one_b) \otimes \bfD_2(\dwt x,\dwt x) \longrightarrow \bfD_2(\dwt x,\dwt x) \ .
\ee
The bordisms for the maps $\hat L$ and $\hat R$ are as in figure \ref{fig:id-hor-vert-comp}\,c), provided we specialise the latter to the case where only one of the two in-going boundary circles has domain walls attached to it. If we insert the identity 2-morphism $\id_{\dwt x}$, we obtain maps $R := \hat R(\id_{\dwt x} \otimes -) :  \bfD_2(\one_a,\one_a) \to \bfD_2(\dwt x,\dwt x)$ and $L := \hat L(- \otimes \id_{\dwt x}) : \bfD_2(\one_b,\one_b) \to \bfD_2(\dwt x,\dwt x)$. The corresponding bordisms are obtained by gluing a disc as in figure \ref{fig:id-hor-vert-comp}\,a) into the hole which has the domain walls attached. Figure \ref{fig:amplitude-examples}\,b) shows a bordism obtained in this way.

\begin{figure}[tb] 
$$
\begin{array}{l}
T\Bigg(
\raisebox{-26pt}{\begin{picture}(58,60)
  \put(0,0){\scalebox{.5}{\includegraphics{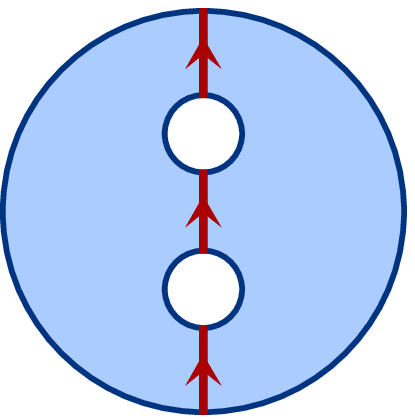}}}
  \put(0,0){
     \setlength{\unitlength}{.5pt}\put(-241,-356){
     \put(253,410)   {\scriptsize $b$ }
     \put(336,410)   {\scriptsize $a$ }
     \put(306,453)   {\scriptsize $x$ }
     \put(306,410)   {\scriptsize $x$ }
     \put(306,368)   {\scriptsize $x$ }
     \put(294,433)   {\scriptsize $1$ }
     \put(294,388)   {\scriptsize $2$ }
  }}
\end{picture}}
\Bigg)\!\Big(L(q),w\Big)
~~ = ~~
T\Bigg(
\raisebox{-26pt}{\begin{picture}(58,60)
  \put(0,0){\scalebox{.5}{\includegraphics{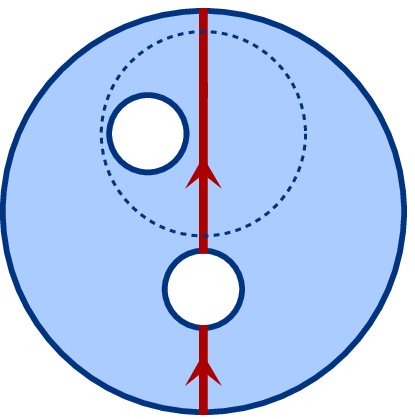}}}
  \put(0,0){
     \setlength{\unitlength}{.5pt}\put(-241,-356){
     \put(253,410)   {\scriptsize $b$ }
     \put(336,410)   {\scriptsize $a$ }
     \put(306,443)   {\scriptsize $x$ }
     \put(306,368)   {\scriptsize $x$ }
     \put(277,433)   {\scriptsize $1$ }
     \put(294,388)   {\scriptsize $2$ }
  }}
\end{picture}}
\Bigg)\!\Big(q,w\Big)
\\[2em]
\hspace{5em}
= ~~
T\Bigg(
\raisebox{-26pt}{\begin{picture}(58,60)
  \put(0,0){\scalebox{.5}{\includegraphics{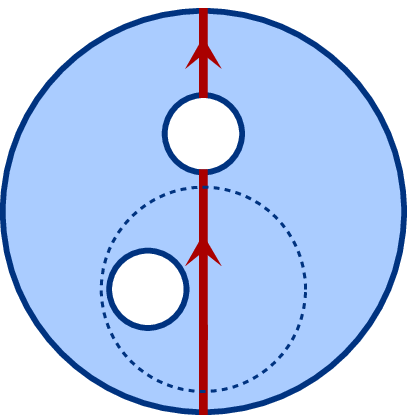}}}
  \put(0,0){
     \setlength{\unitlength}{.5pt}\put(-241,-356){
     \put(306,453)   {\scriptsize $x$ }
     \put(306,378)   {\scriptsize $x$ }
     \put(253,410)   {\scriptsize $b$ }
     \put(336,410)   {\scriptsize $a$ }
     \put(294,433)   {\scriptsize $2$ }
     \put(277,388)   {\scriptsize $1$ }
  }}
\end{picture}}
\Bigg)\!\Big(q,w\Big)
~~ = ~~
T\Bigg(
\raisebox{-26pt}{\begin{picture}(58,60)
  \put(0,0){\scalebox{.5}{\includegraphics{pic16a.eps}}}
  \put(0,0){
     \setlength{\unitlength}{.5pt}\put(-241,-356){
     \put(253,410)   {\scriptsize $b$ }
     \put(336,410)   {\scriptsize $a$ }
     \put(306,453)   {\scriptsize $x$ }
     \put(306,410)   {\scriptsize $x$ }
     \put(306,368)   {\scriptsize $x$ }
     \put(294,433)   {\scriptsize $2$ }
     \put(294,388)   {\scriptsize $1$ }
  }}
\end{picture}}
\Bigg)\!\Big(L(q),w\Big)
\end{array}
$$
\vspace*{-1em}
\caption{Manipulation of bordisms showing that $L \equiv \hat L(- \otimes \id_{\dwt x}): \bfD_2(\one_b,\one_b) \to\bfD_2(\dwt x,\dwt x)$ maps to the centre of $\bfD_2(\dwt x,\dwt x)$. Here $q \in \bfD_2(\one_b,\one_b)$ and $w \in \bfD_2(\dwt x,\dwt x)$.}
\label{fig:L-maps-to-centre}
\end{figure}

With the help of bordisms, it is easy to see that $R$ and $L$ are algebra homomorphisms whose image lies in the centre of $\bfD_2(\dwt x,\dwt x)$. The bordism manipulations showing that the image of $L$ lies in the centre are given in figure \ref{fig:L-maps-to-centre}.\footnote{Manipulating such disk-shaped bordisms reminds one of the string-diagram notation for 2-categories \cite{Street:1996}. Indeed a string-diagram identity implies an identity for defect correlators on disks, but the converse is not true -- the 2-category $\bfD[D_2,D_1;T]$ satisfies more conditions than a generic 2-category.}

\begin{figure}[tb] 
$$
\xymatrix@R=4em@C=8em{& 
T\Big(
\raisebox{-17pt}{\begin{picture}(25,42)
  \put(-1,6){\scalebox{.70}{\includegraphics{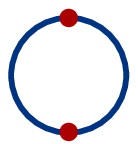}}}
  \put(-1,6){
     \setlength{\unitlength}{.70pt}\put(-280,-396){
     \put(297,422)   {\scriptsize $x$ }
     \put(297,405)   {\scriptsize $x$ }
     \put(300,437)   {\scriptsize $+$ }
     \put(300,390)   {\scriptsize $-$ }
     \put(307,412)   {\scriptsize $a$ }
     \put(287,412)   {\scriptsize $b$ }
  }}
\end{picture}}
\Big)
\ar[d]_(.55){
T\Bigg(
\raisebox{-26pt}{\begin{picture}(58,60)
  \put(0,0){\scalebox{.5}{\includegraphics{pic16a.eps}}}
  \put(0,0){
     \setlength{\unitlength}{.5pt}\put(-241,-356){
     \put(253,410)   {\scriptsize $b$ }
     \put(336,410)   {\scriptsize $a$ }
     \put(306,453)   {\scriptsize $y$ }
     \put(306,410)   {\scriptsize $x$ }
     \put(306,368)   {\scriptsize $x$ }
     \put(294,433)   {\scriptsize $f$ }
  }}
\end{picture}}
\Bigg)
} \\ 
T\Big(
\raisebox{-10pt}{\begin{picture}(26,27)
  \put(0,0){\scalebox{.70}{\includegraphics{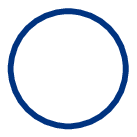}}}
  \put(0,0){
     \setlength{\unitlength}{.70pt}\put(-280,-396){
     \put(288,411)   {\scriptsize $b$ }
  }}
\end{picture}}
\Big)
\ar@/^2em/[ur]^(.4){
T\Bigg(
\raisebox{-17pt}{\begin{picture}(40,42)
  \put(0,0){\scalebox{.35}{\includegraphics{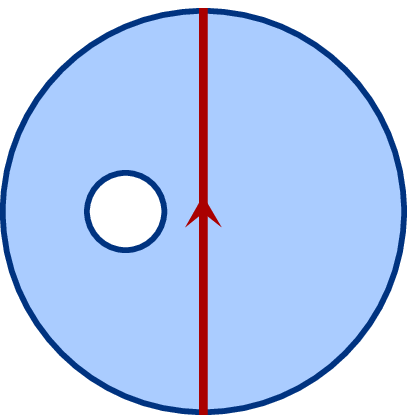}}}
  \put(0,0){
     \setlength{\unitlength}{.35pt}\put(-241,-356){
     \put(248,410)   {\scriptsize $b$ }
     \put(341,410)   {\scriptsize $a$ }
     \put(305,377)   {\scriptsize $x$ }
  }}
\end{picture}}
\Bigg)
} \ar@/_2em/[dr]_(.4){
T\Bigg(
\raisebox{-17pt}{\begin{picture}(40,42)
  \put(0,0){\scalebox{.35}{\includegraphics{pic17a.eps}}}
  \put(0,0){
     \setlength{\unitlength}{.35pt}\put(-241,-356){
     \put(248,410)   {\scriptsize $b$ }
     \put(341,410)   {\scriptsize $a$ }
     \put(305,377)   {\scriptsize $y$ }
  }}
\end{picture}}
\Bigg)
} & 
T\Big(
\raisebox{-17pt}{\begin{picture}(25,42)
  \put(-1,6){\scalebox{.70}{\includegraphics{pic17d.eps}}}
  \put(-1,6){
     \setlength{\unitlength}{.70pt}\put(-280,-396){
     \put(297,422)   {\scriptsize $y$ }
     \put(297,405)   {\scriptsize $x$ }
     \put(300,437)   {\scriptsize $+$ }
     \put(300,390)   {\scriptsize $-$ }
     \put(307,412)   {\scriptsize $a$ }
     \put(287,412)   {\scriptsize $b$ }
  }}
\end{picture}}
\Big)
& 
T\Big(
\raisebox{-10pt}{\begin{picture}(26,27)
  \put(0,0){\scalebox{.70}{\includegraphics{pic17c.eps}}}
  \put(0,0){
     \setlength{\unitlength}{.70pt}\put(-280,-396){
     \put(288,411)   {\scriptsize $a$ }
  }}
\end{picture}}
\Big)
\ar@/_2em/[ul]_(.4){
T\Bigg(
\raisebox{-17pt}{\begin{picture}(40,42)
  \put(0,0){\scalebox{.35}{\includegraphics{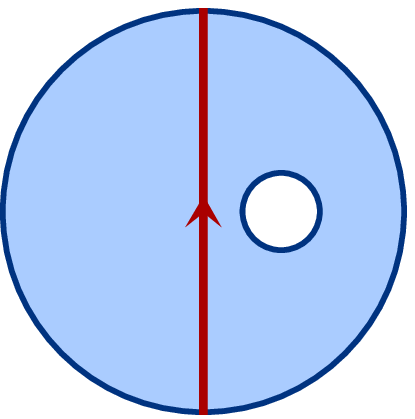}}}
  \put(0,0){
     \setlength{\unitlength}{.35pt}\put(-241,-356){
     \put(248,410)   {\scriptsize $b$ }
     \put(341,410)   {\scriptsize $a$ }
     \put(305,377)   {\scriptsize $x$ }
  }}
\end{picture}}
\Bigg)
} \ar@/^2em/[dl]^(.4){
T\Bigg(
\raisebox{-17pt}{\begin{picture}(40,42)
  \put(0,0){\scalebox{.35}{\includegraphics{pic17b.eps}}}
  \put(0,0){
     \setlength{\unitlength}{.35pt}\put(-241,-356){
     \put(248,410)   {\scriptsize $b$ }
     \put(341,410)   {\scriptsize $a$ }
     \put(305,377)   {\scriptsize $y$ }
  }}
\end{picture}}
\Bigg)
} \\ & 
T\Big(
\raisebox{-17pt}{\begin{picture}(25,42)
  \put(-1,6){\scalebox{.70}{\includegraphics{pic17d.eps}}}
  \put(-1,6){
     \setlength{\unitlength}{.70pt}\put(-280,-396){
     \put(297,422)   {\scriptsize $y$ }
     \put(297,405)   {\scriptsize $y$ }
     \put(300,437)   {\scriptsize $+$ }
     \put(300,390)   {\scriptsize $-$ }
     \put(307,412)   {\scriptsize $a$ }
     \put(287,412)   {\scriptsize $b$ }
  }}
\end{picture}}
\Big)
 \ar[u]_(.55){
T\Bigg(
\raisebox{-26pt}{\begin{picture}(58,60)
  \put(0,0){\scalebox{.5}{\includegraphics{pic16a.eps}}}
  \put(0,0){
     \setlength{\unitlength}{.5pt}\put(-241,-356){
     \put(253,410)   {\scriptsize $b$ }
     \put(336,410)   {\scriptsize $a$ }
     \put(306,453)   {\scriptsize $y$ }
     \put(306,410)   {\scriptsize $y$ }
     \put(306,368)   {\scriptsize $x$ }
     \put(294,388)   {\scriptsize $f$ }
  }}
\end{picture}}
\Bigg)
 } }
$$
\vspace*{-1.5em}
\caption{Summary of the state spaces and maps between them as described in section \ref{sec:spaces+maps}. Here only the case that $\dwt x = ((x,+))$ and $\dwt y = ((y,+))$ is shown. For tuples with more elements, the bordisms involve the corresponding sequences of parallel lines.}
\label{fig:2-digram-from-TFT}
\end{figure}

Next, consider the space  of 2-morphisms $\bfD_2(\dwt x,\dwt y)$ between two 1-morphisms $\dwt x, \dwt y : a \to b$. This is the TFT state space for a circle with sequence of marked points $\dwt y \circ \dwt x^*$. By vertical composition, $\bfD_2(\dwt x,\dwt y)$ is a right $\bfD_2(\dwt x,\dwt x)$-module and a left $\bfD_2(\dwt y,\dwt y)$-module. Using $R$ to map $\bfD_2(\one_a,\one_a)$ into $\bfD_2(\dwt x,\dwt x)$ and $\bfD_2(\dwt y,\dwt y)$, we see that $\bfD_2(\dwt x,\dwt y)$ is also a bimodule for $\bfD_2(\one_a,\one_a)$. However, by an argument analogous to that in figure \ref{fig:L-maps-to-centre} it is easy to check that the left and right action agree. Equally, $L$ turns it into an $\bfD_2(\one_b,\one_b)$ bimodule with identical left and right action.

Let $f \in \bfD_2(\dwt x,\dwt y)$ be a 2-morphism. Pre- and post-composing with $f$ defines maps 
\be
  f \circ (-) : \bfD_2(\dwt x,\dwt x) \to \bfD_2(\dwt x,\dwt y)
  \quad , \quad
  (-) \circ f : \bfD_2(\dwt y,\dwt y) \to \bfD_2(\dwt x,\dwt y)
\ee
Again by manipulating bordisms, one checks that $f \circ (-)$ intertwines the right $\bfD_2(\dwt x,\dwt x)$ action and $(-) \circ f$ intertwines the left $\bfD_2(\dwt y,\dwt y)$-action. All these maps are collected in figure \ref{fig:2-digram-from-TFT}.

\begin{rem}
In conformal (and thus in particular in topological) field theory, one has the state-field correspondence, which says that the space of fields associated to a point on the world sheet is the same as the space of states on a small circle obtained by cutting out a small disc around this point; in fact, one can take this as a definition of what one means by a field. Then $\bfD_2(\one_a,\one_a)$ is the space of `bulk fields' and $\bfD_2(\dwt x,\dwt x)$ is the space of `defect fields' supported on the defect $x$. An important notion in quantum field theory is the short distance expansion or operator product expansion (OPE). In TFT, of course, the distance between insertion points is immaterial. The above considerations isolate the three most important OPEs: the OPE of two bulk fields; the OPE of two defect fields; the expansion of a bulk field close to a defect line in terms of defect fields.
\end{rem}

\subsection{The bicategory of commutative algebras, version 1} \label{sec:comm-alg-v1}

In this subsection we will use some of the structure seen in 2d TFT with defects in the previous subsection to define a bicategory of commutative algebras in terms of cospans. All algebras will be unital, associative algebras over a field $k$.

\begin{defn} \label{def:cospan}
A {\em cospan between commutative algebras}, or {\em cospan} for short, is a tuple $(A,\alpha,T,\beta,B)$, where $A,B$ are commutative algebras, $T$ is an algebra, and $\alpha,\beta$ are algebra homomorphisms
\be\label{eq:cospan-def}
\cospd{A}{\alpha}{T}{\beta}{B}
\ee
such that the images of $\alpha$ and $\beta$ lie in the centre $Z(T)$ of $T$.
\end{defn}

The definition has no preferred `direction', but we will pick one anyway: we will think of \eqref{eq:cospan-def} as going from $B$ to $A$. The reason for this choice is that we will use the maps $\alpha,\beta$ to turn $T$ into an $A$-$B$-bimodule and the composition of cospans (to be defined in more detail below) will be the tensor product of bimodules, just as in the bicategory $\Alg(k)$. In the latter, $A$-$B$-bimodules serve as 1-morphisms $B \to A$. We will write $T : B \to A$, or just $T$, to abbreviate the data in \eqref{eq:cospan-def}. Two different cospans $T,T' : B \to A$ can be compared via algebra homomorphisms $T\to T'$. This leads first to a category of cospans from $B$ to $A$, and then to a bicategory $\CAlg(k)$ of cospans between commutative algebras.
The construction is almost identical to the standard construction of the bicategory of spans for a given category with pullbacks (see \cite[Sec.\,2.6]{Benabou:1967} or \cite{Gray:1974,MacLane-book}), with the exception that not all three objects in \eqref{eq:cospan-def} are taken from the same category (we use commutative algebras for the starting points of the cospan and not necessarily commutative algebras for the middle term).

\begin{defn} \label{def:cosp-1cat}
The category $\cosp(A,B)$ of cospans between commutative algebras from $A$ to $B$ is defined as follows.
\begin{itemize}
\item objects $T \in \cosp(A,B)$ are cospans $T : A \to B$.
\item morphisms $f \in \cosp(A,B)(T,T')$ from $T : A \to B$ to $T' : A \to B$ are algebra maps $f : T \to T'$ such that the following diagram commutes:
\be \label{eq:cospan-morphism-condition}
\cospdm{B}{\beta}{T}{\alpha}{A}{\beta'}{T'}{\alpha'}{f}
\ee
\item the unit morphism in $\cosp(A,B)(T,T)$ is the identity map $\id_T$, and composition of morphisms is composition of algebra maps.
\end{itemize}
\end{defn}

The composition of two cospans $A \xrightarrow{T} B \xrightarrow{S} C$ is defined by the usual pushout square,
\be
\raisebox{3.6em}{\small \xymatrix@C=1.5em@R=1.5em{
&& S \otimes_B T
\\
& S \ar[ur]^{\id \otimes_B 1_T} && T \ar[ul]_{1_S \otimes_B \id} 
\\
C \ar[ur]^{\gamma} && B \ar[ul]_\beta \ar[ur]^{\beta'} && A \ar[ul]_\alpha
}}
\ee
The algebra homomorphism $\beta$ turns $S$ into a right $B$-module, and $\beta'$ turns $T$ into a left $B$-module; these module structures are implied when writing $S \otimes_B T$. 
We still need to turn $S \otimes_B T$ into an algebra and show that $A$ and $C$ get mapped to the centre of this algebra.
The multiplication on $S \otimes_B T$ is given by 
\be
  (s\otimes_B t) \cdot (s' \otimes_B t') := (ss') \otimes_B (tt') \ .
\ee  
To check that this is well-defined, we start with the map $\bar m : (S \otimes T) \otimes (S \otimes T) \to S \otimes_B T$, which takes $(s\otimes t) \otimes (s' \otimes t')$ to $(ss') \otimes_B (tt')$ and verify the cokernel condition. We present the calculation for the first factor, the one for the second factor is similar. With $b \in B$,
\be\begin{array}{l}
  \bar m(s.b \otimes t \otimes s' \otimes t') 
  = (s \beta(b) s') \otimes_B (tt')
  \overset{(*)}= (s s' \beta(b)) \otimes_B (tt')
\\[.5em]
\quad  = (s s').b \otimes_B (tt')
  = (s s') \otimes_B b.(tt')
 = (s s') \otimes_B (\beta'(b)tt')
  = \bar m(s \otimes b.t \otimes s' \otimes t') 
\end{array}
\ee
The step marked `$(*)$' uses that the image of the algebra homomorphism $\beta$ is in the centre of $S$. This, by the way, is the reason not to allow general algebra homomorphisms in definition \ref{def:cospan}: we want the tensor product over $B$ to carry an induced algebra structure.
Finally, it is straightforward to check that for all $a \in A$, $c \in C$, the elements $1_S \otimes_B \alpha(a)$ and $\gamma(c) \otimes_B 1_T$ are in the centre of $S \otimes_B T$. 

The composition of cospans defined above forms part of a functor:

\begin{lem} \label{lem:cosp-composition}
The assignment
\be \label{eq:cosp-compos-functor}
\begin{array}{rccccc}
\circledcirc_{C,B,A} :& \cosp(B,C) &\hspace{-.5em}\times\hspace{-.5em}& \cosp(A,B) &\longrightarrow& \cosp(A,C)
\\
&\Bigg(
\cospdm{C}{\gamma}{T}{\beta_2}{B}{\gamma'}{T'}{\beta_2'}{g}
&\hspace{-.5em},\hspace{-.5em}&
\cospdm{B}{\beta_1}{S}{\alpha}{A}{\beta_1'}{S'}{\alpha'}{f}
\Bigg)
&\longmapsto&
\cospdm{C}{\gamma \otimes_B 1}{T \otimes_B S}{1 \otimes_B \alpha}{A}{\gamma' \otimes_B 1}{T' \otimes_B S'}{1 \otimes_B \alpha'}{g \otimes_B f}
\end{array}
\ee
defines a functor.
\end{lem}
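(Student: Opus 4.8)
The plan is to treat this as the standard verification that a span/cospan-type construction is functorial, using that everything reduces to a computation on elementary tensors. The assignment on objects is already settled by the discussion preceding the lemma: for cospans $S \in \cosp(A,B)$ and $T \in \cosp(B,C)$ the space $T \otimes_B S$ was equipped with an algebra structure (via $(t \otimes_B s)(t' \otimes_B s') = (tt') \otimes_B (ss')$), and the maps $\gamma \otimes_B 1$ and $1 \otimes_B \alpha$ were checked to take values in its centre, so that the output is a genuine cospan $A \to C$. It therefore remains to (a) construct $g \otimes_B f$ on morphisms and verify it is a morphism of cospans, and (b) check the two functor axioms.

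For the morphism assignment I would first define $g \otimes_B f$ on the balanced tensor product. Given cospan morphisms $f : S \to S'$ and $g : T \to T'$, consider the map $T \otimes S \to T' \otimes_B S'$, $t \otimes s \mapsto g(t) \otimes_B f(s)$, and verify --- by the same cokernel argument used above for the well-definedness of the multiplication $\bar m$ --- that it descends to $T \otimes_B S$. This is the step that requires the most care: using that $g$ is multiplicative together with the cospan conditions $g \circ \beta_2 = \beta_2'$ and $f \circ \beta_1 = \beta_1'$ coming from \eqref{eq:cospan-morphism-condition}, one rewrites $g(t \cdot \beta_2(b)) \otimes_B f(s) = (g(t) \cdot \beta_2'(b)) \otimes_B f(s)$, moves $\beta_2'(b)$ across the balancing relation of $T' \otimes_B S'$, and identifies the result with $g(t) \otimes_B (\beta_1'(b) \cdot f(s)) = g(t) \otimes_B f(\beta_1(b) \cdot s)$. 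Hence the defining relation of $\otimes_B$ is respected and $g \otimes_B f : T \otimes_B S \to T' \otimes_B S'$ is well defined.

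Next I would confirm that $g \otimes_B f$ really is a morphism in $\cosp(A,C)$. That it is an algebra homomorphism is immediate from the product rule on $T \otimes_B S$ together with the multiplicativity of $f$ and $g$, and unitality follows since $f(1_S) = 1_{S'}$ and $g(1_T) = 1_{T'}$. The defining triangle \eqref{eq:cospan-morphism-condition} for the composite cospan reduces to the two identities $(g \otimes_B f)(\gamma(c) \otimes_B 1_S) = \gamma'(c) \otimes_B 1_{S'}$ and $(g \otimes_B f)(1_T \otimes_B \alpha(a)) = 1_{T'} \otimes_B \alpha'(a)$, both of which follow at once from $g \circ \gamma = \gamma'$, $f \circ \alpha = \alpha'$ and unitality.

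Finally, the functor axioms are formal. Preservation of identities holds because $\id_T \otimes_B \id_S$ acts as $t \otimes_B s \mapsto t \otimes_B s$, i.e.\ as $\id_{T \otimes_B S}$. Preservation of composition follows because $(g' \otimes_B f') \circ (g \otimes_B f)$ sends $t \otimes_B s$ to $g'(g(t)) \otimes_B f'(f(s)) = (g' \circ g)(t) \otimes_B (f' \circ f)(s)$, which is precisely $\big((g' \circ g) \otimes_B (f' \circ f)\big)(t \otimes_B s)$. The only genuine subtlety in the whole argument is the well-definedness of $g \otimes_B f$ over the balanced tensor product discussed above; once that is in place every remaining verification is a direct computation on elementary tensors.
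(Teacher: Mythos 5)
Your proof is correct and follows essentially the same route as the paper's: the key step, well-definedness of $g \otimes_B f$ over the balanced tensor product, is exactly the paper's observation that the cospan-morphism triangles make $g$ and $f$ into $C$-$B$- and $B$-$A$-bimodule maps, which you verify by the explicit cokernel computation the paper leaves implicit. The remaining checks (that $g\otimes_B f$ is an algebra map, the two triangles, and the functor axioms via elementary tensors) coincide with what the paper treats as immediate properties of $\otimes_B$.
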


\begin{proof}
Note that the algebra map $g$ is automatically a $C$-$B$-bimodule map. For example, $g(t.b) = g(t \cdot \beta_2(b)) = g(t) \cdot g(\beta_2(b)) = g(t) \cdot \beta_2'(b) = g(t).b$. Similarly, $f$ is a $B$-$A$-bimodule map. Thus $g \otimes_B f$ is well-defined. That the two triangles on the rhs of \eqref{eq:cosp-compos-functor} commute is immediate. Finally, functoriality of $\circledcirc_{C,B,A}$ amounts to the statement that
\be
  (g_2 \circ g_1) \otimes_B (f_2 \circ f_1)
  = (g_2 \otimes_B f_2) \circ (g_1 \otimes_B f_1) \ ,
\ee
which is a property of the tensor product over $B$.
\end{proof}

We have now gathered the ingredients to define the first version of the bicategory of commutative algebras, which we denote by 
\be \label{eq:CAlg-def}
  \CAlg(k) \ .
\ee
Its objects are commutative algebras over $k$. Given two such algebras $A,B$, the category of morphisms from $A$ to $B$ is $\cosp(A,B)$. The identity in $\cosp(A,A)$ is
\be\label{eq:identity-cospan}
\cospd A \id A \id A
 \qquad .
 \ee
 The composition functor is $\circledcirc_{C,B,A}$ from lemma \ref{lem:cosp-composition}. The associativity and unit isomorphisms are just the natural isomorphisms $T \otimes_C (S \otimes_B R) \cong (T \otimes_C S) \otimes_B R$ and $R \otimes_A A \cong R \cong B \otimes_B R$, which we will not write out in the following. It is then clear that the coherence conditions of a bicategory -- as listed in appendix \ref{app:bicategories} -- are satisfied.

\begin{rem}
For a category $\mathcal{C}$, we denote by $\mathcal{C}^{1,0}$ the subcategory containing only invertible morphisms. Similarly,
given a bicategory $\bfB$, denote by $\bfB^{2,1}$ the bicategory obtained from $\bfB$ by restricting to invertible 2-morphisms, and by $\bfB^{2,0}$ the bicategory consisting only of invertible 1- and 2-morphisms (and $\bfB \equiv \bfB^{2,2}$; see \cite{Lurie:2009aa} for more on $(m,n)$-categories). If $\bfB$ is a $k$-linear bicategory, we obtain a lax functor
\be \label{eq:E-B-CAlg-def}
  \bfE \,:\, \bfB^{2,1} \longrightarrow \CAlg(k) \ ,
\ee
where `$\bfE$' stands for endomorphism. We will illustrate this functor in the case of the 2-category $\bfB \equiv \bfD[D_2,D_1;T]$ for a fixed 2d TFT $T$ (which need not come from the lattice construction). On objects and 1-morphisms we set
\be
  \bfE(a) = \bfB(\one_a,\one_a) \quad , \qquad
  \bfE(b \xleftarrow{\dwt x} a) =  \Bigg(
  \cospd{\bfB(\one_b,\one_b)}{L}{\bfB(\dwt x,\dwt x)}{R}{\bfB(\one_a,\one_a)}  \Bigg)
  \quad ;
\ee
the maps $L$ and $R$ have been given in section \ref{sec:spaces+maps}. To an invertible 2-morphism $u : \dwt x \to \dwt y$ we assign the algebra map $\bfE(u) : \bfE(\dwt x) \to \bfE(\dwt y)$ given by conjugation with $u$. That is, $f : \dwt x \to \dwt x$ gets mapped to
\be \label{eq:E-functor-on-2morph}
  \bfE(u)(f) = 
  \Big( \dwt y \xrightarrow{u^{-1}} \dwt x \xrightarrow{~f~} \dwt x \xrightarrow{~u~} \dwt y \Big) \ .
\ee
We will omit the details of the proof that $\bfE$ is a lax functor. Note that, because \eqref{eq:E-functor-on-2morph} involves an inverse, $\bfE$ is only defined on $\bfB^{2,1}$. Nonetheless, $\bfB^{2,1}$ is not enough to define $\bfE$, instead one requires all of $\bfB$ so that $\bfB(\one_a,\one_a)$, etc., are indeed $k$-algebras. Also, even though the image $\bfE(u)$ of a 2-morphism is always invertible, it is not true that $\bfE$ is a functor to $\CAlg(k)^{2,1}$, because the associativity 2-morphism $\bfE(\dwt y) \circledcirc \bfE(\dwt x) \to \bfE(\dwt y \circ \dwt x)$ is not necessarily invertible.
\end{rem} 

Denote by $\alg(k)_\mathrm{com}$ the full subcategory of commutative algebras in $\alg(k)$. In the remainder of this subsection we illustrate that $\CAlg(k)$ enlarges the morphism spaces of $\alg(k)_\mathrm{com}$, but that it does not add new invertible morphisms.

\begin{lem} \label{lem:commutative-algebra-embedding}
The assignment
\be
\begin{array}{rccc}
I :& \alg(k)_\mathrm{com} &\longrightarrow&\CAlg(k)^{2,1}
\\[.5em]
&
B \xleftarrow{f} A &\longmapsto& \cospd B \id B f A
\end{array}
\ee
defines a (non-lax) functor.
\end{lem}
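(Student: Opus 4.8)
The plan is to recognise this lemma as the commutative, cospan-decorated analogue of Lemma~\ref{lem:alg->Alg}: the underlying $B$-$A$-bimodule of the cospan $I(f)=(B,\id,B,f,A)$ is precisely the regular-left, $f$-twisted-right bimodule $B_f=i(f)$, so the structure data of $I$ can be inherited from that of $i$, with the additional bookkeeping that the middle terms are now \emph{algebras} equipped with maps from two commutative algebras into their centre, and that every comparison morphism must respect the cospan legs. Concretely, I would first record that $I$ is well defined: on objects it is the identity, and for an algebra map $f:A\to B$ between commutative algebras the tuple $(B,\id_B,B,f,A)$ is a genuine cospan in the sense of Definition~\ref{def:cospan}, because $B$ commutative gives $Z(B)=B$, so the images of $\id_B$ and of $f$ lie in $Z(B)$ automatically. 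Since $\alg(k)_\mathrm{com}$ carries only identity $2$-morphisms, $I$ sends $2$-morphisms to identities, which are invertible; hence the image lies in $\CAlg(k)^{2,1}$, and the only substantive content is the compatibility of $I$ with composition and units.

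Next I would produce the compositor. Given $A\xrightarrow{f}B\xrightarrow{g}C$, the composite $I(g)\circledcirc I(f)$ (Lemma~\ref{lem:cosp-composition}) has middle algebra $C\otimes_B B$, where $C$ is a right $B$-module via $g$ and $B$ is the left regular module. I claim the map $\mu_{g,f}:C\otimes_B B\to C$, $c\otimes_B b\mapsto c\,g(b)$, is the required isomorphism of cospans. By Lemma~\ref{lem:alg->Alg}(i), specialised, $\mu_{g,f}$ is already an isomorphism of the underlying bimodules; it remains to observe that it is in addition an algebra map, which is immediate from the product $(c\otimes_B b)(c'\otimes_B b')=cc'\otimes_B bb'$ together with $g$ being a homomorphism, and that it intertwines the two cospan legs: the $C$-leg $c\mapsto c\otimes_B 1_B$ is carried to $\id_C$ and the $A$-leg $a\mapsto 1_C\otimes_B f(a)$ to $g\circ f$, which are exactly the legs of $I(g\circ f)=(C,\id_C,C,g\circ f,A)$. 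Hence $\mu_{g,f}$ is an invertible $2$-morphism $I(g)\circledcirc I(f)\xrightarrow{\sim} I(g\circ f)$.

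For units, $I(\id_A)=(A,\id_A,A,\id_A,A)$ is literally the identity cospan~\eqref{eq:identity-cospan}, so the unitors may be taken to be identities; naturality of $\mu_{g,f}$ in $f$ and $g$ is automatic since the source bicategory has only identity $2$-cells. To conclude that $I$ is a (non-lax) functor I would then verify the two coherence axioms of a pseudofunctor -- the associativity constraint relating $\mu_{h,g}$, $\mu_{h\circ g,f}$, $\mu_{g,f}$, $\mu_{h,g\circ f}$ and the associator of $\CAlg(k)$, and the two unit triangles. After unwinding the canonical tensor-product isomorphisms these reduce to the identity $h(c\cdot g(b))=h(c)\cdot h(g(b))$ and the evident unit identifications, exactly as in the proof of Lemma~\ref{lem:alg->Alg}(ii); the only extra points to record are that every isomorphism in sight is simultaneously an algebra map and compatible with the cospan legs.

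I expect the main obstacle to be purely organisational rather than mathematical: the essential computation is identical to that of Lemma~\ref{lem:alg->Alg}, and the hard part is keeping the direction conventions of the cospans and the left/right module actions consistent throughout the coherence diagrams so that each comparison morphism is checked against the correct legs. Once the identification of $I(f)$ with the decorated bimodule $B_f$ is fixed, no genuinely new estimate or construction is needed.
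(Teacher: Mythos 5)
Your proposal is correct and follows essentially the same route as the paper: the paper's proof also observes that identities go to identities and that the compositor is precisely the map $m_{g,f}$ of lemma \ref{lem:alg->Alg}, now checked to be an isomorphism of cospans, with the coherence verification deferred to the computation already done in lemma \ref{lem:alg->Alg}\,(ii). Your write-up merely makes explicit the small additional checks (that $m_{g,f}$ is an algebra map and intertwines the cospan legs) which the paper leaves to the reader.
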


\begin{proof}
Clearly, the identity gets mapped to the identity. Given two algebra homomorphisms $A \xrightarrow{f} B \xrightarrow{g} C$, one verifies that the map $m_{g,f}$ from lemma \ref{lem:alg->Alg} defines an isomorphism of cospans
\be
  \cospdm C{\id \otimes_B 1}{C \otimes_B B}{1 \otimes_B f}A \id C {g \circ f}{m_{g,f}}
  \quad .
\ee
The verification of the associativity condition works along the same lines as the proof of lemma \ref{lem:alg->Alg}\,(ii).
\end{proof}

A 2-morphism between the cospans $I(f)$ and $I(g)$ would necessarily have to be the identity map in oder to make the left triangle in the condition \eqref{eq:cospan-morphism-condition} commute. This then implies $f=g$. In particular, different algebra maps get mapped to non-2-isomorphic cospans. In this sense, $I$ is faithful.

\begin{lem} \label{lem:invertible-cospan-1}
A cospan 
\be \label{eq:invertible-cospan-1-T}
\cospd B \beta T \alpha A
\ee
is invertible in $\CAlg(k)$ if and only if $\alpha$ and $\beta$ are isomorphisms.
\end{lem}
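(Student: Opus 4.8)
The statement is an ``if and only if''; the plan is to treat the two directions separately, with the forward implication (invertibility $\Rightarrow$ $\alpha,\beta$ isomorphisms) being the substantial one.

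For the direction ``$\Leftarrow$'', suppose $\alpha$ and $\beta$ are isomorphisms. Since $A$ is commutative and $\alpha$ is an algebra isomorphism, $T$ is commutative, and $g:=\beta^{-1}\circ\alpha\colon A\to B$ is an isomorphism in $\alg(k)_{\mathrm{com}}$. I would then observe that $\beta\colon B\to T$ is an isomorphism of cospans from $I(g)$ (the image of $g$ under the functor of lemma \ref{lem:commutative-algebra-embedding}) to the cospan \eqref{eq:invertible-cospan-1-T}: indeed $\beta\circ\id_B=\beta$ and $\beta\circ g=\alpha$, so both triangles of \eqref{eq:cospan-morphism-condition} commute, and $\beta$ is invertible. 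Hence \eqref{eq:invertible-cospan-1-T} is $2$-isomorphic to $I(g)$. As $I$ is a functor (lemma \ref{lem:commutative-algebra-embedding}) and $g$ is invertible, $I(g)$ is invertible in $\CAlg(k)$, its inverse being $I(g^{-1})$ with coherence isomorphisms provided by the maps $m_{g,f}$ of lemma \ref{lem:alg->Alg}. Since invertibility of a $1$-morphism is preserved under $2$-isomorphism, \eqref{eq:invertible-cospan-1-T} is invertible.

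For ``$\Rightarrow$'', let $S\colon B\to A$, with structure maps $\alpha'\colon A\to S$ and $\beta'\colon B\to S$, be an inverse cospan, so that there are $2$-isomorphisms $S\circledcirc T\cong\id_A$ and $T\circledcirc S\cong\id_B$. Reading off the structure maps of the composites from lemma \ref{lem:cosp-composition} and comparing with the identity cospans through \eqref{eq:cospan-morphism-condition}, I obtain that $\theta:=(1_S\otimes_B\alpha)=(\alpha'\otimes_B 1_T)\colon A\to S\otimes_B T$ and $\eta:=(\beta\otimes_A 1_S)=(1_T\otimes_A\beta')\colon B\to T\otimes_A S$ are algebra isomorphisms. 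The maps $j\colon T\to S\otimes_B T$, $t\mapsto 1_S\otimes_B t$, and $j'\colon T\to T\otimes_A S$, $t\mapsto t\otimes_A 1_S$, are algebra homomorphisms satisfying $j\circ\alpha=\theta$ and $j'\circ\beta=\eta$. Setting $\rho:=\theta^{-1}\circ j\colon T\to A$ and $\sigma:=\eta^{-1}\circ j'\colon T\to B$ thus yields algebra maps with $\rho\circ\alpha=\id_A$ and $\sigma\circ\beta=\id_B$; in particular $\rho,\sigma$ are surjective and $\alpha,\beta$ are split injective.

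It remains to prove $\alpha\circ\rho=\id_T$ (and, symmetrically, $\beta\circ\sigma=\id_T$); this is the heart of the matter and the step I expect to be the main obstacle, because a single one of the two cospan isomorphisms only produces a retraction, and one must feed in the algebra structure (the centrality of $\alpha,\beta$, used already to make $j,j'$ algebra maps) via the \emph{other} tensor factor. The trick: fix $t\in T$ and set $t':=t-\alpha(\rho(t))$, so $\rho(t')=0$ and hence $j(t')=\theta(\rho(t'))=0$, i.e.\ $1_S\otimes_B t'=0$ in $S\otimes_B T$. Tensoring on the left by $T$ over $A$ gives $\tau\otimes_A 1_S\otimes_B t'=0$ in $T\otimes_A S\otimes_B T$ for every $\tau\in T$. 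Applying the bimodule isomorphism $\eta^{-1}\colon T\otimes_A S\to B$ to the first two factors followed by the canonical identification $B\otimes_B T\cong T$, $b\otimes_B t\mapsto\beta(b)\,t$, sends this element to $\beta(\sigma(\tau))\,t'$ (using $\eta^{-1}\circ j'=\sigma$). Thus $\beta(\sigma(\tau))\,t'=0$ for all $\tau$, and choosing $\tau$ with $\sigma(\tau)=1_B$ (possible as $\sigma$ is surjective) forces $t'=0$. Hence $\alpha\circ\rho=\id_T$ and $\alpha$ is an isomorphism. Interchanging the roles of the two factors---starting instead from $t''\otimes_A 1_S=0$ and contracting the last two factors of $T\otimes_A S\otimes_B T$ with $\theta^{-1}$, using that $\rho$ is surjective---shows $\beta\circ\sigma=\id_T$, so $\beta$ is an isomorphism as well, completing the proof.
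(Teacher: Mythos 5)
Your proof is correct and follows essentially the same route as the paper's: the ``$\Leftarrow$'' direction identifies the cospan with $I(\beta^{-1}\circ\alpha)$ and invokes the functor $I$, while the ``$\Rightarrow$'' direction reads off from the cospan 2-isomorphisms that $1_S\otimes_B\alpha=\alpha'\otimes_B 1_T$ is invertible, extracts the retraction $\rho=\theta^{-1}\circ j$ (the paper's $\hat f$), and then uses the other composite to upgrade it to a two-sided inverse. The only difference is the bookkeeping in that last step --- you argue element-wise through the triple tensor product $T\otimes_A S\otimes_B T$, contracting with $\eta^{-1}$ and splitting $\beta$ via $\sigma$, whereas the paper stays in $S\otimes_B T$ and splits $\beta'$ via $\hat g$ --- but the content is the same and your version is equally valid.
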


\begin{proof}
`$\Leftarrow$': Suppose $\alpha$ and $\beta$ are isomorphisms. Then 
\be\label{eq:invertible-cospan-1-aux2}
\cospdm B \beta T \alpha A \id B {\beta^{-1} \circ \alpha} {\beta^{-1}}
\ee
is an isomorphism of cospans, i.e.\ $T \cong I(\beta^{-1} \circ \alpha)$. The latter cospan has inverse $I(\alpha^{-1} \circ \beta)$ by lemma \ref{lem:commutative-algebra-embedding}. Thus also $T$ is invertible.
\\[.3em]
`$\Rightarrow$': Suppose $(A,\alpha',S,\beta',B)$ is a two-sided inverse of $T$. This means that there are isomorphisms $f,g$ of cospans
\be
\cospdm A{\alpha' \otimes_B 1}{S \otimes_B T}{1 \otimes_B \alpha}A\id A \id f
\qquad \text{and} \qquad
\cospdm B{\beta \otimes_A 1}{T \otimes_A S}{1 \otimes_A \beta'}B \id B \id g
\qquad .
\ee
Consider the left diagram. Since $f$ is an isomorphism, it implies that also $f^{-1} = 1_S \otimes_B \alpha$ is an isomorphism. Thus we have the identities
\be\label{eq:invertible-cospan-1-aux1}
  f \circ (1_S \otimes_B \alpha) = \id_A 
  \qquad , \qquad (1_S \otimes_B \alpha) \circ f = \id_{S \otimes_B T} \ .
\ee  
The first of these can be rewritten as $f \circ (1_S \otimes_B \id_T) \circ \alpha = \id_A$, showing that $\alpha$ 
has left-inverse $\hat f := f \circ (1_S \otimes_B \id_T) : T \to A$. An analogous argument gives the left inverse $\hat g := g \circ (1_T \otimes_A \id_S) : S \to B$ of $\beta'$.

Since $\beta' : B \to S$ is an algebra map and an intertwiner of right $B$-modules (by definition of the right $B$-action on $S$), we can write $1_S \otimes_B \alpha : A \to S \otimes_B T$ as $(\beta' \otimes_B \id_T) \circ (1_B \otimes_B \alpha)$. Inserting this into the second identity in \eqref{eq:invertible-cospan-1-aux1} gives
\be
  \id_{S \otimes_B T}
  = \Big(
  S \otimes_B T \xrightarrow{f} A \xrightarrow{~\alpha~} T \xrightarrow{1_B \otimes_B \id_T} B \otimes_B T
  \xrightarrow{\beta' \otimes_B \id_T} S \otimes_B T \Big)
\ee
We compose both sides with $\hat g \otimes_B \id_T$ and use that $\hat g$ is left-inverse to $\beta'$. This results in $\hat g \otimes_B \id_T = (1_B \otimes_B \id_T) \circ \alpha \circ f$. Finally, composing with $1_S \otimes_B \id_T$ from the left and using that $\hat g(1_S) = g(1_T \otimes_A 1_S) = 1_B$, shows that $1_B \otimes_B \id_T = (1_B \otimes_B \id_T) \circ \alpha \circ \hat f$. Since $1_B \otimes_B \id_T$ is an isomorphism, we see that $\hat f$ is also a right-inverse for $\alpha$, and hence $\alpha$ is an isomorphism. That $\beta$ is an isomorphism follows along the same lines.
\end{proof}

From the proof we see that the cospan $T$ in \eqref{eq:invertible-cospan-1-T} is 2-isomorphic to $I(\beta^{-1} \circ \alpha)$. Thus every 1-isomorphism lies in the essential image of $I$. 

\begin{rem} \label{rem:algkx->CAlgux-functor}
An algebra isomorphism $f : A \to B$, when restricted to $Z(A)$, provides an isomorphism $f|_{Z(A)} : Z(A) \to Z(B)$. This gives a functor from $\alg(k)^{1,0}$ to $\alg(k)_\mathrm{com}^{1,0}$, which, when composed with $I$, gives a functor
\be \label{eq:Z-alg^x-CAlg^x}
  \alg(k)^{1,0} \xrightarrow{~Z~} \alg(k)_\mathrm{com}^{1,0} \xrightarrow{~I~} \CAlg(k)^{2,0} \ .
\ee
In theorem \ref{thm:alg->CAlg-laxfun} below, we will extend this beyond the groupoid case to a lax functor $Z : \alg(k) \to \CAlg(k)$. 
As an aside, note that the composed functor in \eqref{eq:Z-alg^x-CAlg^x} is neither full nor faithful (isomorphic centres do not imply isomorphic algebras, and different algebra isomorphisms may restrict to the same map on the centre). However, $I :  \alg(k)_\mathrm{com}^{1,0} \to \CAlg(k)^{2,0}$ {\em is} an equivalence since for invertible 1- and 2-morphisms, $I$ is an equivalence on the morphism categories (as those morphism categories which lie in the image of $I$ only contain identity 2-morphisms), and on objects it is just the identity.
\end{rem}

\subsection{Functorial centre, version 1}

Given two not necessarily commutative algebras $A$, $B$ and an algebra homomorphism $f : A \to B$, we define the {\em centraliser} $Z_{A,B}(f)$ to be the centraliser of the image of $f$ in $B$,
\be \label{eq:ZAB-def}
  Z_{A,B}(f) = \big\{\,b \in B \,\big|\, f(a)\,b = b\,f(a) \, \text{ for all } a \in A \, \big\} \ .
\ee 
Let $\iota : Z(B) \to B$ be the embedding map and denote the restriction of $f$ to $Z(A)$ also by $f$.

\begin{lem}
Let $A,B$ be algebras and $f : A \to B$ an algebra homomorphism. Then
\be \label{eq:ZAB-cospan}
\cospd{Z(B)}\iota{Z_{A,B}(f)}f{Z(A)}
\ee
is a cospan of commutative algebras.
\end{lem}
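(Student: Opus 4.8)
The plan is to verify that the diagram in \eqref{eq:ZAB-cospan} consists of well-defined algebra homomorphisms between commutative algebras, and that the images of both maps land in the centre of the middle term, as required by definition \ref{def:cospan}. First I would check that $Z_{A,B}(f)$ is a subalgebra of $B$. This is routine: if $b,b' \in Z_{A,B}(f)$ then for all $a \in A$ we have $f(a)(bb') = (f(a)b)b' = (bf(a))b' = b(f(a)b') = b(b'f(a)) = (bb')f(a)$, so $bb' \in Z_{A,B}(f)$; linearity is clear, and $1_B$ commutes with everything so $1_B \in Z_{A,B}(f)$. Thus $Z_{A,B}(f)$ is a unital associative (not necessarily commutative) algebra, which is exactly what the middle term of a cospan is allowed to be.

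Next I would verify that the two maps make sense as algebra homomorphisms into $Z_{A,B}(f)$. For the right leg, the restriction of $f$ to $Z(A)$ maps into $Z_{A,B}(f)$: for $z \in Z(A)$ and any $a \in A$, we have $f(a)f(z) = f(az) = f(za) = f(z)f(a)$, so $f(z) \in Z_{A,B}(f)$. Since $f$ is an algebra homomorphism, so is its restriction, and $Z(A)$ is commutative by definition of the centre. For the left leg, I must check that the embedding $\iota : Z(B) \to B$ actually lands in $Z_{A,B}(f)$: for $z \in Z(B)$, $z$ commutes with every element of $B$, in particular with $f(a)$ for all $a \in A$, so $z \in Z_{A,B}(f)$. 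The inclusion $Z(B) \hookrightarrow Z_{A,B}(f)$ is an algebra homomorphism, and $Z(B)$ is commutative.

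The remaining, and genuinely essential, point is the cospan condition that the images of $\iota$ and of $f|_{Z(A)}$ lie in the centre $Z(Z_{A,B}(f))$ of the middle algebra. For $\iota$: an element $z \in Z(B)$ commutes with all of $B$, hence in particular with every element of the subalgebra $Z_{A,B}(f) \subseteq B$, so $\iota(z) \in Z(Z_{A,B}(f))$. For $f|_{Z(A)}$: given $z \in Z(A)$ and an arbitrary $b \in Z_{A,B}(f)$, the defining property of $Z_{A,B}(f)$ gives $f(z)b = bf(z)$ (since $f(z)$ is of the form $f(a)$ with $a = z \in A$), so $f(z)$ commutes with every element of $Z_{A,B}(f)$, i.e.\ $f(z) \in Z(Z_{A,B}(f))$. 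This last verification is where the definition of $Z_{A,B}(f)$ is used in an essential way — it is precisely engineered so that the image of $f(A)$, and a fortiori $f(Z(A))$, is central in the centraliser. With both legs shown to be algebra homomorphisms from commutative algebras into the centre of $Z_{A,B}(f)$, all conditions of definition \ref{def:cospan} are met, and \eqref{eq:ZAB-cospan} is a cospan of commutative algebras. I do not expect any serious obstacle here; the only subtlety is keeping straight the distinction between $Z_{A,B}(f)$ being a subalgebra of $B$ versus its own centre $Z(Z_{A,B}(f))$, which is what the two centrality checks address.
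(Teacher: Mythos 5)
Your proof is correct and follows essentially the same route as the paper's: you verify $f(Z(A))\subset Z_{A,B}(f)$ via the identical computation $f(a)f(z)=f(az)=f(za)=f(z)f(a)$, and the centrality of both images in $Z_{A,B}(f)$ by the same immediate observations. The only difference is that you spell out a few routine points (that $Z_{A,B}(f)$ is a unital subalgebra, that $Z(B)\subset Z_{A,B}(f)$) which the paper leaves implicit.
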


\begin{proof}
Since $Z(B) \subset Z_{A,B}(f)$, $\iota$ is an algebra map which maps to the centre of $Z_{A,B}(f)$. Next we check that the image of $Z(A)$ under $f : A \to B$ lies in $Z_{A,B}(f)$. Given $z \in Z(A)$ set $b = f(z)$. For all $a \in A$ we have $f(a)b = f(a)f(z) = f(az) = f(za) = f(z)f(a) = bf(a)$. Thus $f(z) \in Z_{A,B}(f)$. It is then immediate that $f(Z(A))$ lies in the centre of $Z_{A,B}(f)$.
\end{proof}

\begin{rem} \label{rem:ZAB(f)-HomBfBf}
In the more restrictive setting of Frobenius algebras with trace pairing, the cospan \eqref{eq:ZAB-cospan} has actually already appeared in disguise in the lattice TFT construction. Consider the cospan forming the top of the diamond of maps in figure \ref{fig:2-digram-from-TFT}. By definition \eqref{eq:def-TFT-T-on-S1} and lemma \ref{lem:ho-hom=ho-cohom}, $T(O(a)) = \cytens{A_a} A_a = Z(A_a)$ and $T(O(b)) = Z(A_b)$. For the top entry we have $T(O(x^* \circ x)) = \cytens{A_b} X_x \otimes_{A_a} X_x^* \cong \Hom_{A_b|A_a}(X_x,X_x)$, where we used lemma \ref{lem:YxX*-Hom(X,Y)}. To make the connection to \eqref{eq:ZAB-cospan}, take $A = A_a$, $B = A_b$ and let $f : A \to B$ be an algebra map. For $X_x$ take the bimodule $B_f$ defined in section \ref{sec:bicat-alg-2cat-def}. The map $\phi$ in
\be
  \cospdm{Z(B)}\iota{Z_{A,B}(f)}f{Z(A)}{\mathrm{act}}{\Hom_{B|A}(B_f,B_f)}{\mathrm{act}}{\phi}
  \quad ,
\ee
defined as $\phi(b) := (u \mapsto u\cdot b)$, provides an isomorphism of cospans. Here `act' refers to the map that takes $b \in Z(B)$ to the bimodule map $u \mapsto b.u$; $u \in B_f$ (resp.\ $a \in Z(A)$ to $u \mapsto u.a$). We omit the details.
\end{rem}

\begin{lem}\label{lem:m_fg-def}
Let $A \xrightarrow{f} B \xrightarrow{g} C$ be algebra maps. 
The map $m_{g,f} : Z_{B,C}(g) \otimes_{Z(B)} Z_{A,B}(f) \to Z_{A,C}(g \circ f)$ given by $u \otimes_{Z(B)} v \mapsto u \cdot g(v)$ is a morphism of cospans
\be \label{eq:m_gf-def}
\raisebox{2.1em}{\small \xymatrix@R=0.8em@C=1.5em{
& Z_{B,C}(g) \otimes_{Z(B)} Z_{A,B}(f) \ar[dd]^{m_{g,f}} \\
Z(C) \ar[ur]^(.3){\iota \otimes_B 1} \ar[dr]_(.35){\iota} && Z(A) \ar[ul]_(.3){1 \otimes_B f} \ar[dl]^(.35){g \circ f} \\
& Z_{A,C}(g \circ f) }}
\ee
\end{lem}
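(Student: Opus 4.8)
The plan is to verify three things in turn: that $m_{g,f}$ is well-defined (its values land in $Z_{A,C}(g\circ f)$ and it respects the balancing over $Z(B)$), that it is a unital algebra homomorphism, and that the two triangles in \eqref{eq:m_gf-def} commute. The whole argument runs parallel to lemma \ref{lem:alg->Alg}(i); the only extra care needed is to track which elements commute with which, since one now works inside the centralisers rather than the full algebras.

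First I would settle well-definedness. To see that the image lies in the correct centraliser, take $u\in Z_{B,C}(g)$, $v\in Z_{A,B}(f)$ and any $a\in A$: applying the algebra map $g$ to $f(a)v=vf(a)$ shows that $g(v)$ commutes with $(g\circ f)(a)=g(f(a))$, while $u$ commutes with all of $g(B)$ and hence with $(g\circ f)(a)$; combining these, $u\cdot g(v)$ commutes with $(g\circ f)(a)$, so $u\cdot g(v)\in Z_{A,C}(g\circ f)$. For the balancing, I recall that the right $Z(B)$-action on $Z_{B,C}(g)$ is $u.z=u\cdot g(z)$ and the left $Z(B)$-action on $Z_{A,B}(f)$ is $z.v=\iota(z)\cdot v=z\cdot v$; then the auxiliary map $\bar m(u\otimes v)=u\cdot g(v)$ satisfies $\bar m(u.z\otimes v)=u\cdot g(z)g(v)=u\cdot g(zv)=\bar m(u\otimes z.v)$ by multiplicativity of $g$. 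Hence $\bar m$ descends to $m_{g,f}$ on $Z_{B,C}(g)\otimes_{Z(B)}Z_{A,B}(f)$.

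Next I would check the algebra-map property. The composite cospan carries the product $(u\otimes_{Z(B)}v)(u'\otimes_{Z(B)}v')=(uu')\otimes_{Z(B)}(vv')$, which is well-defined because $g(Z(B))$ is central in $Z_{B,C}(g)$. Applying $m_{g,f}$ to the left-hand side gives $uu'\,g(v)g(v')$, whereas the product of the images is $u\,g(v)\,u'\,g(v')$; these coincide because $u'\in Z_{B,C}(g)$ commutes with $g(v)\in g(B)$. Unitality is immediate from $g(1_B)=1_C$.

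Finally I would verify the two triangles (writing $\otimes_B$ for $\otimes_{Z(B)}$ as in the diagram). Along the left edge, $c\in Z(C)$ maps to $\iota(c)\otimes_B 1_B$ and then to $\iota(c)\cdot g(1_B)=c=\iota(c)$, matching the structure map $\iota\colon Z(C)\to Z_{A,C}(g\circ f)$. Along the right edge, $a\in Z(A)$ maps to $1_C\otimes_B f(a)$ and then to $1_C\cdot g(f(a))=(g\circ f)(a)$, matching $g\circ f\colon Z(A)\to Z_{A,C}(g\circ f)$. Both triangles commute, so $m_{g,f}$ is indeed a morphism of cospans. The only genuinely non-formal point is tracking the centrality relations that keep $\bar m$ inside $Z_{A,C}(g\circ f)$ and compatible with the balancing; everything else is bookkeeping, and I do not anticipate any serious obstacle.
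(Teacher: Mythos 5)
Your proposal is correct and follows essentially the same route as the paper: well-definedness via the balancing over $Z(B)$ and the centrality of $u$ in $g(B)$ together with $f(a)v=vf(a)$, the algebra-map property via $u'g(v)=g(v)u'$, and a direct check of the two triangles. The only difference is cosmetic — you spell out the balancing computation that the paper delegates to lemma \ref{lem:alg->Alg}(i) — so no gap remains.
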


\begin{proof}
Abbreviate $m \equiv m_{g,f}$ and $Y \equiv Z(B)$. 
\\[.3em]
\nxt {\em $m$ is well-defined:} 
That $m$ gives a well-defined map to $C$ is the same argument as in the proof of lemma \ref{lem:alg->Alg}\,(i). That the image of $m$ lies in the centraliser $Z_{A,C}(g \circ f)$ amounts to, for all $a \in A$,
\be\begin{array}{ll}
g(f(a)) \cdot m(u \otimes_Y v) &\!\!= g(f(a)) \cdot u \cdot g(v) \overset{(1)}{=} u \cdot g(f(a)) \cdot g(v) =  u \cdot g(f(a)v) 
\\[.5em]
  &\!\! \overset{(2)}= u \cdot g(vf(a)) = u \cdot g(v) \cdot g(f(a)) = m(u \otimes_Y v) \cdot g(f(a)) \ , 
\end{array}
\ee
where (1) follows as $u \in Z_{B,C}(g)$ commutes with anything in the image of $g$, and (2) follows analogously from $v \in Z_{A,B}(f)$.
\\[.3em]
\nxt {\em $m$ is an algebra map:}  We have
\be\begin{array}{ll}
  m\big( (u \otimes_Y v) \cdot (u' \otimes_Y v') \big)
  &\!\!= m\big( (uu') \otimes_Y (vv') \big)
  = uu'g(vv')
  = uu'g(v)g(v')
\\[.5em]
  &\!\!\overset{(*)}= ug(v)u'g(v')
  = m(u \otimes_Y v) \cdot m(u' \otimes_Y v') \ .
\end{array}
\ee
The only perhaps not immediately obvious step is $(*)$, which follows since by definition for all $u' \in Z_{B,C}(g)$ and $v \in B$ we have $u' g(v) = g(v) u'$.
\\[.3em]
\nxt {\em The triangles commute:} Acting on arbitrary elements $c \in Z(C)$ and $a\in Z(A)$, commutativity of the two triangles amounts to the identities $c = m(c \otimes_Y 1_B)$ and $g(f(a)) = m(1_B \otimes_Y f(a))$, both of which are immediate upon substituting the definition of $m$.
\end{proof}

We have now collected the ingredients to state the second main result of this note.

\begin{thm} \label{thm:alg->CAlg-laxfun}
The assignment
\be \label{eq:alg->CAlg-defn}
\begin{array}{rrcl}
Z :& \alg(k) &~~\longrightarrow~~&\CAlg(k)
\\[.5em]
&
B \xleftarrow{f} A &\longmapsto& 
\cospd{Z(B)}{\iota}{Z_{A,B}(f)}{f}{Z(A)}
\end{array}
\ee
defines a lax functor. The unit transformations are identities and the multiplication transformations are given by $m_{g,f}$.
\end{thm}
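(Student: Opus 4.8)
The plan is to verify directly the data and axioms of a lax functor $Z:\alg(k)\to\CAlg(k)$ following the conventions of appendix \ref{app:bicategories}, leaning heavily on the lemmas already established. The decisive simplification is that $\alg(k)$ is regarded as a bicategory with only identity $2$-morphisms and with strictly associative, strictly unital composition of algebra maps. Consequently the associator and unitors on the source side are identities, each hom-category $\alg(k)(A,B)$ is discrete, and the real content is concentrated in the compositors $m_{g,f}$ together with their coherence against the associativity and unit constraints of $\CAlg(k)$.

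First I would record that the assignment is well defined: on objects $Z(A)$ is a commutative algebra, and on a $1$-morphism $f:A\to B$ the tuple $Z(f)$ is a genuine cospan of commutative algebras, which is exactly the content of the lemma establishing \eqref{eq:ZAB-cospan}. Because $\alg(k)(A,B)$ is discrete, the local assignment $f\mapsto Z(f)$, $\id_f\mapsto\id_{Z(f)}$ is automatically a functor $\alg(k)(A,B)\to\cosp(Z(A),Z(B))$, and the required naturality of the compositors in $f$ and $g$ is vacuous. Next, the compositor $m_{g,f}:Z(g)\circledcirc Z(f)\to Z(g\circ f)$ is a $2$-morphism of $\CAlg(k)$, i.e.\ a morphism of cospans, by lemma \ref{lem:m_fg-def}; here one should check that the cospan structure on $Z(g)\circledcirc Z(f)$ produced by $\circledcirc_{Z(C),Z(B),Z(A)}$ has precisely the legs $\iota\otimes_{Z(B)}1$ and $1\otimes_{Z(B)}f$ that appear as the source in \eqref{eq:m_gf-def}, so that lemma \ref{lem:m_fg-def} applies verbatim.

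I would then treat the unit. Since $Z_{A,A}(\id_A)$ is the centraliser of all of $A$ inside $A$, it equals $Z(A)$, and both legs $\iota$ and $\id_A|_{Z(A)}$ are identities; hence $Z(\id_A)$ is literally the identity cospan \eqref{eq:identity-cospan}, which justifies taking identity unit transformations. The unit coherence then reduces to showing that $m_{\id_B,f}$ and $m_{f,\id_A}$ coincide with the left and right unitor isomorphisms of $\CAlg(k)$. Substituting the definition of $m$ gives the map $u\otimes_{Z(B)}v\mapsto uv$ on $Z(B)\otimes_{Z(B)}Z_{A,B}(f)$ and the map $u\otimes_{Z(A)}v\mapsto u\cdot f(v)$ on $Z_{A,B}(f)\otimes_{Z(A)}Z(A)$; recalling that the left $Z(B)$-action on $Z_{A,B}(f)$ is through $\iota$ and the right $Z(A)$-action through $f$, these are exactly the canonical isomorphisms $Z(B)\otimes_{Z(B)}(-)\cong(-)$ and $(-)\otimes_{Z(A)}Z(A)\cong(-)$.

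The remaining and central axiom is the associativity coherence for $A\xrightarrow{f}B\xrightarrow{g}C\xrightarrow{h}D$, relating the two reductions of $(Z(h)\circledcirc Z(g))\circledcirc Z(f)$ to $Z(h\circ g\circ f)$ through the associator of $\CAlg(k)$ and the maps $m$; this is the analogue of the computation in lemma \ref{lem:alg->Alg}(ii), and I expect it to be the main step, though not a difficult one. I would evaluate on a generator $u\otimes_{Z(C)}v\otimes_{Z(B)}w$: the route through $m_{h,g}$ then $m_{h\circ g,f}$ yields $u\cdot h(v)\cdot h(g(w))$, while the route through $m_{g,f}$ then $m_{h,g\circ f}$ yields $u\cdot h\!\left(v\cdot g(w)\right)$, and these agree because $h$ is an algebra homomorphism; the source-side associator $Z(\text{assoc})$ is the identity since composition in $\alg(k)$ is strictly associative. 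The only point demanding care throughout is the bookkeeping — that each intermediate element lands in the correct centraliser and that every tensor product is balanced over the correct centre $Z(B)$ or $Z(C)$ — but this is exactly what lemma \ref{lem:m_fg-def} guarantees. Having checked all the lax-functor axioms, I would conclude that $Z$ is a lax functor with the stated units and compositors.
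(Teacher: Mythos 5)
Your proposal is correct and follows essentially the same route as the paper: the paper's proof simply notes that the associativity and unit verifications are identical to those in lemma \ref{lem:alg->Alg}\,(ii), which is precisely the computation you carry out (the two branches giving $u\cdot h(v)\cdot h(g(w))$ versus $u\cdot h(v\cdot g(w))$, equal since $h$ is an algebra map, plus the immediate unit checks). Your additional observations — that $Z_{A,A}(\id_A)=Z(A)$ makes the unit cospan literally the identity, and that naturality of $m_{g,f}$ is vacuous because the hom-categories of $\alg(k)$ are discrete — are accurate and consistent with what the paper leaves implicit.
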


In other words, on objects the lax functor acts as $A \mapsto Z(A)$, on 1-morphisms $A \to B$ as $f \mapsto Z_{A,B}(f)$, and all 2-morphisms in $\alg(k)$ are identities, which get mapped to identity 2-morphisms in $\CAlg(k)$. 

\begin{proof}
It remains to verify the associativity and unit properties. The argument is identical to that in the proof of lemma \ref{lem:alg->Alg}\,(ii).
\end{proof}

\begin{rem}
(i) The map $m_{g,f}$ in lemma \ref{lem:m_fg-def} is typically not an isomorphism. For example, take $A = C = k \oplus k$ and $B$ to be upper triangular $2{\times}2$-matrices. For the map $f$ we take the diagonal embedding and for $g$ the projection onto the diagonal part. By commutativity of the underlying algebra we see $Z(A)=Z(C)=Z_{B,C}(g) = Z_{A,C}(g \circ f) = k \oplus k$. The remaining algebras are $Z(B) \cong k$ (multiples of the identity matrix) and $Z_{A,B}(f) \cong k \oplus k$, the diagonal $2{\times}2$-matrices. Thus $Z_{B,C}(g) \otimes_{Z(B)} Z_{A,B}(f) \cong (k \oplus k) \otimes_k (k \oplus k)$ while $Z_{A,C}(g \circ f) = k \oplus k$.
Therefore, we only have a lax functor.
\\[.3em]
(ii) If we restrict $Z$ to commutative algebras $\alg(k)_\mathrm{com}$ we obtain the functor $I$ from lemma \ref{lem:commutative-algebra-embedding} (since then $Z(B)=B$, $Z(A)=A$ and $Z_{A,B}(f) = B$ for all $f$). In this sense, $Z$ is an extension of $I$ to all algebras; the price to pay is that we have to work with bicategories and the functor becomes lax.
\end{rem}

\subsection{The bicategory of commutative algebras, version 2}

Figure \ref{fig:2-digram-from-TFT} suggests that there is an enlargement of $\CAlg(k)$, where the 2-morphisms are also replaced by cospans. The lattice TFT construction suggests that this enlargement becomes relevant if one wants to extend the centre functor from $\alg(k)$ to $\Alg(k)$. This is the topic of the present subsection, as well as of the next one.

\begin{defn} \label{def:2-diagram+3-cell}  
(i) A {\em 2-diagram} from a cospan $S : A \to B$ to $T : A \to B$ is 
a triple $(g,M,f)$, where $M$ is
a $T$-$S$-bimodule, $f$ is a right $S$-module map and $g$ is a left $T$-module map such that
the two squares in the diagram
\be  \label{eq:2diagram-cond-1}
\cospdd{B}{\beta_1}{{S}{f}}{\alpha_1}{A}{\beta_2}{{T}{g}}{\alpha_2}{M}
\ee
commute, and such that the induced left and right action of $A$ on $M$ agree, and those of $B$ on $M$ agree, i.e.\ that for all $a \in A$, $b \in B$, $m \in M$
\be  \label{eq:2diagram-cond-2}
\alpha_2(a).m = m.\alpha_1(a) \quad , \quad
\beta_2(b).m = m.\beta_1(b) \ .
\ee
We will also abbreviate $M : S \to T$.
\\[.3em]
(ii) A {\em 3-cell} between two 2-diagrams $(g,M,f)$ and $(g',M',f')$ is a $T$-$S$-bimodule map $\delta : M \to M'$ such that the following diagram commutes:
\be
\raisebox{3em}{\small\xymatrix@R=1.5em@C=1em{
& S \ar[ld]_f \ar[rd]^{f'} & \\ M \ar[rr]^\delta && M' \\
& T \ar[lu]^g \ar[ru]_{g'} }}
\ee
(iii) The {\em category of 2-diagrams} $\tdiag_{AB}(S,T)$ has 2-diagrams as objects and 3-cells as morphisms. The identity 3-cell for the object $(g,M,f)$ is the identity map on $M$, the composition of 3-cells is given by composition of bimodule maps.
\end{defn}

\begin{rem} \label{rem:tricat-conj}
(i) 
The category $\tdiag_{AB}(S,T)$ can be used to define a bicategory $\Cosp(A,B)$ whose objects are cospans of commutative algebras from $A$ to $B$ and whose morphism categories are $\tdiag_{AB}(S,T)$; this will be done in detail in \cite{in-prep}. Conjecturally, there is a tricategory $\CALG(k)$ whose objects are commutative algebras and whose morphism bicategories are given by $\Cosp(A,B)$; we hope to return to this in the future.
\\[.3em]
(ii)
The conjectural tricategory $\CALG(k)$ of part (i) is similar (but not equal) in structure to the tricategory of conformal nets described in \cite{Bartels:2009ts}. 
In \cite{Bartels:2009ts}, objects are conformal nets. An object in $\CALG(k)$, i.e.\ a commutative algebra, could be thought of as a `topological net' which assigns the same algebra to every interval; this algebra is then necessarily commutative (but it is not a conformal net: the algebra does not have to be von Neumann and in general it violates the split property).
According to \cite[Def.\,3]{Bartels:2009ts}, 1-morphisms are `defects', i.e.\ conformal nets for bicoloured intervals. In the present language, this corresponds to the data of a cospan $(A,\alpha,T,\beta,B)$: evaluating the net on mono-coloured subintervals produces the two commutative algebras $A,B$, for a bicoloured interval one obtains $T$, and the inclusion of a mono-- into a bicoloured interval provides the two maps $\alpha,\beta$.
In \cite[Def.\,4]{Bartels:2009ts}, 2-morphisms are sectors between the defect nets -- this means a Hilbert space with compatible actions of all four conformal nets involved: the two defect nets and the two conformal nets which the defects go between. This provides all the data and constraints of a 2-diagram as in \eqref{eq:2diagram-cond-1} {\em except} for the maps $f$ and $g$, which are not part of the setting of \cite{Bartels:2009ts}. We will need these two maps for the centre functor, see lemma \ref{lem:centre-2-diagram} below.
The third level of categorical structure is constructed in \cite{Bartels:2009ts} by making conformal nets a bicategory internal to symmetric monoidal categories. 
\end{rem}

Part (i) of the above remark motivates the notation $\Cospu(A,B)$ for the category whose objects are cospans from $A$ to $B$ and whose morphisms are isomorphism classes of 2-diagrams. Composition in $\Cospu(A,B)$ is given by
\be \label{eq:2-diag-vertical}
\begin{array}{rccccc}
\odot :& \Cospu(A,B)(S,T) &\hspace{-.5em}\times\hspace{-.5em}& \Cospu(A,B)(R,S) &\longrightarrow& \Cospu(A,B)(R,T)
\\
&\Bigg(
\cospdd B{\beta_2}{S g}{\alpha_2}A{\beta_3}{T{g'}}{\alpha_3}N
&\hspace{-.5em},\hspace{-.5em}&
\cospdd B{\beta_1}{R f}{\alpha_1}A{\beta_2}{S{f'}}{\alpha_2}M
\Bigg)
&\longmapsto&
\raisebox{2.8em}{\small \xymatrix@R=1.5em@C=2em{
& R \ar[d]_{g(1) \otimes_S f} \\
B \ar@/^10pt/[ur]^{\beta_1} \ar@/_10pt/[dr]_{\beta_3} & N \otimes_S M & A \ar@/_10pt/[ul]_{\alpha_1} \ar@/^10pt/[dl]^{\alpha_3} \\
& T \ar[u]_{g' \otimes_S f'(1)}  }}
\end{array}
\quad .
\ee
The right hand side is again a 2-diagram. Let us check explicitly the left square in \eqref{eq:2diagram-cond-1} and the first condition in \eqref{eq:2diagram-cond-2}. For $b \in B$,
\be\begin{array}{ll}
g(1) \otimes_S f(\beta_1(b))
&\overset{(1)}=
g(1) \otimes_S f'(\beta_2(b))
\overset{(2)}=
g(1) \otimes_S \beta_2(b).f'(1)
\overset{(3)}=
g(1).\beta_2(b) \otimes_S f'(1)
\\[.5em] &
\overset{(4)}=
g(\beta_2(b)) \otimes_S f'(1)
\overset{(5)}=
g'(\beta_3(b)) \otimes_S f'(1) \ .
\end{array}
\ee
Here, (1) is commutativity of the left square in the 2-diagram $(f',M,f)$, (2) the fact that $f'$ is left $S$-module map, (3) the property of $\otimes_S$, (4) follows since $g$ is a right $S$-module map, and finally (5) is commutativity of the left square in the 2-diagram $(g',N,g)$. Next, that the left action of $B$ on $N \otimes_S M$ agrees with the right action of $B$ follows from
\be\begin{array}{ll}
  b.(n \otimes_S m) 
  &=   (\beta_3(b).n) \otimes_S m
  =   (n.\beta_2(b)) \otimes_S m
\\[.5em] &
  =   n \otimes_S (\beta_2(b).m)
  =   n \otimes_S (m.\beta_1(b))
  =   (n \otimes_S m).b
\end{array}
\ee
The unit morphism in $\Cospu(A,B)(T,T)$ is
\be \label{eq:unit-cospan}
  \cospdd{B}{\beta}{T\id}{\alpha}{A}{\beta}{T\id}{\alpha}{T}
\ee

\begin{lem} \label{lem:2-diag-inv}
A 2-diagram $(g,M,f)$ between $S,T : A \to B$ is invertible if and only if both $f$ and $g$ are invertible.
\end{lem}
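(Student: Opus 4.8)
The plan is to mirror the proof of lemma \ref{lem:invertible-cospan-1}, with the composition of cospans replaced by the composition $\odot$ of 2-diagrams in \eqref{eq:2-diag-vertical} and the structure maps $\alpha,\beta$ replaced by the module maps $f\colon S\to M$ and $g\colon T\to M$. Throughout, \emph{invertible} for the 2-diagram means invertible as a $1$-morphism in $\Cospu(A,B)$, i.e.\ the existence of a 2-diagram $(g',P,f')\colon T\to S$ together with 3-cells identifying the composites $M\otimes_S P$ and $P\otimes_T M$ with the units \eqref{eq:unit-cospan} on $T$ and $S$ respectively; \emph{$f$ invertible} means invertible as a right $S$-module map, and \emph{$g$ invertible} as a left $T$-module map.

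For the direction ``$\Leftarrow$'', I would assume $f$ and $g$ invertible, so that $f$ exhibits $M$ as free of rank one as a right $S$-module (generator $f(1_S)$) and $g$ exhibits it as free of rank one as a left $T$-module (generator $g(1_T)$). I would then build the candidate inverse $P\colon T\to S$ on the underlying vector space $M$, transporting the right $S$-action through $f^{-1}$ to a right $T$-action and the left $T$-action through $g^{-1}$ to a left $S$-action, and taking its two structure maps to be $g$ and $f$ in the roles swapped relative to $(g,M,f)$. The conditions \eqref{eq:2diagram-cond-2} for $P$ follow from those for $M$ together with the commuting squares of $(g,M,f)$. One then evaluates $M\otimes_S P$ and $P\otimes_T M$ via \eqref{eq:2-diag-vertical}: because $f,g$ are isomorphisms the relevant tensor products collapse, and the resulting linear isomorphisms are to be checked to be bimodule maps compatible with all structure maps, hence 3-cells to the units.

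For ``$\Rightarrow$'', I would name an inverse $(g',P,f')$ and read off invertibility from the 3-cell isomorphisms $\delta\colon M\otimes_S P\to T$ and $\eta\colon P\otimes_T M\to S$, exactly as in lemma \ref{lem:invertible-cospan-1}. By Definition \ref{def:2-diagram+3-cell} these intertwine the structure maps of the composite 2-diagrams (built out of $f,g,f',g'$ as in \eqref{eq:2-diag-vertical}) with the identity structure maps of the units. Composing $\delta$ with the appropriate structure map of $M\odot P$ yields an identity on $T$, exhibiting a one-sided inverse for $g$; feeding this back through $\delta^{-1}$ and using the property $\pi_\otimes\circ e_\otimes=\id$ of the tensor-product projectors upgrades it to a two-sided inverse, and symmetrically one obtains invertibility of $f$ from $\eta$.

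The main obstacle I anticipate is the bookkeeping in the ``$\Leftarrow$'' direction: verifying that the transported actions genuinely make the underlying space an $S$-$T$-bimodule and that $M\otimes_S P$ and $P\otimes_T M$ collapse to $T$ and $S$ as \emph{bimodules} (not merely vector spaces), compatibly with all four structure maps $\alpha_i,\beta_i$. Concretely this is the point where one must relate the two generators $f(1_S)$ and $g(1_T)$ and confirm that the collapse is a 3-cell rather than just a linear isomorphism; the agreement \eqref{eq:2diagram-cond-2} of the $A$- and $B$-actions is precisely what guarantees the maps $\alpha_i,\beta_i$ survive the identification. The ``$\Rightarrow$'' direction, by contrast, should be essentially formal once the inverse is named, following lemma \ref{lem:invertible-cospan-1} line by line.
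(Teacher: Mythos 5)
Your overall strategy is the right one: the paper gives no proof of this lemma beyond the remark that it ``is similar to that of lemma \ref{lem:invertible-cospan-1} and we omit the details'', and both of your directions follow that template, with $\odot$ replacing $\circledcirc$ and the structure maps $f,g$ replacing $\alpha,\beta$. The `$\Rightarrow$' direction is indeed the line-by-line transcription you describe (a one-sided inverse of $g$ comes from composing the 3-cell $\delta\colon M\otimes_S P\to T$ with $(-)\otimes_S g'(1)$, and the upgrade to a two-sided inverse uses the second identity together with the one-sided inverse of the other structure map supplied by $\eta$). One correction there: the identity $\pi_\otimes\circ e_\otimes=\id$ belongs to the Frobenius-with-trace-pairing setting of section \ref{sec:alg-prel} and is not available for the general algebras of section \ref{sec:centre}; the upgrade in lemma \ref{lem:invertible-cospan-1} is done purely with the factorisation $1_S\otimes_B\alpha=(\beta'\otimes_B\id)\circ(1_B\otimes_B\alpha)$, and that is what you should mimic.

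For `$\Leftarrow$' your plan can be completed, but as written it does not type-check: $f^{-1}\colon M\to S$ cannot convert the right $S$-action on $M$ into a right $T$-action. The missing ingredient is the map $\phi:=g^{-1}\circ f\colon S\to T$. Evaluating the left commuting square of \eqref{eq:2diagram-cond-1} at $1_B$ gives $f(1_S)=g(1_T)$, and from this one checks that $\phi$ is a unital algebra isomorphism commuting with the cospan structure maps, i.e.\ an invertible 2-morphism in $\CAlg(k)$. The closest analogue of the proof of lemma \ref{lem:invertible-cospan-1} is then not to build $P$ by hand but to observe that $g$ itself is an invertible 3-cell from $\bfI(\phi)$ to $(g,M,f)$ (transport the right $S$-action of $M$ to $T$ along $g^{-1}$), so that $(g,M,f)=\bfI(\phi)$ in $\Cospu(A,B)$ and invertibility follows from the local functoriality of $\bfI$ -- exactly as lemma \ref{lem:invertible-cospan-1} reduces the cospan to $I(\beta^{-1}\circ\alpha)$. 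Your direct construction of $P$ amounts to the same thing once $\phi$ is in hand (the two transported actions on $M$ commute precisely because $\phi$ is multiplicative), but without isolating $\phi$ the verifications you flag as ``bookkeeping'' cannot actually be carried out.
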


The proof is similar to that of lemma \ref{lem:invertible-cospan-1} and we omit the details.

\begin{lem} \label{lem:Cospu-composition}
The assignment
\be \label{eq:cospu-compos-functor} \small
\begin{array}{rccccc}
\circledcirc_{C,B,A} :& \Cospu(B,C) &\hspace{-.5em}\times\hspace{-.5em}& \Cospu(A,B) &\longrightarrow& \Cospu(A,C)
\\
&\Bigg(
\cospdd{C}{\gamma}{T g}{\beta_2}{B}{\gamma'}{{T'}{g'}}{\beta_2'}{N}
&\hspace{-.5em},\hspace{-.5em}&
\cospdd{B}{\beta_1}{S f}{\alpha}{A}{\beta_1'}{{S'}{f'}}{\alpha'}{M}
\Bigg)
&\longmapsto&
\cospdd{C}{\gamma \otimes_B 1}{ {T \otimes_B S}{g \otimes_B f} }{1 \otimes_B \alpha}{A}{\gamma' \otimes_B 1}{ {T' \otimes_B S'}{g' \otimes_B f'} }{1 \otimes_B \alpha'}{N \otimes_B M}
\end{array}
\ee
defines a functor.
\end{lem}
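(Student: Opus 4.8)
The plan is to verify the three things required for $\circledcirc_{C,B,A}$ in \eqref{eq:cospu-compos-functor} to be a functor: that it sends a pair of 2-diagrams to a genuine 2-diagram (well-definedness), that it respects the identification of morphisms by isomorphism classes, and that it preserves identities and composition. Well-definedness on objects is already contained in the composition of cospans: by the construction in section \ref{sec:comm-alg-v1} (cf.\ lemma \ref{lem:cosp-composition}), $T\otimes_B S$ and $T'\otimes_B S'$ are cospans $A\to C$. Most of the bimodule bookkeeping runs parallel to the verification that $\odot$ lands in a 2-diagram, carried out in the computations following \eqref{eq:2-diag-vertical}.

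First I would check that $N\otimes_B M$ is a genuine 2-diagram from $T\otimes_B S$ to $T'\otimes_B S'$. The relative tensor product $N\otimes_B M$ is formed using the right $B$-action on $N$ (through the right $T$-action and $\beta_2:B\to Z(T)$) and the left $B$-action on $M$ (through the left $S'$-action and $\beta_1'$); condition \eqref{eq:2diagram-cond-2} for $N$ and for $M$ guarantees these $B$-actions are two-sided and unambiguous. One then equips $N\otimes_B M$ with componentwise actions $(t'\otimes_B s').(n\otimes_B m).(t\otimes_B s) := (t'.n.t)\otimes_B(s'.m.s)$; these descend to the balanced tensor products precisely because the images of $\alpha,\alpha',\gamma,\gamma',\beta_i,\beta_i'$ are central, by the same centrality argument that makes the multiplication on a composite cospan well-defined. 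The structure maps $g\otimes_B f$ and $g'\otimes_B f'$ are well-defined because $g,f$ (resp.\ $g',f'$) are suitably $B$-equivariant: $g$ is a right $T$-module map, hence intertwines the right $B$-action, and $f$ is a right $S$-module map which, using centrality of $\beta_1$ and condition \eqref{eq:2diagram-cond-2}, also intertwines the balancing left $B$-action on $M$. Commutativity of the two squares in \eqref{eq:2diagram-cond-1} and the two-sidedness conditions \eqref{eq:2diagram-cond-2} for $N\otimes_B M$ then follow from the corresponding data for $N$ and $M$, by the same short computations already displayed for $\odot$.

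Next, $\circledcirc_{C,B,A}$ respects isomorphism classes: if $\delta:N\to\tilde N$ and $\eta:M\to\tilde M$ are 3-cells, then $\delta\otimes_B\eta$ is a 3-cell $N\otimes_B M\to\tilde N\otimes_B\tilde M$ by functoriality of $\otimes_B$ and compatibility with the structure maps, so the class of the composite depends only on the classes of the factors. Preservation of identities is immediate: the identity on $T$ is the 2-diagram \eqref{eq:unit-cospan} with underlying bimodule $T$ and identity structure maps, and $\circledcirc_{C,B,A}$ sends $(\id_T,\id_S)$ to the 2-diagram with underlying bimodule $T\otimes_B S$ and identity structure maps, which is the identity on $T\otimes_B S$.

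The substantive step is preservation of composition, i.e.\ the interchange law. For composable 2-diagrams $N_1:T\to T'$, $N_2:T'\to T''$ in $\Cospu(B,C)$ and $M_1:S\to S'$, $M_2:S'\to S''$ in $\Cospu(A,B)$, I must exhibit an isomorphism of 2-diagrams
\be
  (N_2\otimes_{T'}N_1)\otimes_B(M_2\otimes_{S'}M_1)
  \;\cong\;
  (N_2\otimes_B M_2)\otimes_{T'\otimes_B S'}(N_1\otimes_B M_1) \ ,
\ee
which suffices since morphisms in $\Cospu$ are isomorphism classes. The underlying map is the standard middle-four interchange isomorphism for iterated relative tensor products, available here because the balancing over $B$ and the balancing over $T',S'$ involve mutually commuting central actions. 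I expect the main obstacle to be checking that this isomorphism is a $3$-cell — that it intertwines the bimodule structures and commutes with the $g\otimes_B f$-type structure maps on both sides — but this is entirely of the standard, if tedious, bimodule-algebra variety. Once the interchange isomorphism is established as a 3-cell, functoriality of $\circledcirc_{C,B,A}$ follows, completing the proof.
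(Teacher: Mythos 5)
Your proposal is correct and follows essentially the same route as the paper: the identity case is immediate, and functoriality reduces to the middle-four interchange isomorphism $(N_2\otimes_{T'}N_1)\otimes_B(M_2\otimes_{S'}M_1)\cong(N_2\otimes_B M_2)\otimes_{T'\otimes_B S'}(N_1\otimes_B M_1)$, induced by permuting tensor factors and checked to descend through the cokernel conditions and to be a 3-cell. The extra care you take with well-definedness of $N\otimes_B M$ as a 2-diagram and with compatibility on isomorphism classes of 3-cells is material the paper leaves implicit, but it does not change the argument.
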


\begin{proof}
It is evident that the functor maps a pair of identity cospans \eqref{eq:unit-cospan} to the identity cospan. To verify functoriality, choose another pair of 2-diagrams $N' : T' \to T''$ and $M' : S' \to S''$ in the product category. First composing in the product with $\odot \times \odot$ gives the pair $(N' \otimes_{T'} N , M' \otimes_{S'} M)$. Applying $\circledcirc$ yields $X := N' \otimes_{T'} N \otimes_B M' \otimes_{S'} M$. In the other order, by first applying $\circledcirc$, we obtain the two 2-diagrams $N' \otimes_B M' : T' \otimes_B S' \to T'' \otimes_B S''$ and $N \otimes_B M : T \otimes_B S \to T' \otimes_B S'$. Applying $\odot$ to this gives $Y := (N' \otimes_B M') \otimes_{T' \otimes_B S'} (N \otimes_B M)$. We claim that the isomorphism $N' \otimes N \otimes M' \otimes M \to N' \otimes M' \otimes N \otimes M$ given by permuting factors induces maps $X \to Y$ and $Y \to X$; these are then automatically inverse to each other. For example, the condition that the map $\phi : N' \otimes N \otimes M' \otimes M \to Y$ respects the tensor product over $T'$ amounts to, for $t' \in T$,
\be
\begin{array}{ll}
  \phi\big((n'.t') \otimes n \otimes m' \otimes m\big)
  &= \big((n'.t') \otimes_B m'\big) \otimes_{T' \otimes_B S'} (n \otimes_B m)
  \\[.5em]
  &= \big((n' \otimes_B m').(t' \otimes_B 1)\big) \otimes_{T' \otimes_B S'} \big(n \otimes_B m\big)
  \\[.5em]
  &= \big(n' \otimes_B m'\big) \otimes_{T' \otimes_B S'} \big((t' \otimes_B 1).(n \otimes_B m)\big)
  \\[.5em]
  &= \big(n' \otimes_B m'\big) \otimes_{T' \otimes_B S'} \big((t'.n) \otimes_B m\big)
  \\[.5em]
  &  = \phi\big(n' \otimes (t'.n) \otimes m' \otimes m\big) \ .
\end{array}
\ee
The other tensor product cokernel conditions are checked similarly. That the induced isomorphism $X \to Y$ is a 3-cell is equally straightforward.
\end{proof}

We can now define the second version of the bicategory of commutative algebras, which we denote by 
\be
  \CALGu(k) \ ;
\ee
Objects and 1-morphisms are as in $\CAlg(k)$, but for 2-morphisms we take equivalence classes of 2-diagrams. In other words, the morphism category $A \to B$ is $\Cospu(A,B)$. The composition functor is given in lemma \ref{lem:Cospu-composition}. The notation $\CALGu(k)$ is motivated by the conjectural tricategory of remark \ref{rem:tricat-conj}\,(i). The associativity and unit isomorphisms of $\CALGu(k)$ are just those of bimodules, and the required coherence conditions are satisfied for the same reason.

As compared to $\CAlg(k)$, the category $\CALGu(k)$ has more 2-morphisms. This is made precise by the observation that
\be \label{eq:I-CAlg-CALG-def}
\begin{array}{rccc}
\bfI :& \CAlg(k) &\longrightarrow&\CALGu(k)
\\[.5em]
&
\cospdm B \beta S \alpha A {\beta'} T {\alpha'} f
&\longmapsto& 
\cospdd B \beta {S f} \alpha A {\beta'} {T \id} {\alpha'} T
\end{array}
\ee
is a locally faithful functor from $\CAlg(k)$ to $\CALGu(k)$; we skip the details. By lemma \ref{lem:2-diag-inv}, each invertible morphism in $\Cospu(A,B)(S,T)$ lies in the image of $\bfI$ (we again skip the details). Therefore, the restriction
\be
\bfI : \CAlg(k)^{2,1} \xrightarrow{~~\sim~~} \CALGu(k)^{2,1}
\ee
is an equivalence of bicategories. In this sense, passing from $\CAlg(k)$ to $\CALGu(k)$ adds more non-invertible 2-morphisms.

\subsection{Functorial centre, version 2} \label{sec:fun-cent-v2}

In this subsection we will try to extend the centre functor to $\Alg(k)$. We will see that $\CALGu(k)$ is not quite good enough as a target category, and we have to restrict ourselves to appropriate subcategories of $\Alg(k)$. 

Let $A,B$ be algebras and let $X$ be a $B$-$A$-bimodule. Then
\be
\cospd{Z(B)}{\mathrm{act}}{\Hom_{B|A}(X,X)}{\mathrm{act}}{Z(A)}
\ee
is a cospan of commutative algebras. As in remark \ref{rem:ZAB(f)-HomBfBf}, `act' refers to the map that takes $b \in Z(B)$ to the bimodule map $x \mapsto b.x$ (resp.\ $a \in Z(A)$ to $x \mapsto x.a$).

\begin{lem} \label{lem:centre-2-diagram}
Let $A,B$ be algebras, let $X, Y$ be $A$-$B$-bimodules and let $f : X \to Y$ be a bimodule homomorphism. Then
\be \label{eq:centre-2-diagram}
\cospdd{Z(A)}{\mathrm{act}}{{\Hom_{A|B}(X,X)}{f \circ (-)}}{\mathrm{act}}{Z(B)}{\mathrm{act}}{{\Hom_{A|B}(Y,Y)}{(-)\circ f}}{\mathrm{act}}{\Hom_{A|B}(X,Y)}
\ee
is a 2-diagram.
\end{lem}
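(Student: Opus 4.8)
The plan is to verify one by one the conditions that Definition~\ref{def:2-diagram+3-cell}(i) places on a $2$-diagram, for the data $S=\Hom_{A|B}(X,X)$, $T=\Hom_{A|B}(Y,Y)$ and $M=\Hom_{A|B}(X,Y)$. Matching the diagram in the lemma with the template \eqref{eq:2diagram-cond-1}, the role of the left algebra $B$ there is played by $Z(A)$ and that of $A$ by $Z(B)$; the top arrow is $F:=f\circ(-):S\to M$ and the bottom arrow is $G:=(-)\circ f:T\to M$. (To avoid a clash with the bimodule map $f$ of the lemma, I write $F$ and $G$ for the two structure maps.) First I would record the bimodule structure on $M$: let $T$ act on the left of $M$ by postcomposition, $\psi.\phi=\psi\circ\phi$, and let $S$ act on the right by precomposition, $\phi.\chi=\phi\circ\chi$; associativity of composition makes $M$ a $T$-$S$-bimodule, and the algebra products of $S$ and $T$ are composition as well. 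I will take the four $\mathrm{act}$ maps to be well-defined algebra homomorphisms into the respective centres, as already recorded for the cospans just above the lemma.

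Next I would check the module-map conditions and the commutativity of the two squares. Both module-map statements are pure associativity: $F(\chi\circ\chi')=f\circ\chi\circ\chi'=F(\chi)\circ\chi'$ shows $F$ is a right $S$-module map, and $G(\psi\circ\psi')=\psi\circ\psi'\circ f=\psi\circ G(\psi')$ shows $G$ is a left $T$-module map. The only place where the hypothesis that $f$ is a bimodule map is used is in the two squares of \eqref{eq:2diagram-cond-1}. For the left square, evaluating on $a\in Z(A)$, the map $F\circ\mathrm{act}$ sends $a$ to $f\circ(x\mapsto a.x)=(x\mapsto f(a.x))=(x\mapsto a.f(x))$, using that $f$ respects the left $A$-action, while $G\circ\mathrm{act}$ sends $a$ to $(y\mapsto a.y)\circ f=(x\mapsto a.f(x))$; the two agree. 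The right square is the same computation with the right $B$-action of $f$ in place of the left $A$-action.

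Finally I would verify the two compatibility identities \eqref{eq:2diagram-cond-2}. The left action of $Z(A)$ on $M$ factors through $\mathrm{act}:Z(A)\to T$ followed by postcomposition, giving $a.\phi=(x\mapsto a.\phi(x))$, whereas the right action factors through $\mathrm{act}:Z(A)\to S$ followed by precomposition, giving $\phi.a=\phi\circ(x\mapsto a.x)=(x\mapsto \phi(a.x))$; these coincide precisely because $\phi\in\Hom_{A|B}(X,Y)$ is itself a bimodule map, so $\phi(a.x)=a.\phi(x)$. The statement for $Z(B)$ is the mirror computation with the right $B$-action, again using that $\phi$ is a bimodule map. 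I do not expect any real obstacle: every clause reduces either to associativity of composition or to the bimodule-map property of $f$ and of the elements of $\Hom_{A|B}$, and the one thing demanding care is consistent bookkeeping of which side each action and each composition sits on, together with the fact (inherited from the cospans above the lemma) that centrality of $Z(A)$ and $Z(B)$ is what makes the four $\mathrm{act}$ maps land among the bimodule endomorphisms in the first place.
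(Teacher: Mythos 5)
Your proof is correct and follows essentially the same route as the paper's: identify the $\Hom_{A|B}(Y,Y)$-$\Hom_{A|B}(X,X)$-bimodule structure on $\Hom_{A|B}(X,Y)$ by composition, verify the module-map conditions by associativity, reduce the two squares to $f$ being a bimodule map, and reduce the conditions \eqref{eq:2diagram-cond-2} to the elements of $\Hom_{A|B}(X,Y)$ being bimodule maps. Nothing is missing.
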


\begin{proof}
We need to verify the conditions in definition \ref{def:2-diagram+3-cell}.
The composition of bimodule maps turns $\Hom_{A|B}(X,Y)$ into a right module over $\Hom_{A|B}(X,X)$ and a left module over $\Hom_{A|B}(Y,Y)$. The map $h \mapsto f \circ h$ from $\Hom_{A|B}(X,X)$ to $\Hom_{A|B}(X,Y)$ is a right module map (this translates into $f \circ (h \circ h') = (f \circ h) \circ h'$). Similarly, $(-) \circ f$ is a left module map. Commutativity of the left square amounts to equality of the two maps $x \mapsto f(a.x)$ and $x \mapsto a.f(x)$ for all $a \in Z(A)$, which follows since $f$ is a bimodule map. That the right square commutes follows analogously. Finally, consider the two conditions in \eqref{eq:2diagram-cond-2}. The first condition amounts to equality of the two maps $x \mapsto g(x).b$ and $x \mapsto g(x.b)$ for all $b \in Z(B)$ and $g \in \Hom_{A|B}(X,Y)$, which holds since $g$ is a bimodule map. The second condition can be checked similarly.
\end{proof}

As the constructions will now get somewhat technical, let us just outline in the remark below how the discussion continues from here, leaving the details to \cite{in-prep}.

\begin{rem} \label{rem:centre-2nd-version}
(i) The 2-diagram in \eqref{eq:centre-2-diagram} provides a lax functor
\be
  \bfZ_{A,B} : \Alg(k)(A,B) \longrightarrow \Cosp(Z(A),Z(B)) \ .
\ee
This functor is indeed lax for the following reason: The vertical composition \eqref{eq:2-diag-vertical} of two 2-diagrams of the form \eqref{eq:centre-2-diagram} belonging to bimodule maps $f : X \to Y$ and $g : Y \to Z$ yields a 2-diagram with central term
\be \label{eq:Hom-x_Hom-Hom}
  \Hom_{A|B}(Y,Z) \otimes_H \Hom_{A|B}(X,Y)
  \qquad \text{where} ~~ H\equiv\Hom_{A|B}(Y,Y) \ .
\ee
This space is in general not isomorphic to $\Hom_{A|B}(X,Z)$. So we cannot obtain a functor $\Alg(k)(A,B) \longrightarrow \Cospu(Z(A),Z(B))$ in this way, and consequently not a -- lax or otherwise -- functor from $\Alg(k)$ to $\CALGu(k)$. However, we conjecture that the 2-diagram \eqref{eq:centre-2-diagram} does give rise to a lax functor $\bfZ$ from $\Alg(k)$ to the (conjectural) tricategory $\CALG(k)$.
\\[.3em]
(ii)
If the maps $f,g$ above are isomorphisms, the space \eqref{eq:Hom-x_Hom-Hom} is isomorphic to $\Hom_{A|B}(X,Z)$. In this way, we at least obtain a functor $\bfZ_{A,B} : \Alg(k)(A,B)^{1,0} \longrightarrow \Cospu(A,B)$ and with this also a lax functor
\be \label{eq:Z:Alg21-CALGu}
  \bfZ : \Alg(k)^{2,1} \longrightarrow \CALGu(k) \ .
\ee
(iii)
Denote by $\mathbf{F}$ the subcategory of $\Alg(k)$ consisting of Frobenius algebras with trace-pairing and finite-dimensional bimodules. One can show \cite{in-prep} that the restriction
\be \label{eq:bicat-groupoid-equiv}
  \bfZ : \mathbf{F}^{2,0} \longrightarrow \CALGu(k)^{2,0} \cong \CAlg(k)^{2,0} \cong \alg(k)^{1,0}_\mathrm{com}
\ee
is locally fully faithful. This has the interpretation that all isomorphisms of lattice TFTs without defects (i.e.\ isomorphisms of Frobenius algebras with trace pairing) are implemented by invertible domain walls (i.e.\ bimodules inducing Morita equivalences).
\end{rem}

\begin{rem}
(i) There is a close link between the lattice TFTs with defects and the centre functor just defined. Let
$T : \bord_{2,1}^\mathrm{def,top}(D_2,D_1,D_0) \to \vect_f(k)$ be a lattice TFT with defects as in theorem \ref{thm:lattice-defect-TFT}, and let $\bfD \equiv \bfD[D_2,D_1;T]$ be the 2-category of defect conditions defined in section \ref{sec:2-cat-from-defect-QFT}. Then we have the commuting square
\be \label{eq:lattice-TFT-commuting}
\raisebox{2em}{\xymatrix{
  \bfD^{2,1} \ar[d]^{\Delta} \ar[r]^{\bfE} & \CAlg(k) \ar[d]^{\bfI}
  \\
  \Alg(k)^{2,1} \ar[r]^{\bf Z} & \CALGu(k)
}}
\ee
where the functor $\Delta$ was given in \eqref{eq:Delta-B-Alg-def}, $\bfE$ in \eqref{eq:E-B-CAlg-def}, $\bfI$ in \eqref{eq:I-CAlg-CALG-def}, and $\bfZ$ in \eqref{eq:Z:Alg21-CALGu}. Indeed, evaluating the diagram on an invertible 2-morphism $f : \dwt x \to \dwt y$ for $\dwt x, \dwt y : a \to b$ gives for the upper path and lower path, in this order,
\be \small
\cospdd{Z(A)}{\mathrm{act}}{{\Hom_{A|B}(X,X)}{f \circ (-) \circ f^{-1}}}{\mathrm{act}}{Z(B)}{\mathrm{act}}{{\Hom_{A|B}(Y,Y)}{\id}}{\mathrm{act}}{\Hom_{A|B}(Y,Y)}
~~,~~
\cospdd{Z(A)}{\mathrm{act}}{{\Hom_{A|B}(X,X)}{f \circ (-)}}{\mathrm{act}}{Z(B)}{\mathrm{act}}{{\Hom_{A|B}(Y,Y)}{(-)\circ f}}{\mathrm{act}}{\Hom_{A|B}(X,Y)}
~~,
\ee
These are isomorphic 2-diagrams, and thus equal in $\CALGu(k)$. 
\\[.3em]
(ii) 
The commuting square \eqref{eq:lattice-TFT-commuting} shows that the lattice construction of defect TFTs is an implementation of the centre functor. Conjecturally, the restriction to invertible 2-morphisms can be dropped if one replaces both bicategories on the right hand side with the (equally conjectural) tricategory $\CALG(k)$.
\end{rem}

\subsection{Generalisation motivated by 2d conformal field theory}\label{sec:rCFT-def}

Rational conformal field theories can be build in two steps. In the first step one starts from a rational vertex operator algebra $V$ and finds its modules and the corresponding spaces of conformal blocks. The category $\mathcal{R}ep(V)$ of $V$-modules is a modular category in this case \cite{Huang:1994,Huang2005}.

The second step is combinatorial and consists of assigning a correlator to each world sheet, i.e.\ choosing a particular vector in the space of conformal blocks corresponding to the world sheet, such that the factorisation and locality constraints are satisfied. In the context of vertex operator algebras and for world sheets of genus zero and certain world sheets of genus one, such correlators were constructed in \cite{Huang:2005gz} -- see also the overview \cite{Kong:2009qw}.

The second step can also be solved elegantly for world sheets of arbitrary genus with the help of three-dimensional topological field theory -- provided one assumes that this 3d TFT correctly encodes the factorisation and monodromy properties of conformal blocks at arbitrary genus. The 3d TFT in question is  the Reshitikhin-Turaev 3d TFT obtained from the modular category $\mathcal{R}ep(V)$. This combinatorial construction of CFT correlators in terms of 3d TFT was carried out in \cite{tft1,tft5} -- see also the overview  \cite{Runkel:2005qw} -- and in particular allows for a description of CFT correlators for world sheets with defects \cite{tft1,Frohlich:2006ch}.

\medskip

Generalising the considerations in section \ref{sec:spaces+maps} from 2d TFT to 2d CFT suggests an interesting generalisation of the centre construction, which we now sketch. 

Let us start with the (bi)categories $\alg(k)$ and $\Alg(k)$. Instead of working with algebras and bimodules over a field $k$, that is, with algebras in the symmetric monoidal category of $k$-vector spaces, one considers algebras and bimodules in a general monoidal category $\mathcal{C}$ (in the CFT-context, this is the category $\mathcal{R}ep(V)$). In particular, we do not demand that $\mathcal{C}$ is symmetric or braided (though in the CFT context it is braided).

To generalise $\CAlg(k)$ and $\CALGu(k)$, we need to be able to talk about commutative algebras, so here we consider cospans of commutative algebras in a braided monoidal category. 

There is one major new ingredient when passing from vector spaces to more general categories, which is based on the following observation. For an algebra $A$ in a general monoidal category $\mathcal{C}$ it makes no sense to talk about its centre as a subalgebra commuting with the entire algebra, because the formulation of this condition needs a braiding. A natural candidate to take the role of the centre in the case of general monoidal categories is the so-called {\em full centre} $Z(A)$ of $A$ \cite{Fjelstad:2006aw,Davydov:2009}. This is a commutative algebra which lives in the monoidal centre $\mathcal{Z}(\mathcal{C})$ (see \cite{Joyal:1993}) of the category $\mathcal{C}$. Since $\mathcal{Z}(\mathcal{C})$ is braided, we can talk about commutative algebras there. If $\mathcal{C}$ is the category of $k$-vector spaces, one has the degenerate situation that $\mathcal{Z}(\mathcal{C}) \cong \mathcal{C}$, and so many of the subtleties of the centre-construction are not visible. (In the context of rational CFT, and for modular categories in general, one has $\mathcal{Z}(\mathcal{C}) = \mathcal{C} \boxtimes \bar{\mathcal{C}}$, see \cite[Thm.\,7.10]{Muger2001b}.)

The constructions and results of sections \ref{sec:comm-alg-v1}--\ref{sec:fun-cent-v2} all have analogues in the more general setting of algebras in monoidal categories. For example, an instance of the equivalence \eqref{eq:bicat-groupoid-equiv}, with the corresponding interpretation in terms of domain walls implementing equivalences of CFTs, has been found in \cite[Thm.\,3.14]{Davydov:2010rm}. More details will appear in \cite{in-prep}.

\begin{rem}
As an aside, let us recall an observation from \cite{Schweigert:2006af} which illustrates the usefulness of the 2-category of defect conditions defined in section \ref{sec:2-cat-from-defect-QFT} in the context of rational CFT. Namely, consider a fixed rational CFT (i.e.\ restrict your attention to only one world sheet phase) and consider only topological defects from this world sheet phase to itself, which in addition commute with the holomorphic and anti-holomorphic copy of the rational vertex operator algebra $V$. Then the 2-category $\bfD$ from section \ref{sec:2-cat-from-defect-QFT} has only one object (and so is a monoidal category). It turns out that $\bfD$ is Morita equivalent (in the sense of \cite[Def.\,4.2]{Muger2001a}) to $\mathcal{R}ep(V)$; this follows since $\bfD$ is monoidally equivalent to the category of $A$-$A$-bimodules in $\mathcal{R}ep(V)$ for an appropriate Frobenius algebra $A$ with trace pairing \cite[Sec.\,2]{Frohlich:2006ch}. It also follows (from \cite[Thm.\,3.3]{Schauenburg:2001}) that the monoidal centre $\mathcal Z(\bfD)$ is braided monoidally equivalent to the monoidal centre $\mathcal Z(\mathcal{R}ep(V)) = \mathcal{R}ep(V) \boxtimes \overline{\mathcal{R}ep(V)}$. Thus, quite remarkably, if one knows the one-object 2-category of chiral symmetry preserving topological defects in a rational CFT, one obtains for free the braided monoidal category of representations of its chiral symmetry $V \otimes \bar V$.
\end{rem}

\section{Outlook} \label{sec:outlook}

In this final section we would like to show some further directions that we find interesting and point out some open questions. From the perspective of this article, there are two evident problems which we left untouched:
\begin{enumerate}
\item 
In the introduction we claimed that there are two natural ways in which higher categories arise in field theory: by demanding that the functor defining the field theory assigns data to manifolds of codimension larger than one, or by working with defects of various dimensions. Clearly, one should study these two constructions in unison. We are aware of three works in this direction: one in 2d TFT \cite{Schommer-Pries:2009}, and two in arbitrary dimension -- \cite[Sec.\,4.3]{Lurie:2009aa} and \cite[Sec.\,6.7]{Morrison:2009a} -- both `extended down to points'. A better understanding of the relation between the two appearances of higher categories should allow one to make precise the idea that $n$-dimensional TFT extended down to points is in some sense dual to $n$-dimensional TFT which has defects in all dimensions. 
\item
A symmetric monoidal functor defining a 2d TFT or 2d CFT without defects has a well-known presentation in terms of generators and relations which provides a link with Frobenius algebras \cite{Sonoda:1988,Dijkgraaf:1989,Abrams:1996ty, tft1,Huang:2005gz}. This connection has been useful in the  construction of examples and in classification questions. For field theories with defects in 2d (let alone higher dimensions) such a generators and relations presentation is presently not known. Nonetheless, progress has been made in related questions: an algebraic description of 2d TFT with defects which extends down to points was presented in \cite{Schommer-Pries:2009}, and for planar algebras, generators are given in \cite{Kodiyalam:2004a} and a construction in terms of a 1-morphism in a pivotal strict 2-category is presented in \cite{Ghosh:2008a}. For 2d homotopy TFTs over spaces with at least one of $\pi_1$ or $\pi_2$ trivial, a classification in terms of Frobenius algebras with extra structure is given in \cite{Turaev:1999yf,Brightwell:1999a}.
\end{enumerate}
Apart from these two points, let us list some further miscellaneous points to complement the material presented in this note.

\medskip

\nxt One nice application of quantum field theories with topological defects is the orbifold construction. Here, one introduces a domain wall which implements the `averaging over the orbifold group', together with a selection of lower-dimensional junctions which allow one to glue these domain walls together. The orbifold theory is then defined in terms of a cell-decomposition of the original theory with the `averaging domain wall' placed on the codimension-1 cells. The advantage of this point of view is that the `averaging domain wall' need not actually be given by a sum over group-symmetries, giving rise to a generalisation of the orbifold construction. In the case of 2d rational CFTs, this is described in \cite{Frohlich:2009gb}. It is proved there that any two rational CFTs over the same left/right chiral symmetry algebra can be written as a generalised orbifold of one another. 

\medskip

\nxt In the application of field theories to questions in cohomology one considers field theories `over a space $X$', see e.g.\ \cite{Turaev:1999yf,Brightwell:1999a,Stolz:unpubl}. This means that objects and morphisms of the bordism category are in addition equipped with continuous maps to $X$. For each point $x \in X$, a field theory over $X$ gives a field theory for undecorated bordisms by choosing these continuous maps to be constant with value $x$. The role of $X$ is reminiscent of our $D_n$, the set labelling the top-dimensional domains $M_n$ for an $n$-dimensional field theory with defects. However, in our setting the $D_n$ label attached to a point in $M_n$ is locally constant and may change only across $M_{n-1}$, and each such change has to be accompanied by specifying a domain wall which mediates this change. It would be interesting to have a continuous formulation of the framework presented here to be able to incorporate continuously changing domain conditions via `smeared-out' domain walls and junctions.

\medskip

\nxt While the general setup in section \ref{sec:bord-with-def} allows for non-topological defects, in this note we only studied the topological case. Theories with non-topological defects are much harder to treat and are much less studied. We mention here four examples in 2d CFT:
\\[-1.5em]
\begin{itemize}
\itemsep 0.1em
\item[-] There are only two 2d CFTs in which all conformally invariant domain walls (this includes the topological ones) from the CFT to itself are known\footnote{
  Here `known' means that one has a list of defect operators satisfying a selection of consistency conditions. Conjecturally, this uniquely specifies all conformally invariant defects, at least in `semi-simple theories'. Only defects whose field content (the space $Q(O(x \circ x^*))$) has discrete $(L_0+\bar L_0)$-spectrum are considered.
}: 
the Lee-Yang model and the Ising model, see \cite{Oshikawa:1996dj,Quella:2006de}.
\item[-] 
In \cite{Quella:2006de}, a transmission coefficient was introduced which measures the `non-topo\-logicality' of a domain wall.
\item[-] 
In \cite{Bachas:2007td}, the fusion product of certain non-topological domain walls in the free boson CFT (found in \cite{Bachas:2001vj}) was computed, showing that at least in these theories the notion of fusion makes sense for non-topological domain walls despite the short-distance singularities.
\item[-] 
In the operator-algebraic approach of \cite{Bartels:2009ts}, non-topological domain walls are included from their start and also their fusion is defined. The definition is via Connes' fusion of bimodules and does not involve a short-distance limit.
\end{itemize}

\nxt The centre of an algebra can be interpreted as a `boundary-bulk map' in the following sense. When considering 2d TFT on surfaces with (unparametrised) boundaries, in addition to `closed states' associated to circles, there are `open states' associated to intervals. The open states form a non-commutative Frobenius algebra and the closed states form a commutative Frobenius algebra, which one can take to be the centre of $A$ (see e.g.\ \cite{Lazaroiu:2000rk,Moore:2006dw}). Thus, the centre defines a theory in one dimension higher (here in dimension two) for which the starting theory is a boundary theory (and the boundary is one-dimensional).
The construction in section \ref{sec:centre} can be understood as turning this boundary-bulk map into a functor. There are a number of situations in which such a boundary-bulk map occurs: In 2d rational CFT one finds that the boundary theory determines a unique bulk theory \cite{Fjelstad:2006aw}; algebraically this amounts to the construction of the full centre of an algebra in a monoidal category as briefly mentioned in section \ref{sec:rCFT-def}. In \cite{in-prep} we will show that the bulk-boundary map is functorial also in the rational CFT case. For certain two-dimensional quantum spin-lattices (which are three-dimensional models because of the time direction), edge excitations determine the bulk excitations, see \cite{Kitaev:2011a}. Finally, an analogous (but much more general) result exists for $\mathbb{E}[k]$-algebras (related to algebras over the little-discs operad in $k$-dimensions). Namely, in \cite{Lurie:2009b} a construction is presented which assigns to an $\mathbb{E}[k]$-algebra (in a symmetric monoidal $\infty$-category) its centre, which is an $\mathbb{E}[k{+}1]$-algebra in the same category, see \cite[Cor.\,2.5.13]{Lurie:2009b}.\footnote{IR would like to thank Owen Gwilliam for discussions on this point.}
It would be interesting to understand the precise relation to the constructions presented here.

\bigskip\noindent
{\bf Acknowledgements}: 
The authors would like to thank 
Nils Carqueville,
Jens Fjelstad,
J\"urgen Fuchs,
Andr\'e Henriques,
Chris Schommer-Pries,
and
especially Sebastian Novak
for helpful discussions and comments on a draft of this article. 
AD would like to thank Tsinghua University, and IR the Beijing International Center for Mathematical Research, for hospitality while part of this work was completed. 
LK is supported by the Basic Research Young Scholars Program of Tsinghua University, Tsinghua University independent research Grant No. 20101081762 and by NSFC Grant No.\ 20101301479. 
IR is supported in part by the SFB 676 `Particles, Strings and the Early Universe' of the DFG.

\appendix

\section{Appendix: Bicategories and lax functors} \label{app:bicategories}

In this appendix we recall the definition of bicategories and related notions, see \cite{Benabou:1967} or \cite{Gray:1974,Leinster:1998}.

\begin{defn} \label{def:bicat} 
A bicategory $\mathbf{S}$ consists of a set of objects (in a given universe) and a category of morphisms $\Mor(A,B)$ for each pair of objects $A$ and $B$ together with 
\begin{enumerate}
\item {\it identity morphism:} $\one_A:  \one \to \Mor(A,A)$ for all $A\in \mathbf{S}$, where $\one$ is a category with only one object and only the identity morphism. We will abbreviate $\one_A \equiv \one_A(\one) \in  \Mor(A,A)$,
\item {\it composition functor:} 
$$
\circledcirc_{C,B,A} \,:\, \Mor(B,C) \times \Mor(A, B) \,\longrightarrow \, \Mor(A, C) 
\quad , \quad
(T, S) \longmapsto T\circ S \ ,
$$
\item {\it associativity isomorphisms:} for $A, B, C, D\in \mathbf{S}$, there is a natural isomorphism between 
functors $\Mor(C,D) \times \Mor(B,C) \times \Mor(A,B) \to \Mor(A,D)$:
$$
\alpha \,:\,  \circledcirc_{D,B,A} \circ (\circledcirc_{D,C,B} \times \id) \,\longrightarrow \,
\circledcirc_{D,C,A} \circ (\id \times \circledcirc_{C,B,A})\ ,
$$
\item {\it left and right unit isomorphisms:}
for $A,B \in \mathbf{S}$ there are natural transformations between functors
$\one \times \Mor(A,B) \to \Mor(A,B)$ and $\Mor(A,B) \times \one \to \Mor(A,B)$:
$$
 l\,:\, \circledcirc_{B,B,A} \circ (\one_B \times \id )  \,\longrightarrow \, \id
 \quad , \quad
 r\,:\, \circledcirc_{B,A,A} \circ (\id \times \one_A)  \,\longrightarrow \,  \id \ ,
$$
\end{enumerate}
satisfying the following coherence conditions:
\begin{enumerate}
\item 
{\it associativity coherence:} 
\be  \label{diag:asso-bicat}
\xymatrix{
((S \circ T)\circ U) \circ V \ar[rr]^{\alpha(S,T,U) \circ \id_V}  
\ar[d]_{\alpha(S \circ T,U,V)}  & & (S\circ (T \circ U)) \circ V \ar[d]^{\alpha(S, T \circ U, V)} \\
(S\circ T) \circ (U\circ V) \ar[rd]_(.4){\alpha(S,T,U \circ V)~~~~} &  & S\circ ((T\circ U) \circ V) \ar[ld]^(.4){~~~~\id_S \circ \alpha(T,U,V)} \\
& S\circ (T \circ (U\circ V)) &  
} 
\ee
\item 
{\it identity coherence:} 
\be  \label{diag:triangle-bicat}
\xymatrix{
(S \circ \one_B) \circ T  \ar[rd]_{r(S) \circ \id_T} \ar[rr]^{\alpha(S,\one_B, T)} & & S\circ (\one_B \circ T) \ar[ld]^{~~\id_S \circ l(T)}  \\
& S\circ T & 
}
\ee
\end{enumerate}
\end{defn}

\begin{defn} \label{def:lax-functor}
Let $\mathbf{C}$ and $\bfD$ be two bicategories. A lax functor $\mathbf{F}: \mathbf{C} \rightarrow \bfD$ is a quadruple $\mathbf{F}=(F, \{\mathbf{F}_{(A,B)} \}_{A,B\in \mathbf{C}}, i, m)$ where 
\begin{enumerate}
\item $F$ is a map of objects $X \mapsto F(X)$ for each object $X$ in $\mathbf{C}$,
\item $\mathbf{F}_{(A,B)}: \Mor_\mathbf{C}(A, B)\to \Mor_\bfD(F(A), F(B))$ is a functor for each pair of objects $A,B\in \mathbf{C}$,
\item {\it unit transformation:} natural transformations $i_A:  \one_{F(A)} \to \mathbf{F}_{(A,A)} \circ \one_A$ between two functors $\one \to \Mor_\bfD(F(A), F(A))$ for all $A$,
\item {\it multiplication transformation:} 
$m: \circledcirc_\bfD \circ (\mathbf{F}_{(B,C)} \times \mathbf{F}_{(A,B)}) \to \mathbf{F}_{(A,C)} \circ \circledcirc_\mathbf{C}$, i.e.\ a collection of morphisms $m_{S,T}: \mathbf{F}_{(B,C)}(S) \circ \mathbf{F}_{(A,B)}(T) \to \mathbf{F}_{(A,C)}(S \circ T)$ natural in $S \in \Mor_\mathbf{C}(B,C), T\in \Mor_\mathbf{C}(A, B)$,
\end{enumerate}
satisfying the following commutative diagrams: 
\begin{enumerate}
\item {\it associativity:} for $S \in \Mor_\mathbf{C}(C,D), T\in \Mor_\mathbf{C}(B,C), U\in \Mor_\mathbf{C}(A,B)$, 
$$
\xymatrix{
(\mathbf{F}_{(C,D)}(S) \circ \mathbf{F}_{(B,C)}(T)) \circ \mathbf{F}_{(A,B)}(U) \ar[r]^{\alpha_{\mathbf{D}}} \ar[d]_{m \circ \id} 
& \mathbf{F}_{(C,D)}(S) \circ (\mathbf{F}_{(B,C)}(T) \circ \mathbf{F}_{(A,B)}(U)) \ar[d]^{\id \circ m} \\
\mathbf{F}_{(B,D)}(S\circ T) \circ \mathbf{F}_{(A,B)}(U) \ar[d]_m & \mathbf{F}_{(C,D)}(S) \circ \mathbf{F}_{(A,C)}(T\circ U) \ar[d]^m\\
\mathbf{F}_{(A,D)}((S\circ T) \circ U) \ar[r]^{\mathbf{F}_{(A,D)}(\alpha_{\mathbf{C}})} & \mathbf{F}_{(A,D)}(S\circ (T\circ U)) \, ,
}
$$
\item {\it unit properties:} for $S\in \Mor_\mathbf{C}(A, B)$, 
$$
\xymatrix{
\one_{F(B)} \circ \mathbf{F}_{(A,B)}(S) \ar[r]^{\hspace{0.5cm}l(F(S))} \ar[d]_{i_B\circ \id}  & \mathbf{F}_{(A,B)}(S) \\
\mathbf{F}_{(B,B)}(\one_B) \circ \mathbf{F}_{(A,B)}(S) \ar[r]^(.55)m & \mathbf{F}_{(A,B)}(\one_B \circ S)\, , \ar[u]_{\mathbf{F}_{(A,B)}(l(S))} 
}
$$
$$
\xymatrix{
\mathbf{F}_{(A,B)}(S) \circ \one_{F(A)} \ar[r]^{\hspace{0.5cm}r(F(S))} \ar[d]_{\id \circ i_B}  & \mathbf{F}_{(A,B)}(S) \\
 \mathbf{F}_{(A,B)}(S) \circ \mathbf{F}_{(A,A)}(\one_A)  \ar[r]^(.55)m & \mathbf{F}_{(A,B)}(S\circ \one_A)\, . \ar[u]_{\mathbf{F}_{(A,B)}(r(S))} 
}
$$
\end{enumerate}
\end{defn}

If we reverse all arrows, we obtain the notion of {\it oplax functor}. Given a lax functor $\mathbf{F}$, if the natural transformations $i$ and $m$ are actually isomorphisms, then $\mathbf{F}$ is called a functor. 

Let $P$ be a property of a functor between 1-categories like full, faithful, essentially surjective, etc. We say that a (lax, oplax or neither) functor is {\em locally} $P$, if for all objects $A,B$ the functors $\mathbf{F}_{(A,B)}$ have property $P$. 

\newcommand\arxiv[2]      {\href{http://arXiv.org/abs/#1}{#2}}
\newcommand\doi[2]        {\href{http://dx.doi.org/#1}{#2}}
\newcommand\httpurl[2]    {\href{http://#1}{#2}}

\end{document}